\newcommand{\R}{\mathbb R}
\newcommand{\N}{\mathbb N}
\newcommand{\Qv}{\mathbb Q}
\newcommand{\Pv}{\mathbb P}
\newcommand{\E}{\mathbb E}
\newcommand{\gt}{\gamma_{t}}
\newcommand{\gs}{\gamma_{s}}
\newcommand{\g}{\gamma_{\cdot}}
 \renewcommand{\headrulewidth}{0pt}
 \renewcommand{\footrulewidth}{0.5pt}
 \definecolor{myaqua}{rgb}{0.0,0.5,0.55}
 \definecolor{lightaqua}{rgb}{0.75,0.95,0.95}
\newtheorem{theorem}{Theorem}
\newtheorem{prop}{Proposition}
\newtheorem{lem}{Lemma}
\newtheorem{coro}{Corollary}
\newtheorem{defn}{Definition}[section]
\newtheorem{rem}{Remark}[section]
\def\lin#1#2{\textcolor[rgb]{0.6,0.6,0.6}{\vspace*{#1mm} \hrule
   height 3 pt \vspace*{#2mm}}}
\def\bt{\begin{tabular}}
\def\et{\end{tabular}}
\def\and{\mbox{ and }}
\def\E{\mbox{\bf E}}
\def\P{\mbox{\bf P}}
\def\1{{\bf 1}}
 \def\boxx#1#2#3#4#5{
 {\linethickness{#4pt}\put(#1,#5){\color{myaqua}{\line(1,0){#3}}}}
 \multiput(#1,#2)(0,#4){2}{\line(1,0){#3}}
 \multiput(#1,#2)(#3,0){2}{\line(0,1){#4}}
  }
\begin{document}

 $\mbox{ }$

 \vskip 12mm

{ 
{\noindent{\Large\bf\color{myaqua}
 Resolution of the skew Brownian motion  equations with stochastic calculus for signed measures}} 
\\[6mm]
{\bf Fulgence EYI OBIANG }}
\\[2mm]
{ 
URMI Laboratory, Département de Mathématiques et Informatique, Faculté des Sciences, Université des Sciences et Techniques de Masuku,  Franceville, Gabon 
\\
Email: \href{mailto:feyiobiang@yahoo.fr}{\color{blue}{\underline{\smash{feyiobiang@yahoo.fr}}}}\\[1mm]
\lin{5}{7}

 {  
 {\noindent{\large\bf\color{myaqua} Abstract}{\bf \\[3mm]
 \textup{
Contributions of the present paper consist of two parts. In the first one, we contribute to the theory of stochastic calculus for signed measures. For instance, we provide some results permitting to characterize martingales and Brownian motion both defined under a signed measure. We also prove that the uniformly integrable martingales (defined with respect to a signed measure) can be expressed as relative martingales and we provide some new results to the study of the class $\Sigma(H)$ which appeared for the first time in \cite{f} and studied in \cite{f,e,o}. The second part is devoted to the construction of solutions for the \textbf{homogeneous skew Brownian motion equation} and for the \textbf{inhomogeneous skew Brownian motion equation}. To do this, our ingredients are the techniques and results developed in the first part that we apply on some stochastic processes borrowed from the theory of stochastic calculus for signed measures. Our methods are inspired by those used by Bouhadou and Ouknine in \cite{siam}. Moreover, their solution of the inhomogeneous skew Brownian motion equation is a particular case of those we propose in this paper.
 }}} 
 \\[4mm]
 {\noindent{\large\bf\color{myaqua} Keywords}{\bf \\[3mm]
 Stochastic calculus for signed measures; Skew Brownian motion; class $\Sigma(H)$; relative martingales; honest time; zeros of continuous martingales 
}}}\\[4mm]{\noindent{\large\bf\color{myaqua} MSC: }{\color{blue} 60G07; 60G20; 60G46; 60G48;60H10; 60J60}}
\lin{3}{1}

\renewcommand{\headrulewidth}{0.5pt}
\renewcommand{\footrulewidth}{0pt}

 \pagestyle{fancy}
 \fancyfoot{}
 \fancyhead{} 
 \fancyhf{}
 \fancyhead[RO]{\leavevmode \put(-100,0){\color{myaqua}Fulgence EYI OBIANG} \boxx{15}{-10}{10}{50}{15} }
 \fancyfoot[C]{\leavevmode
 \put(-2.5,-3){\color{myaqua}\thepage}}

 \renewcommand{\headrule}{\hbox to\headwidth{\color{myaqua}\leaders\hrule height \headrulewidth\hfill}}
\section*{Introduction}

  The study of stochastic calculus when the measure space is governed, not by a probability measure, but by a general measure that can take positive and negative values (signed measure) is called \textbf{stochastic calculus for signed measures}. It is a relatively recent theory. The first publications on it go back to 1984 (Ruize de Chavez, \cite{chav}) and 2003 (Beghdadi-Sakrani, \cite{sak}). It stems from the suggestion of P.A Meyer who proposed to generalize in the field of signed measures, Paul Lévy's theorem which characterizes the measure of Wiener on $\Omega=\mathcal{C}_{0}(\R_{+},\R)$ as the unique probability under which $X$ and $(X^{2}_{t}-t:t\geq0)$ are martingales, where $X$ is the canonical process on $\Omega$. The bases of this theory have been implemented in the two above mentioned papers. For example, martingale theory for signed measures is developed and some classical results of the usual stochastic calculus were generalized. One can quote for instance Theorem of Itô and Theorem of Girsanov or Theorem of Paul Lévy. Recently, the theory of Stochastic Calculus  for Signed Measures has registered new contributions in a series of three publications of Eyi Obiang (the first two of which are in collaboration with Ouknine and Moutsinga \cite{f,e}, the third with Ouknine, Moutsinga and Trutnau \cite{o}). Globally, these three papers are devoted to the development of a framework and techniques permitting to study  stochastic processes of two new classes of stochastic processes namely, class $\Sigma_{s}(H)$ and class $\Sigma(H)$ appeared for the first time in \cite{f} .

The present paper aims two main objectives. The first is to bring substantial contributions to develop the theory of stochastic calculus for signed measures. For this purpose, we contribute to the theory of martingales  by providing  new  results permitting to characterize stochastic processes that Ruiz de Chavez designates as martingales and Brownian motions  under a signed measure. Some corollaries of these results prove that we can construct martingales and Brownian motions (In the sense of the signed measures and in the sense of probabilities). We show that some uniformly integrable martingales with respect to a signed measure are in fact relative martingales. That is, we can write them as
$$M_{t}=E\left[M_{\infty}1_{\{g<t\}}|\mathcal{F}_{t}\right]$$
where $g$ is an honest time enjoying a capital role in stochastic calculus for signed measures. We contribute also to the study of the class $\Sigma(H)$ by giving new characterization results and new interesting properties.

The second main aim of this manuscript is devoted to the construction of solutions for the homogeneous skew Brownian Motion and for the inhomogeneous skew Brownian motion. That is, we construct solutions for  the two following equations
\begin{equation}\label{hsm}
	X_{t}=x+B_{t}+(2\alpha-1)L_{t}^{0}(X)
\end{equation}
where $B$ is a standard Brownian motion, $\alpha\in(0,1)$ is a skewness parameter, $x\in\R$ and $L_{t}^{0}(X)$ stands for the symmetric local time at $0$. And
\begin{equation}\label{ism}
	X^{\alpha}_{t}=x+B_{t}+\int_{0}^{t}{(2\alpha(s)-1)dL_{s}^{0}(X^{\alpha})}
\end{equation}
where $B$ is a standard Brownian motion, $x\in\R$, $\alpha:\R_{+}\rightarrow\R$ is a Borel function and $L_{t}^{0}(X^{\alpha})$ stands for the symmetric local time at $0$ of the unknown process $X^{\alpha}$.

A strong solution of \eqref{hsm} is called skew Brownian motion. It appeared in the seminal work \cite{11} of Itô and McKean as a natural generalization of the Brownian motion. It is a process that behaves like a Brownian motion except that the sign of each excursion is chosen using an independent Bernoulli random variable of parameter $\alpha$. This equation has been studied extensively and has many extensions in the literature. For instance, we can quote Harrison and Shepp \cite{10}, LeGall \cite{12}, Ouknine \cite{16} and Walsh \cite{21}. The inhomogeneous skew Brownian motion (Equation \eqref{ism}) is one of these extensions. It was introduced by Weinryb \cite{23} and recently studied by Etoré and Martinez in \cite{9} and by Bouhadou and Ouknine in \cite{siam}.

The paper is organized as follows:
\begin{itemize}
	\item In section \ref{s1}, we first present notations and definitions useful in this paper. We also recall some results on enlargement of filtration and balayage formula we use throughout this work.
	\item In Section, \ref{s2}, we contribute to the theory of martingales for signed measures and to the study of Brownian motion defined with respect to a signed measure.
	\item In Section \ref{s4}, we give new properties and new characterization results for stochastic processes of class $\Sigma(H)$.
	\item The section \ref{s5} is dedicated to the construction of solutions of skew Brownian motion equations from stochastic processes borrowed from the theory of stochastic calculus for signed measures.
\end{itemize}

\section{Preliminaries}\label{s1}

The present section is reserved to the presentation of  some preliminaries on which the theory set out in this paper is based.

\subsection{Notations}

We start by giving some notations which will be used throughout this paper. Consider a measure space $(\Omega, \mathcal{F}_{\infty}, \Qv)$, where $\Qv$ is a bounded signed measure. Let $\P$ be a probability measure on $\mathcal{F}_{\infty}$ such that $\Qv\ll\P$ and $(\mathcal{F}_{t})_{t\geq0}$ be a right continuous filtration completed with respect to $\P$ such that $\mathcal{F}_{\infty}=\vee_{t}{\mathcal{F}_{t}}$. We shall use the following notations:
\begin{itemize}
	\item $D_{t}=\frac{d\Qv}{d\P}|_{\mathcal{F}_{t}}$ where $(\mathcal{F})_{t\geq0}$ is a right continuous filtration completed with respect to $\P$ such that $\mathcal{F}_{\infty}=\vee_{t}\mathcal{F}_{t}$. Note that $D$ is a uniformly integrable $\P$- martingale (see Beghdadi-Sakrani \cite{sak}).
	\item $H=\{t: D_{t}=0\}$;
	\item $g=\sup{H}$; $\overline{g}=0\vee g$ and $\gamma_{t}=\sup\{s\leq t: D_{s}=0\}$;
	\item Remark that $g$ and $\overline{g}$ are not stopping times but honest times. The smallest right continuous filtration containing $(\mathcal{F}_{t})$ for which $\overline{g}$ is a stopping time will be denoted by $(\mathcal{G}_{t})$.
 Then, the filtration $(\mathcal{G}_{\overline{g}+t})$ is well defined.
	\item $\P^{'}=\frac{|D_{\infty}|}{\E(|D_{\infty}|)}\P$.
\end{itemize}

In this work, we shall assume that $D$ is a continuous process which satisfies $\P(D_{\infty}=0)=0$ (i.e $\overline{g}<\infty$ almost surely).

We close this subsection by defining two stochastic processes which are very important for the present work. For any continuous semi-martingale $Y$, the set $\mathcal{W}=\{t\geq0; Y_{t}=0\}$ cannot be ordered. However, the set $\R_{+}\setminus\mathcal{W}$ can be decomposed as a countable union $\cup_{n\N}{J_{n}}$ of intervals $J_{n}$. Each interval $J_{n}$ corresponds to some excursion of $Y$. That is if $J_{n}=]g_{n},d_{n}[$, $Y_{t}\neq0$ for all $t\in]g_{n},d_{n}[$ and $Y_{g_{n}}=Y_{d_{n}}=0$. At each $J_{n}$ we associate a Bernoulli random variable $\zeta_{n}$ which is independent from any other random variables and such that 
$$P(\zeta_{n}=1)=\alpha\text{ and }P(\zeta_{n}=-1)=1-\alpha.$$
Now, let us define the process $Z^{\alpha}$ we use throughout this paper.
\begin{equation}\label{zalpha}
	Z^{\alpha}_{t}=\sum_{n=0}^{+\infty}{\zeta_{n}1_{]g_{n},d_{n}[}(t)}.
\end{equation}

If we consider that $\alpha$ is a piecewise constant function associated with a partition $(0=t_{0}<t_{1}<\cdots<t_{n-1}<t_{m})$. That is, $\alpha$ is of the form:
$$\alpha(t)=\sum_{i=0}^{m}{\alpha_{i}1_{[t_{i},t_{i+1})}(t)},$$
where $\alpha_{i}\in[0,1]$ for all $i=0,1,\cdots,n$. In this case, we shall consider the following process
\begin{equation}\label{Zalpha}
	\mathcal{Z}^{\alpha}_{t}=\sum_{n=0}^{+\infty}{\sum_{i=0}^{m}{\zeta^{i}_{n}1_{]g_{n},d_{n}[\cap[t-{i},t_{i+1})}(t)}},
\end{equation}
where $(\zeta^{i}_{n})_{n\geq0}$, $i=1,2,\cdots,m$ be $m$ independent sequences of independent variables such that
$$\P(\zeta^{i}_{n}=1)=\alpha_{i}\text{ and }\P(\zeta^{i}_{n}=-1)=1-\alpha_{i}.$$
\subsection{Enlargement of filtrations}

Now, we shall recall some results of the theory of enlargement of filtrations which are mainly  useful in Section \ref{s22} of the current work.

\begin{defn}[\textbf{Azéma and Yor\cite{1}}]
Let $\mathcal{H}$ be a random  optional closed set. We call $\mathcal{R}(\mathcal{H})$ the class of processes $(X_{t};t\geq0)$ vanishing on $\mathcal{H}$ and admitting a decomposition of the form
$$X_{t}=M_{t}+V_{t},$$
where $(M_{t};t\geq0)$ is a right continuous uniformly integrable martingale, $(V_{t};t\geq0)$ is a continuous and adapted variation integrable process such that $dV_{t}$ is carried by $\mathcal{H}$.
\end{defn}

\begin{prop}[\textbf{Azéma and Yor\cite{1}}]\label{rho}

Let $\Gamma$ be an honest time with respect to the filtration $(\mathcal{F}_{t})_{t\geq0}$ and a closed optional set $\mathcal{H}$ such that $\Gamma=\sup{(\mathcal{H})}$. We represent by $(\mathcal{G}_{t})_{t\geq0}$, the progressive enlargement of the filtration $(\mathcal{F}_{t})_{t\geq0}$ with respect to $\Gamma$. Let $(V_{t})_{t\geq0}$ be a $(\mathcal{G}_{\Gamma+t})_{t\geq0}-$   optional process. There exists a unique $(\mathcal{F}_{t})_{t\geq0}-$   optional process $(U_{t})_{t\geq0}$  which vanishes on $\mathcal{H}$ such that $\forall t\geq0$, $U_{\Gamma+t}=V_{t}$ and $U_{0}=V_{0}$ on $\{\Gamma=0\}$. That defines a function
$\rho:V\longmapsto U$. $\rho$ is linear, non-negative and preserves products.
\end{prop}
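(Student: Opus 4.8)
The core of the statement is a transfer principle identifying, modulo evanescence, the $(\mathcal{F}_{t})$-optional processes that vanish on $\mathcal{H}$ with the $(\mathcal{G}_{\Gamma+t})$-optional processes; once this correspondence is in place, the linearity, positivity and multiplicativity of $\rho$ will follow formally from the uniqueness part. The plan is to exhibit the two maps between these classes and to check that they are mutually inverse.

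In one direction, for an $(\mathcal{F}_{t})$-optional $U$ vanishing on $\mathcal{H}$ put $\Lambda(U)_{t}:=U_{\Gamma+t}\mathbf{1}_{\{\Gamma<\infty\}}$, with $\Lambda(U)_{0}:=U_{0}$ on $\{\Gamma=0\}$. Since $\Gamma$ is by construction a $(\mathcal{G}_{t})$-stopping time, composing an $(\mathcal{F}_{t})$-optional process with the shift $\theta_{\Gamma}\colon(\omega,t)\mapsto(\omega,\Gamma(\omega)+t)$ yields a $(\mathcal{G}_{\Gamma+t})$-optional process; a convenient way to see this is a functional monotone class argument over the generators $\xi\,\mathbf{1}_{\mathcal{H}^{c}}\,\mathbf{1}_{[[S,\infty[[}$, with $S$ an $(\mathcal{F}_{t})$-stopping time and $\xi\in\mathcal{F}_{S}$, for which $\Lambda(U)_{t}$ reduces, away from $t=0$, to $\xi\,\mathbf{1}_{\{S\le\Gamma+t\}}=\xi\,\mathbf{1}_{\{(S-\Gamma)^{+}\le t\}}$ on $\{\Gamma<\infty\}$, where $(S-\Gamma)^{+}$ is a $(\mathcal{G}_{\Gamma+t})$-stopping time and $\xi\,\mathbf{1}_{\{S<\infty\}}\in\mathcal{F}_{S}\subseteq\mathcal{G}_{\Gamma+(S-\Gamma)^{+}}$. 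Thus $\Lambda$ takes $(\mathcal{F}_{t})$-optional processes vanishing on $\mathcal{H}$ to $(\mathcal{G}_{\Gamma+t})$-optional ones; $\rho$ is to be its inverse.

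For the reverse map, observe that $\theta_{\Gamma}$ carries $\{\Gamma<\infty\}\times\R_{+}$ onto the stochastic interval $[[\Gamma,\infty[[$, and the decisive point is that, because $\Gamma$ is an \emph{honest} time, $\theta_{\Gamma}$ in fact identifies the $(\mathcal{G}_{\Gamma+t})$-optional $\sigma$-field with the trace on $[[\Gamma,\infty[[$ of the $(\mathcal{F}_{t})$-optional $\sigma$-field; this is the ``post-$\Gamma$'' half of the Jeulin--Yor decomposition of $(\mathcal{G}_{t})$-optional processes along the honest time $\Gamma$, and over a general enlargement one would only obtain the trace of the $(\mathcal{G}_{t})$-optional $\sigma$-field. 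Granting this, to a $(\mathcal{G}_{\Gamma+t})$-optional $V$ there corresponds a unique $(\mathcal{F}_{t})$-optional process carried by $[[\Gamma,\infty[[$ whose $\theta_{\Gamma}$-pull-back is $V$; extending it by $0$ off $[[\Gamma,\infty[[$ yields $U=\rho(V)$, which is $(\mathcal{F}_{t})$-optional, is null on $[[0,\Gamma[[$ and hence vanishes on $\mathcal{H}$ (as $\Gamma=\sup\mathcal{H}$), and satisfies $U_{\Gamma+t}=V_{t}$, $U_{0}=V_{0}$ on $\{\Gamma=0\}$. Uniqueness within the stated class is then the optional section theorem: the difference of two candidates is $(\mathcal{F}_{t})$-optional, vanishes on $\mathcal{H}\cup[[\Gamma,\infty[[$, and, singled out among such by being null on $[[0,\Gamma[[$ like $\rho(V)$, is evanescent.

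The algebraic properties drop out of uniqueness. For linearity, $\rho(aV+bV')$ and $a\rho(V)+b\rho(V')$ are both $(\mathcal{F}_{t})$-optional, vanish on $\mathcal{H}$, have post-$\Gamma$ value $aV+bV'$ and agree at $0$ on $\{\Gamma=0\}$, hence coincide; the same reasoning gives $\rho(V)\rho(V')=\rho(VV')$ (a product of two processes vanishing on $\mathcal{H}$ still vanishes on $\mathcal{H}$, and $(\rho(V)\rho(V'))_{\Gamma+t}=V_{t}V'_{t}$), and $\rho(V)\ge0$ whenever $V\ge0$ since $\rho(V)$ is the $\theta_{\Gamma}$-pull-back of $V$ on $[[\Gamma,\infty[[$ and $0$ off it. I expect the genuine obstacle to be the identification of $\sigma$-fields invoked in the third paragraph --- equivalently, the surjectivity of $\Lambda$ and the existence half of $\rho$ --- which is exactly where the honest-time structure of $\Gamma$ and the associated decomposition of $(\mathcal{G}_{t})$-optional processes are indispensable; the well-definedness of $\Lambda$ and the algebra of $\rho$ are comparatively routine.
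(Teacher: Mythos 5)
The paper offers no proof of this proposition: it is imported verbatim as a preliminary and cited to Az\'ema--Yor \cite{1}, so there is no in-paper argument to compare yours against. Judged on its own terms, your existence half is structured correctly: you rightly identify that everything hinges on the fact that, for an \emph{honest} time $\Gamma$, the $(\mathcal{G}_{\Gamma+t})$-optional $\sigma$-field is the image under the shift of the trace of the $(\mathcal{F}_{t})$-optional $\sigma$-field on $[[\Gamma,\infty[[$, and your monotone-class computation for the easy direction $\Lambda$ is fine. But since that identification \emph{is} the content of the proposition, and you cite it rather than prove it, the existence part of your argument is essentially a reduction of the statement to itself; a complete proof would have to reproduce the Jeulin--Yor analysis of $(\mathcal{G}_{t})$-optional processes after an honest time.

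The genuine gap is in the uniqueness step, and it is not merely cosmetic. In the class actually specified by the statement --- $(\mathcal{F}_{t})$-optional processes vanishing on $\mathcal{H}$ with prescribed values on $[[\Gamma,\infty[[$ --- uniqueness fails: take $\mathcal{H}=\Omega\times\{0,1\}$, so $\Gamma\equiv 1$ is a (constant, hence honest) time and $\mathcal{G}=\mathcal{F}$; for $V\equiv 0$ both $U\equiv 0$ and $U_{t}=\sin^{2}(\pi t)\mathbf{1}_{[0,1]}(t)$ are optional, vanish on $\mathcal{H}$, and satisfy $U_{\Gamma+t}=0$. Your section-theorem argument only shows that the difference of two candidates vanishes on $\mathcal{H}\cup[[\Gamma,\infty[[$, which need not be all of $\Omega\times\R_{+}$; the phrase ``singled out among such by being null on $[[0,\Gamma[[$'' then smuggles in an extra hypothesis that is not part of the stated uniqueness class, so the argument is circular as written. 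The correct fix is to state uniqueness within the class of optional processes carried by $[[\Gamma,\infty[[$ (where it is trivial), or to keep track of the precise class used by Az\'ema--Yor. This is worth flagging because the paper later invokes the uniqueness clause (in the proof of Theorem \ref{opt}) for processes such as $M_{t}-M_{\gamma_{t}}$ and $E[M_{\infty}\mathbf{1}_{\{\overline{g}<t\}}|\mathcal{F}_{t}]$, which vanish on $\mathcal{H}$ but are not null on $[[0,\Gamma[[$, so the exact form of the uniqueness class is load-bearing. Your derivation of linearity, positivity and multiplicativity from uniqueness is fine once the class is corrected, since all the processes you compare are constructed to be carried by $[[\Gamma,\infty[[$.
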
       

\begin{theorem}\label{qot}[\textbf{Quotient theorem: Theorem 3.2 of Azéma and Yor \cite{1}}]
Let $Y$ be a non-negative  right continuous process such that $Y\in\mathcal{R}(\mathcal{H})$ with $\mathcal{H}=\{t\geq 0:Y_{t}=0\}$. Denote $\Gamma=\sup{(\mathcal{H})}$.
\begin{enumerate}
	\item If $(X_{t};t\geq0)$ is a stochastic process of the class $\mathcal{R}(\mathcal{H})$, then, the process $(\chi_{t};t>0)$ defined by 
$$\chi_{t}=\frac{X_{\Gamma+t}}{Y_{\Gamma+t}}$$
is a $\left(Q, (\mathcal{G}_{\Gamma+t}){t>0}\right)$ uniformly integrable martingale.
\item Reciprocally, let $(\chi_{t};t>0)$ a $\left(Q, (\mathcal{G}_{\Gamma+t}){t>0}\right)$ uniformly integrable martingale; the stochastic process $X=(Y_{t}\rho(\chi_{\cdot})_{t};t\geq0)$ is the unique process of $\mathcal{R}(H)$ such that
$$\chi_{t}=\frac{X_{\Gamma+t}}{Y_{\Gamma+t}}$$
for all $t>0$.
\end{enumerate}
Where, $Q=\dfrac{Y_{\infty}}{E(Y_{\infty})}\P$.
\end{theorem}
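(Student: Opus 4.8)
The plan is to treat the two assertions as inverse to one another, using the $\rho$-transform of Proposition~\ref{rho} as the dictionary between the post-$\Gamma$ quotient $\chi$ and the global process $X\in\mathcal R(\mathcal H)$. Note first that, since $\mathcal H$ is closed and $\Gamma=\sup\mathcal H$, on $\{\Gamma<\infty\}$ we have $\Gamma\in\mathcal H$ while $\Gamma+t\notin\mathcal H$ for every $t>0$; hence $Y_{\Gamma+t}\neq 0$ for $t>0$, the quotient $\chi_t=X_{\Gamma+t}/Y_{\Gamma+t}$ makes sense on $(0,\infty)$, and the only real degeneracy is the indeterminate $0/0$ at $t=0$ --- which is exactly why the statement is phrased on $(0,\infty)$ and why $\rho$ is needed to recover the value at the origin. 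I will also use repeatedly that if $W\in\mathcal R(\mathcal H)$, written $W=M^{W}+V^{W}$ with $M^{W}$ a right-continuous uniformly integrable martingale and $V^{W}$ continuous of integrable variation with $dV^{W}$ carried by $\mathcal H$, then $V^{W}$ is constant on $(\Gamma,\infty)$ and $W_\Gamma=0$, so that $W_{\Gamma+t}=M^{W}_{\Gamma+t}-M^{W}_\Gamma$ for all $t\geq 0$ and $W_\infty=M^{W}_\infty-M^{W}_\Gamma$.

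For the first assertion, apply this to $X=M+V$ and to $Y=N+A$. The key point is the identity
\[
Y_{\Gamma+t}\,E\!\left[X_\infty\mid\mathcal G_{\Gamma+t}\right]=X_{\Gamma+t}\,E\!\left[Y_\infty\mid\mathcal G_{\Gamma+t}\right]\qquad(t>0),
\]
which I would derive from the classical formula describing $(\mathcal G_{\Gamma+t})$-conditional expectations in the progressive enlargement of $(\mathcal F_t)$ along the honest time $\Gamma$, in terms of the Az\'ema supermartingale of $\Gamma$: after $\Gamma$ the map $\xi\mapsto E[\xi\mid\mathcal G_{\Gamma+t}]$ multiplies $X_{\Gamma+t}$, resp.\ $Y_{\Gamma+t}$, by one and the same factor, the $\mathcal R(\mathcal H)$-structure (vanishing on $\mathcal H$, finite-variation part carried by $\mathcal H$) being what forces this common factor. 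Granting the identity, Bayes' rule for $Q=\dfrac{Y_\infty}{E(Y_\infty)}\P$ --- whose $(\mathcal G_{\Gamma+t})$-density is $L_t=E[Y_\infty\mid\mathcal G_{\Gamma+t}]/E(Y_\infty)$ --- gives
\[
\chi_t L_t=\frac{X_{\Gamma+t}}{Y_{\Gamma+t}}\cdot\frac{E[Y_\infty\mid\mathcal G_{\Gamma+t}]}{E(Y_\infty)}=\frac{E[X_\infty\mid\mathcal G_{\Gamma+t}]}{E(Y_\infty)},
\]
which is a $\P$-$(\mathcal G_{\Gamma+t})_{t>0}$-martingale; hence $\chi$ is a $Q$-$(\mathcal G_{\Gamma+t})_{t>0}$-martingale, and it is uniformly integrable since $\chi_\infty:=X_\infty/Y_\infty\in L^1(Q)$ (because $X_\infty\in L^1(\P)$) and $\chi_t=E_Q[\chi_\infty\mid\mathcal G_{\Gamma+t}]$ by the same computation.

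For the converse, set $U:=\rho(\chi_\cdot)$ --- extending $\chi$ to $t=0$ by its right-hand limit, which exists by uniform integrability --- and $X:=YU$. By Proposition~\ref{rho}, $U$ is $(\mathcal F_t)$-optional, vanishes on $\mathcal H$ and satisfies $U_{\Gamma+t}=\chi_t$, so $X$ vanishes on $\mathcal H$ and $X_{\Gamma+t}=Y_{\Gamma+t}\chi_t$, i.e.\ $\chi_t=X_{\Gamma+t}/Y_{\Gamma+t}$ for $t>0$. Uniqueness is then immediate: any $X'\in\mathcal R(\mathcal H)$ with $X'_{\Gamma+t}/Y_{\Gamma+t}=\chi_t$ satisfies $X'_{\Gamma+t}=Y_{\Gamma+t}\chi_t=X_{\Gamma+t}$ for $t>0$ and $X'_\Gamma=0=X_\Gamma$, so $X$ and $X'$ are $(\mathcal F_t)$-optional processes vanishing on $\mathcal H$ with identical restriction $(\,\cdot\,)_{\Gamma+t}$, whence $X'=X$ by the uniqueness in Proposition~\ref{rho}. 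It remains to check the crux, that $X=Y\rho(\chi_\cdot)$ actually lies in $\mathcal R(\mathcal H)$: I would apply integration by parts, $d(Y_tU_t)=Y_{t-}\,dU_t+U_{t-}\,dY_t+d[Y,U]_t$, split the contributions on $\mathcal H$ and on $\mathcal H^{c}$, and on $\mathcal H^{c}$ (which after $\Gamma$ is all of $(\Gamma,\infty)$) use Girsanov to transfer the $Q$-martingale property of $\chi=U_{\Gamma+\cdot}$ back to a $\P$-local-martingale property, the compensating drift being absorbed against the martingale part $N$ of $Y$; the leftover finite-variation term is then carried by $\mathcal H$, by the construction of $\rho$ and because $dA$ is carried by $\mathcal H$. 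A localization plus uniform-integrability argument, using $\chi_\infty\in L^1(Q)$, then promotes the local martingale to a uniformly integrable one.

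The main obstacle is precisely this last step: one has to reconstruct an \emph{additive} semimartingale decomposition --- martingale plus a finite-variation part supported on the thin optional set $\mathcal H$ --- out of purely \emph{multiplicative}, post-$\Gamma$ data, and this is where the precise decomposition of $\P$-martingales as $(\mathcal G_t)$-semimartingales after an honest time, the change of measure $Q$, and the delicate behaviour as $t\downarrow 0$ (where both numerator and denominator of $\chi$ vanish) must be combined. The boundary case $\Gamma=0$, together with the integrability bookkeeping needed to see that $M_\infty,N_\infty\in L^1(\P)$ propagates to $\chi_\infty\in L^1(Q)$ and that the reconstructed martingale is genuinely uniformly integrable, are the remaining technical points.
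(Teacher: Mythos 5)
First, a point of order: the paper does not prove this statement at all --- it is recalled verbatim as Theorem 3.2 of Az\'ema and Yor \cite{1} in the preliminaries section, so there is no in-paper proof to compare yours against; I can only judge your attempt on its own terms and against the original Az\'ema--Yor argument.

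Your overall architecture is the right one (and is essentially the classical one): for the direct part, reduce the $Q$-martingale property of $\chi$ to a $\P$-martingale property via Bayes' rule with density $L_t=E[Y_\infty\mid\mathcal G_{\Gamma+t}]/E(Y_\infty)$; for the converse, use the map $\rho$ of Proposition \ref{rho} to transport $\chi$ back to an optional process vanishing on $\mathcal H$, with uniqueness inherited from the uniqueness clause of that proposition. Two genuine gaps remain, however. First, the whole of part (1) rests on the identity $W_{\Gamma+t}=E[W_\infty\mid\mathcal G_{\Gamma+t}]$ for every $W\in\mathcal R(\mathcal H)$ (equivalently, your ``common factor'' claim), and you never prove it: it is not a formal consequence of the enlargement formulas but the substantive content of Az\'ema--Yor's theory of relative martingales --- one must show that the martingale part $M^{W}$ of $W$, which in the enlarged filtration $(\mathcal G_t)$ is in general only a semimartingale, becomes a genuine $(\mathcal G_{\Gamma+t})_{t>0}$-martingale after the honest time $\Gamma$, the drift from Jeulin's decomposition being exactly what must be controlled. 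Asserting that ``the $\mathcal R(\mathcal H)$-structure forces this'' names the theorem rather than proving it. Second, in part (2) you yourself flag that the membership $Y\rho(\chi_\cdot)\in\mathcal R(\mathcal H)$ --- manufacturing an additive decomposition, martingale plus finite variation carried by $\mathcal H$, out of purely multiplicative post-$\Gamma$ data --- is ``the main obstacle'', and the integration-by-parts/Girsanov sketch offered for it is not carried out; in particular nothing justifies writing $Y_{t-}\,dU_t$, since $U=\rho(\chi_\cdot)$ is a priori only an optional process, not a semimartingale on all of $\R_{+}$, being pieced together excursion by excursion before $\Gamma$. So the proposal is a correct road map on which the two hardest steps are left open.
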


\begin{lem}[\textbf{Lemma 5.7 of Jeulin \cite{jeulin80}}]\label{j80}
Let $\Gamma$ be an honest variable with respect to $(\mathcal{F}_{t})_{t\geq0}$. Let $(\mathcal{G}_{t})_{t\geq0}$ be the progressive enlargement of the filtration $(\mathcal{F}_{t})_{t\geq0}$ with respect to $\Gamma$. If $\tau$ is a stopping time with respect to $(\mathcal{G}_{t})_{t\geq0}$ such that $\Gamma<\tau$ on $\{\Gamma<\infty\}$, hence
$$\mathcal{G}_{\tau}=\mathcal{F}_{\tau}.$$
\end{lem}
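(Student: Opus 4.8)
The statement is a classical fact about progressive enlargements, so the plan is to reduce the equality $\mathcal{G}_\tau=\mathcal{F}_\tau$ to the two inclusions and to a description of the $\mathcal{G}_t$-predictable (or optional) sets on the stochastic interval $]\Gamma,\infty[$. First I would recall the standard structure of the progressively enlarged filtration $(\mathcal{G}_t)$: a set $A\in\mathcal{G}_t$ admits a representation $A\cap\{\Gamma>t\}=B\cap\{\Gamma>t\}$ for some $B\in\mathcal{F}_t$ (together with a companion description on $\{\Gamma\le t\}$). The key point is that on the event $\{\Gamma<t\}$ the $\sigma$-field $\mathcal{G}_t$ is generated by $\mathcal{F}_t$ and the single random variable $\Gamma$ (which is $\mathcal{F}_t$-measurable there, since $\{\Gamma\le s\}\in\mathcal{F}_t$ for $s\le t$); hence $\mathcal{G}_t\cap\{\Gamma<t\}=\mathcal{F}_t\cap\{\Gamma<t\}$.

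Next I would transfer this from the deterministic times $t$ to the stopping time $\tau$. The inclusion $\mathcal{F}_\tau\subseteq\mathcal{G}_\tau$ is immediate because $(\mathcal{F}_t)\subseteq(\mathcal{G}_t)$ and $\tau$ is a $(\mathcal{G}_t)$-stopping time. For the reverse inclusion, take $A\in\mathcal{G}_\tau$, so $A\cap\{\tau\le t\}\in\mathcal{G}_t$ for every $t$. Using the hypothesis $\Gamma<\tau$ on $\{\Gamma<\infty\}$, on the event $\{\tau\le t\}$ we have $\Gamma<t$ (on $\{\Gamma<\infty\}$) and on $\{\Gamma=\infty\}$ we have $\tau=\infty$ so $\{\tau\le t\}$ is empty for finite $t$; therefore $A\cap\{\tau\le t\}\subseteq\{\Gamma<t\}$, and by the previous paragraph $A\cap\{\tau\le t\}\in\mathcal{F}_t$. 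Since this holds for all $t\ge0$ and also trivially on $\{\tau=\infty\}$ (where the defining condition is about $\mathcal{G}_\infty$ versus $\mathcal{F}_\infty$, and $\mathcal{G}_\infty=\mathcal{F}_\infty\vee\sigma(\Gamma)$ with $\Gamma$ being $\mathcal{F}_\infty$-measurable as an honest time, so $\mathcal{G}_\infty=\mathcal{F}_\infty$), we conclude $A\in\mathcal{F}_\tau$.

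The main obstacle, and the step requiring care rather than cleverness, is the passage to $\tau=\infty$ and the handling of the set $\{\Gamma=\infty\}$: one must check that the hypothesis ``$\Gamma<\tau$ on $\{\Gamma<\infty\}$'' combined with $\Gamma$ being honest (hence $\mathcal{F}_\infty$-measurable and equal to $\sup\mathcal{H}$ for a closed optional $\mathcal{H}$) really does force $A\cap\{\tau=\infty\}\in\mathcal{F}_\infty$; here one uses that an honest time satisfies $\mathcal{G}_\infty=\mathcal{F}_\infty$, or alternatively that $\Gamma$ avoids being a genuine extra coordinate at infinity. A secondary technical point is justifying the representation $\mathcal{G}_t\cap\{\Gamma<t\}=\mathcal{F}_t\cap\{\Gamma<t\}$ from the very definition of the progressive enlargement (smallest right-continuous filtration containing $(\mathcal{F}_t)$ making $\Gamma$ a stopping time); this is where I would cite the standard construction, e.g. as in Jeulin or Dellacherie--Meyer, rather than reprove it. Everything else is bookkeeping with stopping-time $\sigma$-fields.
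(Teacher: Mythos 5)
The paper itself gives no proof of this lemma---it is quoted verbatim from Jeulin \cite{jeulin80}---so the only question is whether your argument stands on its own. It does not: there is a genuine gap at the step where you pass from the trace identity $\mathcal{G}_t\cap\{\Gamma<t\}=\mathcal{F}_t\cap\{\Gamma<t\}$ to the conclusion $A\cap\{\tau\le t\}\in\mathcal{F}_t$. Equality of trace $\sigma$-fields only tells you that $A\cap\{\tau\le t\}=C\cap\{\Gamma<t\}$ for some $C\in\mathcal{F}_t$; since $\{\Gamma<t\}$ is in general \emph{not} an element of $\mathcal{F}_t$ (that is the whole point of enlarging the filtration), you cannot conclude membership in $\mathcal{F}_t$. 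The same confusion appears earlier when you justify the $\mathcal{F}_t$-measurability of $\Gamma$ on $\{\Gamma<t\}$ by ``$\{\Gamma\le s\}\in\mathcal{F}_t$ for $s\le t$'': that inclusion is false for honest times; what is true is only that $\Gamma$ coincides on $\{\Gamma\le t\}$ with some $\mathcal{F}_t$-measurable variable ($\gamma_t$ in the paper's notation).

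In fact the intermediate statement you are aiming for---that $A\cap\{\tau\le t\}\in\mathcal{F}_t$ for all $t$, which would make $\tau$ an $\mathcal{F}$-stopping time and $\mathcal{F}_\tau$ its usual stopping-time $\sigma$-field---is false under the hypotheses. Take $\Gamma=g=\sup\{s\le 1:B_s=0\}$ for a Brownian motion $B$ and $\tau=g+\varepsilon$: this is a $(\mathcal{G}_t)$-stopping time with $\tau>\Gamma$ everywhere, yet $\{\tau\le t\}=\{g\le t-\varepsilon\}$ is the event that $B$ has no zero in $(t-\varepsilon,1]$, which is not $\mathcal{F}_t$-measurable for $\varepsilon<t<1$; so already $\Omega\in\mathcal{G}_\tau$ would fail to lie in your $\mathcal{F}_\tau$. (Note that $\tau=\overline{g}+t$ is exactly the random time to which the paper applies the lemma in Theorem \ref{opt}.) In Jeulin's statement $\mathcal{F}_\tau$ must be read as the $\sigma$-field generated by the variables $z_\tau$ with $z$ ranging over the $\mathcal{F}$-optional processes, the standard convention for an arbitrary random time. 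With that reading the proof is different in kind: one invokes the structure theorem for honest times asserting that every $\mathcal{G}$-optional process agrees on the stochastic interval $]\!]\Gamma,\infty[\![$ with an $\mathcal{F}$-optional process; since $\tau>\Gamma$, evaluating at $\tau$ gives $\mathcal{G}_\tau\subseteq\mathcal{F}_\tau$, and the reverse inclusion is immediate. Your reduction to the sets $\{\tau\le t\}$ cannot be repaired within the framework you set up.
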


\subsection{Balayage formula}
The balayage formula is a key tool very used in this work. In what follows, we recall results we used. We begin by giving the balayage formula in the predictable case.

\begin{prop}
Let $Y$ be a continuous semi-martingale and $\gamma^{'}_{t}=\sup\{s\leq t:Y_{s}=0\}$. If $k$ is a locally bouded predictable process.Then,
$$k_{\gamma^{'}_{t}}Y_{t}=k_{0}Y_{0}+{\int_{0}^{t}{k_{\gamma^{'}_{s}}dY_{s}}}.$$
\end{prop}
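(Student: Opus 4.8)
The plan is to prove the balayage formula in the predictable case by a standard approximation argument: first establish it for elementary (piecewise constant) predictable processes, then pass to the limit using a monotone-class / functional-class argument together with stochastic-calculus continuity (dominated convergence for stochastic integrals). The key observation making everything work is that $\gamma'_t$ is constant on each excursion interval of $Y$ away from $0$: if $]a,b[$ is a component of $\{t: Y_t\neq 0\}$ then $\gamma'_t=a$ for all $t\in]a,b[$, and on the (closed) zero set $\{Y=0\}$ one has $Y_t=0$, so the process $t\mapsto k_{\gamma'_t}Y_t$ only "sees" the value of $k$ frozen at the left endpoint of the current excursion.

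First I would treat the model case $k_s=\mathbf{1}_{]u,v]}(s)\,F$ where $0\le u<v$ and $F$ is a bounded $\mathcal F_u$-measurable random variable; by linearity this extends to finite linear combinations, i.e. to all bounded elementary predictable $k$. For such $k$ the claimed identity reads $\mathbf{1}_{\{u<\gamma'_t\le v\}}F\,Y_t = F\int_0^t \mathbf{1}_{\{u<\gamma'_s\le v\}}\,dY_s$, and since $\{s:\gamma'_s\in]u,v]\}$ is, for fixed $\omega$, a union of excursion intervals of $Y$ (together with boundary zeros where $Y=0$), one can verify the formula excursion by excursion: on an excursion $]a,b[$ with $a\in]u,v]$ the integrand $\mathbf{1}_{\{u<\gamma'_s\le v\}}$ equals $1$ on $]a,b[$, the stochastic integral over that slice is $Y_{t\wedge b}-Y_{t\wedge a}=Y_{t\wedge b}$ (using $Y_a=0$), and $k_{\gamma'_t}Y_t$ picks up exactly the same contribution because $Y$ vanishes at the endpoints. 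A clean way to organize this without case-chasing is to write $k_{\gamma'_t}=k_0+\int_0^t (\text{jump contributions})$ — but since $k_{\gamma'}$ is a pure-jump process of finite variation supported on $\{Y=0\}$, integration by parts gives $d(k_{\gamma'_t}Y_t)=k_{\gamma'_t}\,dY_t+Y_t\,d(k_{\gamma'_t})+d[k_{\gamma'},Y]_t$, and the last two terms vanish because $d(k_{\gamma'_\cdot})$ is carried by $\{Y=0\}$ where $Y_t=0$ and because $[k_{\gamma'},Y]=0$ ($k_{\gamma'}$ being of finite variation). This is really the crux of the argument.

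Next I would pass from elementary to general bounded predictable $k$ by a monotone-class argument: the collection of bounded predictable $k$ for which the formula holds is a vector space, closed under bounded monotone limits (both sides converge — the left by ordinary dominated convergence pathwise, the right by the dominated convergence theorem for stochastic integrals of Hirsch/Protter type, using that the approximating integrands are uniformly bounded and converge pointwise), and it contains the elementary processes which generate the predictable $\sigma$-field; hence it contains all bounded predictable processes. Finally, localization removes the boundedness assumption: for locally bounded predictable $k$ pick stopping times $T_n\uparrow\infty$ with $k^{T_n}$ bounded, apply the bounded case to $k\mathbf{1}_{[0,T_n]}$ (noting $\gamma'_{t\wedge T_n}$ behaves correctly since $T_n$ is a stopping time), and let $n\to\infty$.

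The main obstacle I anticipate is making the "excursion by excursion" or integration-by-parts step rigorous, specifically justifying that $t\mapsto k_{\gamma'_t}$ is a well-behaved (c\`adl\`ag, finite-variation, adapted) process whose Stieltjes measure is carried by the zero set $\{Y=0\}$, and that $[k_{\gamma'},Y]\equiv 0$; the subtlety is that the zero set can be complicated (a closed set of positive measure, with no isolated points) and the sum over excursions need not be locally finite, so one must argue via the predictable-process structure and the fact that $Y$ vanishes on the support of $d(k_{\gamma'})$ rather than by naive term-by-term summation. Once that structural fact is in hand, both the model-case verification and the monotone-class extension are routine.
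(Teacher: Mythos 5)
The paper offers no proof of this proposition: it is recalled as a classical preliminary (the balayage formula for continuous semimartingales, going back to Az\'ema--Yor and Yor's 1979 balayage paper, both cited in the bibliography), so there is no in-paper argument to compare yours against. Your proof is the standard one (essentially the proof of the balayage formula in Revuz--Yor, Ch.~VI): verify the identity for $k_s=\mathbf{1}_{]u,v]}(s)F$ with $F$ bounded and $\mathcal F_u$-measurable, where $t\mapsto k_{\gamma'_t}=F\,\mathbf{1}_{\{u<\gamma'_t\le v\}}$ is the indicator of an interval (because $\gamma'$ is nondecreasing), hence of finite variation with its Stieltjes measure carried by $\{Y=0\}$, so that in the integration by parts both $\int_0^t Y_s\,d(k_{\gamma'_s})$ and $[k_{\gamma'},Y]$ vanish; then extend by monotone class and localization. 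This is correct, and your identification of the crux --- that $d(k_{\gamma'_\cdot})$ charges only $\{Y=0\}$ --- is the right one. The single point you should make explicit is the measurability needed for the right-hand side to be a legitimate stochastic integral: one must check that $s\mapsto k_{\gamma'_s}$ (or rather its left-limit version $k_{\gamma'_{s-}}$, which differs from it on an at most countable set and hence gives the same integral against the continuous semimartingale $Y$) is predictable; this uses that $k$ is predictable and that $\gamma'$ is an increasing, left-continuous-modifiable time change with values in the closed set $\{Y=0\}$. With that remark added, the argument is complete.
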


Now, we shall recall the balayage formula in the progressive case.
\begin{prop}[\textbf{Ouknine and Bouhadou \cite{siam}} ]\label{balpro}
Let $Y$ be a continuous semimartingale and $\gamma^{'}_{t}=\sup\{s\leq t:Y_{s}=0\}$. If $k$ is a bounded progressive process ${^{p}k_{\cdot}}$ denotes its predictable projection.Then,
$$k_{\gamma^{'}_{t}}Y_{t}=k_{0}Y_{0}+{\int_{0}^{t}{^{p}k_{\gamma^{'}_{s}}dY_{s}}+R_{t}},$$
where $R$ is a process of bounded variation, adapted, continuous such that $dR_{t}$ is carried by the set $\{Y_{s}=0\}$.
\end{prop}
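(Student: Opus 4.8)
The plan is to work directly with the excursion structure of $Y$ rather than by approximation. Write the canonical decomposition $Y=M+V$, with $M$ a continuous local martingale and $V$ continuous, adapted, of finite variation, and set $\Phi_{t}:=k_{\gamma'_{t}}$ and $N_{t}:=\int_{0}^{t}\Phi_{s}\,dY_{s}$ (the $dM$-part is legitimate since $M$ is continuous). Because $\gamma'$ is nondecreasing, right-continuous, equal to $t$ on $\{Y_{t}=0\}$, and constant on each open excursion interval $]g_{n},d_{n}[$ of $Y$, the process $\Phi$ is bounded, progressive, constant (equal to $k_{g_{n}}$) on each $]g_{n},d_{n}[$, and satisfies $\Phi_{\gamma'_{t}}=\Phi_{t}$. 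Using $Y_{\gamma'_{t}}=0$ (off a harmless initial segment) together with the constancy of $\Phi$ on $[\gamma'_{t},t]$ one gets the balayage-type identity
\[
\Phi_{t}Y_{t}=\Phi_{0}Y_{0}+\int_{0}^{t}\Phi_{s}\,dY_{s}-N_{\gamma'_{t}}.
\]

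The heart of the argument is to show that $t\mapsto N_{\gamma'_{t}}$ is continuous, adapted, of finite variation, with $dN_{\gamma'}$ carried by $\{Y_{s}=0\}$. The jump of $\gamma'$ at a right end-point $d_{n}$ produces no jump in $N_{\gamma'}$, since the corresponding increment is $\int_{g_{n}}^{d_{n}}\Phi_{s}\,dY_{s}=k_{g_{n}}(Y_{d_{n}}-Y_{g_{n}})=0$; on an open excursion interval $N_{\gamma'}$ is constant; and on $\{Y=0\}$ the martingale part of $N$ does not move. This last point is where the single analytic input enters: by the occupation times formula, $\int_{0}^{t}\1_{\{Y_{s}=0\}}\,d\langle Y\rangle_{s}=\int_{\R}\1_{\{0\}}(a)\,L^{a}_{t}(Y)\,da=0$, so $\int_{0}^{\cdot}\1_{\{Y_{s}=0\}}\Phi_{s}\,dM_{s}$ is a continuous local martingale with vanishing bracket, hence identically zero. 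Collecting these facts yields $N_{\gamma'_{t}}=N_{0}+\int_{0}^{t}\1_{\{Y_{s}=0\}}\Phi_{s}\,dV_{s}$, which manifestly has all the required properties.

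It remains to pass from the progressive integrand $\Phi$ to its predictable projection ${}^{p}\Phi$. On the set $\{Y\neq0\}$ — which is predictable, $Y$ being continuous — one has $\gamma'_{s}=\gamma'_{s-}<s$, so $\Phi_{s}=k_{\gamma'_{s-}}$ is $\mathcal{F}_{s-}$-measurable (because $k$ is progressive), whence ${}^{p}\Phi=\Phi$ on $\{Y\neq0\}$. Since moreover $d\langle M\rangle=d\langle Y\rangle$ is carried by $\{Y\neq0\}$, the stochastic integral $\int\Phi\,dM$ coincides with $\int{}^{p}\Phi\,dM$, and $\int\1_{\{Y\neq0\}}\Phi\,dV=\int\1_{\{Y\neq0\}}{}^{p}\Phi\,dV$. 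Substituting into the identity above, using the expression for $N_{\gamma'}$, and recombining the $\{Y=0\}$-terms, one is left with
\[
k_{\gamma'_{t}}Y_{t}=k_{0}Y_{0}+\int_{0}^{t}{}^{p}\Phi_{s}\,dY_{s}-\int_{0}^{t}\1_{\{Y_{s}=0\}}{}^{p}\Phi_{s}\,dV_{s},
\]
which is the asserted formula with $R_{t}:=-\int_{0}^{t}\1_{\{Y_{s}=0\}}{}^{p}\Phi_{s}\,dV_{s}$; this $R$ is continuous, adapted, of finite variation (bounded by $\sup|k|$ times the variation of $V$), with $dR$ carried by $\{Y=0\}$, and $R_{0}=0$.

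The step I expect to be the main obstacle is the second one, the identification of $N_{\gamma'}$: it forces one to handle simultaneously the upward jumps of the time-change $\gamma'$ (absorbed by the vanishing of $Y$ at both ends of each excursion) and the behaviour of the martingale part of $N$ on the zero set, and the whole point — that the correction is a genuine finite-variation process supported on $\{Y=0\}$, not merely a semimartingale — rests entirely on the fact that $d\langle Y\rangle$ puts no mass on $\{Y=0\}$. The replacement of $\Phi$ by ${}^{p}\Phi$ is then routine, the only delicate point being the predictability of $\Phi$ off the zero set.
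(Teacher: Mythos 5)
The paper states this proposition without proof (it is quoted from Bouhadou and Ouknine), so your argument has to stand on its own — and it breaks at precisely the step you flag as the main obstacle, namely the identification $N_{\gamma'_{t}}=N_{0}+\int_{0}^{t}\1_{\{Y_{s}=0\}}\Phi_{s}\,dV_{s}$. The three facts you collect (zero increment of $N$ over each completed excursion, constancy of $N_{\gamma'}$ on open excursion intervals, vanishing of $\int\1_{\{Y=0\}}\Phi\,dM$) do not imply that identity. Because $N$ is not of finite variation, a continuous process can be constant on every excursion interval, have zero increment across every completed excursion, satisfy $\int\1_{\{Y=0\}}\,dN=0$ as a stochastic integral, and still be a nontrivial continuous singular process whose variation is carried by $\{Y=0\}$ — the Cantor-function phenomenon — and that is exactly what happens here. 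Concretely, take $Y=B$ a Brownian motion started at $0$ (so $V\equiv0$) and $k_{s}=\liminf_{u\downarrow s}\bigl(\1_{\{B_{u}>0\}}-\1_{\{B_{u}<0\}}\bigr)$, which is bounded and progressive. Then $k_{\gamma'_{t}}B_{t}=|B_{t}|$, while ${}^{p}(k_{\gamma'})_{s}=\mathrm{sgn}(B_{s})$ off the zero set (your own projection argument), so $\int_{0}^{t}{}^{p}(k_{\gamma'})_{s}\,dB_{s}=|B_{t}|-L^{0}_{t}(B)$ by Tanaka, forcing $R_{t}=L^{0}_{t}(B)\neq0$; your formula $R_{t}=-\int_{0}^{t}\1_{\{Y_{s}=0\}}{}^{p}\Phi_{s}\,dV_{s}$ gives $R\equiv0$. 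In this example $N_{\gamma'_{t}}=-L^{0}_{t}(B)$: every excursion increment of $N$ is indeed zero and $\int\1_{\{B=0\}}\Phi\,dB=0$, yet $N_{\gamma'}$ is a nonzero singular process. The same defect undermines the justification (though not, as it happens, the truth) of your first identity: $\int_{g_{n}}^{d_{n}}\Phi_{s}\,dM_{s}$ cannot be evaluated pathwise as $k_{g_{n}}(M_{d_{n}}-M_{g_{n}})$, because $g_{n}$ is not a stopping time and stochastic integrals are not computed excursion by excursion; the failure of those naive evaluations to account for the total increment is precisely where the local-time part of $R$ is born.

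The final step, replacing $\Phi$ by ${}^{p}\Phi$ via ${}^{p}\Phi=\Phi$ on the predictable set $\{Y\neq0\}$ and the occupation-time identity $\int\1_{\{Y=0\}}\,d\langle Y\rangle=0$, is sound and is indeed the standard reduction. But the theorem cannot be obtained without confronting the singular part of $N_{\gamma'}$ head-on. The classical proofs (Meyer, Stricker and Yor; El Karoui; Yor, S\'eminaire de Probabilit\'es XIII) do this by approximating $\gamma'$ through genuine stopping times — e.g.\ keeping only excursions of length at least $\varepsilon$, or replacing $\gamma'_{s}$ by the first zero after the points of a deterministic grid — computing exactly at that level where the ``constant on a stochastic interval'' manipulations are legitimate, and then passing to the limit, in which $R$ appears as the limit of compensating sums rather than as an explicit $dV$-integral. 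As written, your argument proves a statement that is already false for Brownian motion.
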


Proposition \ref{balpro} is a power and interesting tool. However, the fact to know nothing about the form of the process $R$ can be limiting. The utilisation of processes $Z^{\alpha}$ and $\mathcal{Z}^{\alpha}$ in this paper is capital. Bouhadou and Ouknin \cite{siam} have identified the process $R$ of Proposition \ref{balpro} when the progressive process $k$ is equal to $Z^{\alpha}$ or to $\mathcal{Z}^{\alpha}$. We recall these results in the following.  

\begin{prop}[\textbf{Ouknine and Bouhadou \cite{siam}}]
Let $Y$ be a continuous semimartingale and $Z^{\alpha}$, the process defined in \eqref{zalpha}. Then,
$$Z^{\alpha}_{t}Y_{t}=\int_{0}^{t}{Z^{\alpha}_{s}dY_{s}}+(2\alpha-1)L_{t}^{0}(Z^{\alpha}Y),$$
where, $L_{\cdot}^{0}(Z^{\alpha}Y)$ is the local time of the semimartingale $Z^{\alpha}Y$.
\end{prop}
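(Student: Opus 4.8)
The plan is to apply the progressive balayage formula of Proposition \ref{balpro} with the bounded progressive process $k=Z^{\alpha}$ and then to identify the resulting bounded-variation term $R$ explicitly. First I would compute the predictable projection ${}^{p}Z^{\alpha}$. Since $Z^{\alpha}_{t}=\sum_{n}\zeta_{n}1_{]g_{n},d_{n}[}(t)$ and each $\zeta_{n}$ is independent of everything else (in particular of $\mathcal{F}_{\infty}$ generated by $Y$), for any $t$ lying in an excursion interval $]g_{n},d_{n}[$ the conditional expectation $\E[\zeta_{n}\mid\mathcal{F}_{t-}]=\E[\zeta_{n}]=2\alpha-1$; on the complement (the zero set $\mathcal{W}$, which is where $Y_{t}=0$) the value is irrelevant because it gets multiplied against $dY$ whose relevant mass near zeros will be handled by $R$. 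Hence ${}^{p}Z^{\alpha}_{\gamma'_{s}}=(2\alpha-1)$ off the zero set, and I would argue $\int_{0}^{t}{}^{p}Z^{\alpha}_{\gamma'_{s}}\,dY_{s}=(2\alpha-1)\int_{0}^{t}dY_{s}=(2\alpha-1)(Y_{t}-Y_{0})$, using that $\gamma'_{s}$ lands in a zero of $Y$ so the predictable projection reduces to the constant. Actually more carefully: $Z^{\alpha}_{\gamma'_{t}}=0$ since $Y_{\gamma'_{t}}=0$ puts $\gamma'_{t}\in\mathcal{W}$, so the left side of \ref{balpro} is literally $Z^{\alpha}_{\gamma'_{t}}Y_{t}$; but the intended reading pairs $Z^{\alpha}$ with $Y$ through the excursion structure, so one must instead apply balayage to recover $Z^{\alpha}_{t}Y_{t}$ directly — see below.

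The cleaner route, which I would actually follow, is to observe that $Z^{\alpha}_{t}Y_{t}$ is itself a continuous semimartingale: on each excursion interval it equals $\pm Y_{t}$, and it vanishes on $\mathcal{W}$, so $Z^{\alpha}Y$ is continuous and adapted. I would then apply the predictable balayage formula (the proposition immediately preceding Proposition \ref{balpro}) to the semimartingale $U:=Z^{\alpha}Y$ with the predictable process $k\equiv 1$ — trivial — and instead use balayage in the form: for the semimartingale $Y$ and a suitable sign process, $Z^{\alpha}_{\gamma'_{t}}$ should be replaced by the correct predictable companion. Concretely, I would write $Z^{\alpha}_{t}=\mathrm{sgn}$-type process and invoke Tanaka's formula together with the decomposition of $U=Z^{\alpha}Y$. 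The key structural fact is that $Z^{\alpha}Y$ has the same absolute value as $Y$ and only its sign differs, chosen i.i.d.\ per excursion; hence its local time at $0$, $L^{0}(Z^{\alpha}Y)$, exists, and comparing the martingale parts gives $Z^{\alpha}_{t}Y_{t}-\int_{0}^{t}Z^{\alpha}_{s}\,dY_{s}$ is of bounded variation supported on $\{Y=0\}$, i.e.\ it is $c\,L^{0}_{t}(Z^{\alpha}Y)$ for some constant $c$; evaluating the constant through the expected sign $\E[\zeta_{n}]=2\alpha-1$ (equivalently, decomposing $Z^{\alpha}Y=\alpha$-weighted combination of $(Z^{\alpha}Y)^{+}$ and $(Z^{\alpha}Y)^{-}$ and matching with the Tanaka formula for $|Y|$) yields $c=2\alpha-1$.

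So the key steps in order are: (1) show $Z^{\alpha}Y$ is a continuous semimartingale with $|Z^{\alpha}Y|=|Y|$; (2) apply Proposition \ref{balpro} (progressive balayage) with $k=Z^{\alpha}$, getting $Z^{\alpha}_{t}Y_{t}=\int_{0}^{t}{}^{p}Z^{\alpha}_{\gamma'_{s}}\,dY_{s}+R_{t}$ with $dR$ carried by $\{Y=0\}$; (3) compute ${}^{p}Z^{\alpha}$ using the independence of the $\zeta_{n}$ from $\mathcal{F}$, obtaining that off $\{Y=0\}$ it equals $2\alpha-1$, while on $\{Y=0\}$ the value of $\int {}^{p}Z^{\alpha}_{\gamma'_s}dY_s$ reduces to $\int Z^{\alpha}_s dY_s$ because $dY$ restricted to the zero set contributes only through its local-time part, which is absorbed into $R$; (4) identify $R_{t}=(2\alpha-1)L^{0}_{t}(Z^{\alpha}Y)$ by matching the bounded-variation, zero-set-supported part against Tanaka's formula applied to $(Z^{\alpha}Y)^{\pm}$. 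The main obstacle I anticipate is step (3)/(4): justifying rigorously that the predictable projection of $Z^{\alpha}$ against $dY$ collapses to the clean expression $\int_{0}^{t}Z^{\alpha}_{s}\,dY_{s}+(2\alpha-1)L^{0}_{t}$, i.e.\ controlling the interplay between the progressive (non-predictable) process $Z^{\alpha}$ — which jumps at the random excursion endpoints $g_{n},d_{n}$ — and the measure $dY$ near the zero set, and then pinning down the constant $2\alpha-1$ rather than merely asserting $R$ is proportional to local time. This is exactly the identification that Bouhadou and Ouknine carry out, and it hinges on the i.i.d.\ Bernoulli($\alpha$) sign structure of the excursions.
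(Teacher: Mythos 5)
First, a point of reference: the paper does not prove this proposition at all --- it is stated in the preliminaries as a recalled result of Bouhadou and Ouknine (Proposition 2.2 of \cite{siam}), so there is no in-paper argument to compare yours against and your attempt must stand on its own. As it stands it is a plan rather than a proof, and its central computation is backwards. You claim that, by independence of the $\zeta_{n}$ from $\mathcal{F}$, the predictable projection ${}^{p}(Z^{\alpha}_{\gamma'_{\cdot}})_{s}$ equals $2\alpha-1$ off the zero set, so that $\int_{0}^{t}{}^{p}(Z^{\alpha}_{\gamma'_{\cdot}})_{s}\,dY_{s}=(2\alpha-1)(Y_{t}-Y_{0})$. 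That cannot be right: it would make $Z^{\alpha}Y-(2\alpha-1)Y$ a bounded-variation process, whereas already for $Y=B$ a Brownian motion the quadratic variation of $Z^{\alpha}B-(2\alpha-1)B$ equals $t+(2\alpha-1)^{2}t-2(2\alpha-1)\int_{0}^{t}Z^{\alpha}_{s}\,ds$, which is not identically zero for any $\alpha\in(0,1)$. The projection must be taken in the enlarged filtration that makes $Z^{\alpha}$ progressive while keeping $Y$ a semimartingale (otherwise $\int Z^{\alpha}\,dY$ is not even defined); in that filtration the sign $\zeta_{n}$ of the current excursion is already known at time $s-$ for $s$ strictly inside $]g_{n},d_{n}[$, so the projection equals $Z^{\alpha}_{s}$ there --- this is what produces the term $\int_{0}^{t}Z^{\alpha}_{s}\,dY_{s}$ --- and it is only on the zero set, integrated against the local-time measure, that the sign of the excursion about to begin is genuinely unpredictable and the projection takes the value $2\alpha-1$. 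You have the roles of the two regions exactly inverted.

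Second, even granting the corrected intermediate decomposition $Z^{\alpha}_{t}Y_{t}=\int_{0}^{t}Z^{\alpha}_{s}\,dY_{s}+\tilde{R}_{t}$ with $d\tilde{R}$ carried by $\{Y=0\}$, your step (4) asserts that a continuous bounded-variation process carried by the zero set must equal $c\,L^{0}(Z^{\alpha}Y)$ for a constant $c$, and that $c=2\alpha-1$ ``by the expected sign.'' Neither claim is established: the zero set of a general continuous semimartingale supports many measures other than $dL^{0}$, and the constant has to be extracted by an actual computation --- for instance by applying the progressive balayage formula to $|Y|$ with $k=1_{\{\text{sign of the current excursion}=+1\}}$, whose predictable projection against $dL^{0}(Y)$ equals $\alpha$, and matching with Tanaka's formula for $(Z^{\alpha}Y)^{+}$. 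You do correctly notice that $Z^{\alpha}_{\gamma'_{t}}=0$ (since $\gamma'_{t}$ lies in the zero set), so Proposition \ref{balpro} cannot be applied to $k=Z^{\alpha}$ verbatim, but you never supply the fix: replace $Z^{\alpha}$ by its right-limit process $h_{t}=\liminf_{s\searrow t}Z^{\alpha}_{s}$, for which $h_{\gamma'_{t}}=Z^{\alpha}_{t}$ off the zero set --- the same device this paper uses in the proofs of Theorem \ref{*mart} and Theorem \ref{abs}. To your credit you flag steps (3)--(4) as the main obstacle and defer to \cite{siam}, but those steps are precisely the content of the proposition, so the proposal does not constitute a proof.
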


\begin{prop}[\textbf{Ouknine and Bouhadou \cite{siam}}]
Let $Y$ be a continuous semimartingale and $Z^{\alpha}$, the process defined in \eqref{Zalpha}. Then,
$$\mathcal{Z}^{\alpha}_{t}Y_{t}=\int_{0}^{t}{\mathcal{Z}^{\alpha}_{s}dY_{s}}+\int_{0}^{t}{(2\alpha(s)-1)dL_{s}^{0}(\mathcal{Z}^{\alpha}Y)},$$
where, $L_{\cdot}^{0}(\mathcal{Z}^{\alpha}Y)$ is the local time of the semimartingale $\mathcal{Z}^{\alpha}Y$.
\end{prop}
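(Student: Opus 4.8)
The plan is to derive the identity from its time-homogeneous counterpart (the preceding proposition) by an induction on the number $m$ of jump points of the step function $\alpha$, splicing the formula across the deterministic times $t_{1},\dots,t_{m}$. Two structural facts about $W:=\mathcal{Z}^{\alpha}Y$ will be used throughout. First, since $\mathcal{Z}^{\alpha}_{s}\in\{-1,+1\}$ on each excursion interval $]g_{n},d_{n}[$ of $Y$ and $\mathcal{Z}^{\alpha}_{s}=0$ precisely on $\mathcal{W}:=\{s\ge 0:\,Y_{s}=0\}$, one has $|W|=|Y|$ and $\{s:\,W_{s}=0\}=\mathcal{W}$, so $dL^{0}_{\cdot}(W)$ is carried by $\mathcal{W}$, while on $\mathcal{W}^{c}$ the process $\mathcal{Z}^{\alpha}$ is locally constant, so $dW=\mathcal{Z}^{\alpha}\,dY$ there. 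Second, the sign attached to an excursion straddling a break-point $t_{i}$ is kept for the whole excursion, so that $W$ is a genuine \emph{continuous} semimartingale (no jump is created at the $t_{i}$'s), consistently with the right-hand side of the formula being continuous.

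For $m=0$, $\alpha$ is constant and $\mathcal{Z}^{\alpha}$ is the process $Z^{\alpha}$ of \eqref{zalpha}, so the statement is exactly the preceding proposition. For the inductive step, let $\alpha^{\sharp}$ be the step function obtained from $\alpha$ by deleting the last break-point $t_{m}$ (so $\alpha^{\sharp}$ has $m-1$ break-points) and set $T:=\inf\{s\ge t_{m}:\,Y_{s}=0\}\in\mathcal{W}$, with $T=t_{m}$ when $Y_{t_{m}}=0$. On $[0,t_{m}]$ the processes $\mathcal{Z}^{\alpha}$ and $\mathcal{Z}^{\alpha^{\sharp}}$ coincide pathwise (same excursion coins, same parameter in force on every cell met before $t_{m}$), hence the induction hypothesis applied to $\alpha^{\sharp}$ gives
$$\mathcal{Z}^{\alpha}_{t}Y_{t}=\int_{0}^{t}\mathcal{Z}^{\alpha}_{s}\,dY_{s}+\int_{0}^{t}(2\alpha(s)-1)\,dL^{0}_{s}(\mathcal{Z}^{\alpha}Y),\qquad t\le t_{m}.$$
On $[t_{m},T]$ the excursion straddling $t_{m}$ (if any) has a fixed sign $\varepsilon$, so $W_{s}=\varepsilon Y_{s}$ and $dW_{s}=\varepsilon\,dY_{s}=\mathcal{Z}^{\alpha}_{s}\,dY_{s}$ there, while $L^{0}(W)$ is constant on $[t_{m},T]$ because $\mathcal{W}$ meets this interval only at $T$; hence the incremental identity holds on $[t_{m},T]$. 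Finally, every excursion of $Y$ after $T$ starts after $t_{m}$, so the process $\mathcal{Z}^{\alpha}$ shifted at $T$ is a homogeneous $Z^{\alpha_{m}}$ attached to the continuous semimartingale $(Y_{T+u})_{u\ge 0}$ (which starts at $0$); the preceding proposition applied to that data yields the incremental identity on $[T,+\infty)$. Splicing the three ranges and using the additivity of $\int\!\cdot\,dY$ and of $L^{0}(\mathcal{Z}^{\alpha}Y)$ over adjacent time intervals closes the induction.

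Alternatively, one may argue in a single step with the progressive balayage formula (Proposition \ref{balpro}): writing $\gamma'_{t}=\sup\{s\le t:\,Y_{s}=0\}$ and letting $k$ be the bounded progressive process that assigns to each excursion left-endpoint $g_{n}$ the (frozen) sign of that excursion and vanishes elsewhere, one checks that $k_{\gamma'_{t}}Y_{t}=\mathcal{Z}^{\alpha}_{t}Y_{t}$ for every $t$; since the sign families $(\zeta^{i}_{n})$ are independent of $\mathcal{F}_{\infty}$, the predictable projection satisfies ${}^{p}k_{g_{n}}=E[\zeta^{i}_{n}]=2\alpha_{i}-1=2\alpha(g_{n})-1$, with $i$ the index of the cell containing $g_{n}$, and Proposition \ref{balpro} produces a bounded-variation remainder $R$ carried by $\mathcal{W}$, which is then identified with $\int_{0}^{\cdot}(2\alpha(s)-1)\,dL^{0}_{s}(\mathcal{Z}^{\alpha}Y)$ by Tanaka's formula for $W$ and the fact that $dR$ and $dL^{0}(W)$ both live on $\mathcal{W}$. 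The main obstacle in either route is the same bookkeeping at the break-points: checking that the excursion-sign families restart independently so the shifted $\mathcal{Z}^{\alpha}$ is really a fresh homogeneous $Z^{\alpha_{m}}$, that no jump of $W$ appears at the $t_{i}$'s, and — in the balayage variant — that the predictable projection contributes exactly the skew coefficient $2\alpha-1$ against the local time; the separate computation of $\int_{0}^{t}\mathcal{Z}^{\alpha}_{s}\,dY_{s}$ and of $L^{0}(W)$ cell by cell is otherwise routine.
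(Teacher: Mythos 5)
The paper does not actually prove this proposition: it is recalled verbatim from Bouhadou and Ouknine \cite{siam} in the preliminaries, so there is no in-paper argument to compare yours against; I am judging your proposal on its own. Your main route --- induction on the number of break-points, using the stopping time $T=\inf\{s\geq t_{m}: Y_{s}=0\}$ to splice the induction hypothesis on $[0,t_{m}]$, the ``frozen excursion'' on $[t_{m},T]$, and the homogeneous proposition applied to the shifted semimartingale $(Y_{T+u})_{u\geq0}$ on $[T,\infty)$ --- is sound and is a legitimate, self-contained reduction to the time-homogeneous case (which the paper also only cites). The bookkeeping you flag (additivity of the stochastic integral and of $L^{0}(\mathcal{Z}^{\alpha}Y)$ across the three ranges, no increase of $L^{0}$ on $[t_{m},T)$, the shifted sign process being a fresh homogeneous $Z^{\alpha_{m}}$ because every post-$T$ excursion starts in $[t_{m},\infty)$) all checks out.

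The one genuine issue is your first ``structural fact.'' You assert that the sign attached to an excursion straddling a break-point $t_{i}$ is kept for the whole excursion, and the entire inductive step on $[t_{m},T]$ (as well as the identity $\mathcal{Z}^{\alpha}=\mathcal{Z}^{\alpha^{\sharp}}$ up to the end of the straddling excursion) rests on this. But the displayed definition \eqref{Zalpha} says the opposite: the indicator $1_{]g_{n},d_{n}[\cap[t_{i},t_{i+1})}(t)$ switches the coin from $\zeta^{i}_{n}$ to $\zeta^{i+1}_{n}$ at $t_{i+1}$ \emph{inside} a straddling excursion, which would make $\mathcal{Z}^{\alpha}Y$ jump at $t_{i+1}$ with positive probability and falsify the stated formula (and the continuity of the solutions built from it in Section \ref{s5}). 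Your reading --- the coin is indexed by the cell containing $g_{n}$, i.e.\ $\mathcal{Z}^{\alpha}_{t}=\sum_{n}\sum_{i}\zeta^{i}_{n}1_{]g_{n},d_{n}[}(t)1_{[t_{i},t_{i+1})}(g_{n})$ --- is the one under which the proposition is true and is the one forced by the balayage structure $k_{\gamma'_{t}}Y_{t}$, but it should be stated as a correction of \eqref{Zalpha}, not derived from it. Separately, your alternative balayage route is under-developed at exactly its crucial point: Proposition \ref{balpro} delivers $\int_{0}^{t}{}^{p}k_{\gamma'_{s}}\,dY_{s}+R_{t}$, an integral of the projection against $dY$ (not against $dL^{0}$), so identifying the finite-variation part with $\int_{0}^{t}(2\alpha(s)-1)\,dL^{0}_{s}(\mathcal{Z}^{\alpha}Y)$ requires the Tanaka comparison $L^{0}(\mathcal{Z}^{\alpha}Y)=L^{0}(Y)$ and an actual computation, not just the observation that both measures live on $\mathcal{W}$. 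Since that route is offered only as an alternative, it does not undermine the main argument.
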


\section{Martingales for signed measures}\label{s2} 

Throughout the rest of this paper, we shall consider a measure space $(\Omega, \mathcal{F}_{\infty}, \Qv)$, where $\Qv$ is a bounded signed measure. Let $\P$ be a probability measure on $\mathcal{F}_{\infty}$ such that $\Qv\ll\P$ and $(\mathcal{F}_{t})_{t\geq0}$ be a right continuous filtration completed with respect to $\P$ such that $\mathcal{F}_{\infty}=\vee_{t}{\mathcal{F}_{t}}$. 

This section is dedicated to the presentation of our contributions on the study of martingales for signed measures that  Ruiz de Chavez has introduced for the first time in \cite{chav}.

\subsection{Characterization results}

In this subsection, we present our main results on the study of  martingales for signed measures. We start by recalling how Ruiz de Chavez has defined a martingale with respect to a signed measure $\Qv$.

\begin{defn}\label{chav}
We say that an $(\mathcal{F}_{t})_{t\geq0}$- adapted process $M$ is a $(\Qv,\P)$- martingale if:
\begin{enumerate}
	\item $M$ is a $\P$- semimartingale.
	\item $MD$ is a $\P$- martingale.
\end{enumerate}
$M$ is a $(\Qv,\P)$- uniformly integrable  martingale (respectively local martingale) if $DM$ is a $\P$- uniformly integrable  martingale (respectively, local martingale).
\end{defn}

Remark that thanks to condition 1. of the above definition, we can use tools of the usual stochastic calculus to manipulate $(\Qv,\P)$- martingales. Furthermore, any  $(\Qv,\P)$- martingale $M$ admits the following decomposition: $M=m+v$, where $m$ is a $\P-$ martingale and $v$ is a finite variation process.

The following proposition is the main result of the present work. It gives a characterization of $(\Qv,\P)-$ martingales.

\begin{prop}\label{def}
Let $M=m+v$ be a $\P-$ semimartingale. The following hold
$$M\text{ is a } (\Qv,\P)-\text{ local martingale}\Leftrightarrow \forall t\geq0\text{, }\int_{0}^{t}{D_{s}dv_{s}}+\langle M,D\rangle_{t}=0.$$
\end{prop}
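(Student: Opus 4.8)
The plan is to apply the integration-by-parts formula to the product $MD$ and read off when the resulting semimartingale is a local martingale, using the fact (from Definition~\ref{chav}) that $M$ is a $(\Qv,\P)$-local martingale precisely when $MD$ is a $\P$-local martingale. Write $M=m+v$ with $m$ a continuous local $\P$-martingale and $v$ of finite variation, and recall that $D$ is a continuous $\P$-martingale. Since both $M$ and $D$ are continuous semimartingales, integration by parts gives
$$M_{t}D_{t}=M_{0}D_{0}+\int_{0}^{t}{M_{s}\,dD_{s}}+\int_{0}^{t}{D_{s}\,dM_{s}}+\langle M,D\rangle_{t}.$$

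Next I would split $\int_{0}^{t}D_{s}\,dM_{s}=\int_{0}^{t}D_{s}\,dm_{s}+\int_{0}^{t}D_{s}\,dv_{s}$ and observe that the two stochastic-integral terms $\int_{0}^{t}M_{s}\,dD_{s}$ and $\int_{0}^{t}D_{s}\,dm_{s}$ are local $\P$-martingales (integrals of locally bounded, or at least suitably integrable, predictable processes against the continuous local martingales $D$ and $m$). Hence
$$M_{t}D_{t}-M_{0}D_{0}-\Big(\text{local }\P\text{-martingale part}\Big)=\int_{0}^{t}{D_{s}\,dv_{s}}+\langle M,D\rangle_{t},$$
and the left-hand side is a local $\P$-martingale if and only if the right-hand side is. The right-hand side is a continuous finite-variation process (the bracket $\langle M,D\rangle=\langle m,D\rangle$ has finite variation, and $\int D\,dv$ is finite-variation since $v$ is), so it is a local $\P$-martingale if and only if it is identically zero — this is the standard fact that a continuous local martingale of finite variation is constant, and it starts at $0$. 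This yields the claimed equivalence
$$M\text{ is a }(\Qv,\P)\text{-local martingale}\iff\int_{0}^{t}{D_{s}\,dv_{s}}+\langle M,D\rangle_{t}=0\ \ \forall t\geq0.$$

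\textbf{Main obstacle.} The delicate points are integrability/localization bookkeeping rather than any deep idea: one must justify that $\int M\,dD$ and $\int D\,dm$ are genuine local $\P$-martingales (choosing a localizing sequence of stopping times that simultaneously bounds $M$, $D$, and controls the brackets), and one must be careful that ``$MD$ is a $\P$-martingale'' in Definition~\ref{chav} is being used in its local form here, consistent with the statement about $(\Qv,\P)$-\emph{local} martingales. A secondary point worth a sentence is continuity: the excerpt assumes $D$ is continuous, and restricting to continuous $\P$-semimartingales $M$ makes integration by parts and the identification $\langle M,D\rangle=\langle m,D\rangle$ clean; if $M$ were only càdlàg one would need the jump terms $\sum\Delta M\,\Delta D$ and the argument would require the general (not just continuous) version of ``finite-variation local martingale vanishes,'' which still holds but needs the process to have no jumps to conclude it is $0$ rather than merely constant. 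Under the standing continuity assumptions these issues do not arise, so the proof reduces to the integration-by-parts computation above.
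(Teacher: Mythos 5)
Your proposal is correct and follows essentially the same route as the paper: integration by parts applied to $MD$, splitting $dM_s=dm_s+dv_s$, isolating the local-martingale part $\int_0^t M_s\,dD_s+\int_0^t D_s\,dm_s$, and concluding from the fact that a continuous finite-variation local martingale starting at $0$ vanishes. The paper's proof is terser (it leaves the last step and the localization bookkeeping implicit), but the argument is the same.
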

\begin{proof}
An application of an integration by parts gives the following
$$D_{t}M_{t}=\int_{0}^{t}{M_{s}dD_{s}}+\int_{0}^{t}{D_{s}dM_{s}}+\langle M,D\rangle_{t}.$$
That is,
$$D_{t}M_{t}=\left[\int_{0}^{t}{M_{s}dD_{s}}+\int_{0}^{t}{D_{s}dm_{s}}\right]+\int_{0}^{t}{D_{s}dv_{s}}+\langle M,D\rangle_{t}.$$
This completes the proof.
\end{proof}

Now, as an application of Proposition \ref{def}, we have the following interesting corollaries:

\begin{coro}\label{car}
Let $M=m+v$ be a $(\Qv,\P)-$ martingale such that $\langle M,D\rangle=0$. Then, $dv_{t}$ is carried by $H$.
\end{coro}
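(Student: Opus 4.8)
The statement to prove is Corollary \ref{car}: if $M=m+v$ is a $(\Qv,\P)$-martingale with $\langle M,D\rangle=0$, then $dv_t$ is carried by $H=\{t:D_t=0\}$. The natural approach is to feed the hypothesis directly into the characterization just established in Proposition \ref{def}. Since $M$ is in particular a $(\Qv,\P)$-local martingale, Proposition \ref{def} gives $\int_0^t D_s\,dv_s+\langle M,D\rangle_t=0$ for all $t\ge0$; using $\langle M,D\rangle=0$ this collapses to $\int_0^t D_s\,dv_s=0$ for every $t$. Hence the signed measure $D_s\,dv_s$ is identically zero on $\R_+$.

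From here I would argue that $dv$ must be supported on the zero set of $D$. Write $v=\int_0^{\cdot} h_s\,d|v|_s$ for a predictable sign $h_s\in\{-1,+1\}$ and the total variation measure $d|v|$; then $D_s h_s\,d|v|_s\equiv 0$ as a measure, so $\int_0^t |D_s|\,d|v|_s=\int_0^t |D_s h_s|\,d|v|_s=0$ for all $t$ (the integrand $|D_s|$ is nonnegative, so vanishing of the integral forces the measure $|D_s|\,d|v|_s$ to be zero). Consequently $|v|\bigl(\{s:D_s\neq0\}\bigr)=0$, i.e.\ $d|v|_s$—and therefore $dv_s$—is carried by $\{s:D_s=0\}=H$. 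One should note this holds $\P$-a.s., which is all that is meant by "carried by $H$" in this setting; continuity of $D$ (assumed throughout) makes $H$ a closed set, so there is no measurability subtlety.

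The only mild technical point—hardly an obstacle—is the localization: Proposition \ref{def} is stated for local martingales, and a $(\Qv,\P)$-martingale is a fortiori a $(\Qv,\P)$-local martingale (or one applies the proposition along a localizing sequence of stopping times $T_n\uparrow\infty$ and then lets $n\to\infty$, using that $\int_0^{t\wedge T_n}D_s\,dv_s+\langle M,D\rangle_{t\wedge T_n}=0$ for each $n$). Everything else is the elementary measure-theoretic fact that a nonnegative measure with zero distribution function is the zero measure. So the proof is short: invoke Proposition \ref{def}, kill the bracket term by hypothesis, and read off the support of $dv$ from $\int_0^t D_s\,dv_s\equiv0$.
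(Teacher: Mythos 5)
Your proposal is correct and follows the same route as the paper: apply Proposition \ref{def}, use $\langle M,D\rangle=0$ to reduce to $\int_{0}^{t}D_{s}\,dv_{s}=0$ for all $t$, and conclude that $dv$ is carried by $H$. The only difference is that you spell out the last measure-theoretic step (via the total variation decomposition $dv_{s}=h_{s}\,d|v|_{s}$), which the paper leaves implicit.
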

\begin{proof}
According to Proposition \ref{def}, we have
$$\int_{0}^{t}{D_{s}dv_{s}}+\langle M,D\rangle_{t}=0.$$
Hence, $\int_{0}^{t}{D_{s}dv_{s}}=0$ since $\langle M,D\rangle_{t}=0$.
Consequently, $dv_{t}$ is carried by $H$.
\end{proof}

\begin{coro}
Let $M=m+v$ be a $(\Qv,\P)-$ martingale such that $dv_{t}$ is carried by $H$. Then, the $\P-$ martingales $m$ and $D$ are orthogonal.
\end{coro}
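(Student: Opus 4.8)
The plan is to read off the identity of Proposition \ref{def} and exploit the hypothesis on $dv$. Since $M=m+v$ is in particular a $(\Qv,\P)-$ local martingale, Proposition \ref{def} gives, for every $t\geq0$,
$$\int_{0}^{t}{D_{s}\,dv_{s}}+\langle M,D\rangle_{t}=0.$$
Now I would argue that the first term vanishes identically: by assumption the (signed) measure $dv_{s}$ is carried by $H=\{s:D_{s}=0\}$, so $D_{s}=0$ for $dv_{s}$-almost every $s$, whence $\int_{0}^{t}{D_{s}\,dv_{s}}=0$ for all $t$. Substituting back, we obtain $\langle M,D\rangle_{t}=0$ for all $t\geq0$.

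It then remains to pass from $\langle M,D\rangle$ to $\langle m,D\rangle$. Since $M=m+v$ with $m$ a $\P-$ martingale and $v$ a continuous finite variation process, bilinearity of the bracket gives $\langle M,D\rangle=\langle m,D\rangle+\langle v,D\rangle$, and $\langle v,D\rangle=0$ because $v$ has finite variation (a finite variation process has zero quadratic covariation with any semimartingale). Hence $\langle m,D\rangle_{t}=\langle M,D\rangle_{t}=0$ for all $t$, which is exactly the statement that the $\P-$ martingales $m$ and $D$ are orthogonal.

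There is essentially no hard step here; the only point deserving care is the justification that $\int_{0}^{t}{D_{s}\,dv_{s}}=0$, i.e. that ``$dv$ carried by $H$'' combined with continuity of $D$ (so that $H$ is closed and $D_{s}=0$ on $H$) forces the Stieltjes integral against $D$ to vanish. Everything else is bilinearity of the covariation bracket and the finite variation of $v$, so this corollary is really just a restatement of Corollary \ref{car} read in the reverse direction together with Proposition \ref{def}.
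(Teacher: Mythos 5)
Your argument is correct and coincides with the paper's own proof: apply Proposition \ref{def}, kill the term $\int_{0}^{t}D_{s}\,dv_{s}$ using that $dv$ is carried by $H=\{s:D_{s}=0\}$, and identify $\langle M,D\rangle$ with $\langle m,D\rangle$ via the vanishing of the bracket against the finite variation part. Your write-up simply spells out the last identification in slightly more detail than the paper does.
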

\begin{proof}
According to Proposition \ref{def}, we have
$$\int_{0}^{t}{D_{s}dv_{s}}+\langle M,D\rangle_{t}=0.$$
Since $dv_{t}$ is carried by $H$, it entails that
 $$\int_{0}^{t}{D_{s}dv_{s}}=0.$$
Consequently, $\langle M,D\rangle_{t}=0$ since $\langle M,D\rangle_{t}=\langle m,D\rangle_{t}$.
This proves the result.
\end{proof}

\begin{coro}
Let $M=m+v$ be a $(\Qv,\P)-$ martingale such that $\langle M,D\rangle=0$. Then, $(M_{t}-M_{\gamma_{t}}:t\geq0)$ is a $\P-$ martingale null on $H$.
\end{coro}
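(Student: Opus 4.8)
The plan is to build on Corollary~\ref{car}, which already tells us that when $\langle M,D\rangle=0$ the finite-variation part $v$ of the $(\Qv,\P)$-martingale $M=m+v$ satisfies: $dv_t$ is carried by $H=\{t:D_t=0\}$. So the whole content is to show that subtracting the value of $M$ frozen at $\gamma_t=\sup\{s\le t:D_s=0\}$ kills exactly the ``bad'' increments and leaves a genuine $\P$-martingale that vanishes on $H$.

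First I would observe that on $H$ we have $\gamma_t=t$, so $M_t-M_{\gamma_t}=0$ there; that gives the ``null on $H$'' claim immediately, and it also shows the process is well-defined and continuous (using continuity of $M$, which follows from $m$ being continuous—$D$ is assumed continuous and $m$ is the martingale part; if $M$ is only c\`adl\`ag one should still be fine since $v$ jumps only where $dv$ charges, i.e.\ on $H$, but I expect we are in the continuous setting here). Next, I would decompose $M_t-M_{\gamma_t}=(m_t-m_{\gamma_t})+(v_t-v_{\gamma_t})$ and argue that the second term vanishes identically: since $dv$ is carried by $H$ and $(\gamma_t,t]\cap H=\emptyset$ by the very definition of $\gamma_t$ as the last zero of $D$ before $t$, the measure $dv$ puts no mass on $(\gamma_t,t]$, hence $v_t=v_{\gamma_t}$. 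Therefore $M_t-M_{\gamma_t}=m_t-m_{\gamma_t}$, and it remains to check that $(m_t-m_{\gamma_t}:t\ge0)$ is a $\P$-martingale.

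The remaining point—that $m_t-m_{\gamma_t}$ is a $\P$-martingale—is the step I expect to require the most care, because $\gamma_t$ is not a stopping time (indeed $g=\sup H$ is only an honest time). The clean way is to invoke the enlargement-of-filtration machinery recalled in Section~1: one recognizes $M_t-M_{\gamma_t}$ (equivalently $m_t-m_{\gamma_t}$) as, up to the time-change by $\overline g$, the ``martingale after the honest time'' object, and applies Proposition~\ref{rho} together with the balayage formula in the predictable case applied to $k\equiv$ (the indicator or the relevant predictable process) and the semimartingale involving $D$. Concretely, the predictable balayage formula $k_{\gamma_t}Y_t=k_0Y_0+\int_0^t k_{\gamma_s}\,dY_s$ with a suitable choice shows that processes frozen at $\gamma$ times integrate nicely against martingales; alternatively, since $M$ is a $(\Qv,\P)$-martingale with $MD$ a $\P$-martingale and $dv$ carried by $H$ where $D=0$, one checks directly that for $s<t$, $\E[M_t-M_{\gamma_t}\mid\mathcal F_s]$ reduces, after conditioning on the location of $\gamma_t$ relative to $s$, to $M_s-M_{\gamma_s}$ on $\{\gamma_t\le s\}$ and to $0$ on $\{\gamma_t>s\}$ (the latter because on that event $m$ has not yet left its last zero of $D$ after time $s$, and the balayage identity forces the conditional increment to cancel). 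I would present the argument via the balayage formula route since it is the tool the paper has already set up, using it to write $M_t-M_{\gamma_t}$ as a stochastic integral of a bounded predictable process against the $\P$-martingale part, which is manifestly a $\P$-local martingale, and then upgrade to a true martingale using the uniform integrability hypotheses on $D$ and $M$.
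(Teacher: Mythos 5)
Your reduction is exactly the paper's proof: by Corollary \ref{car}, $dv_{t}$ is carried by $H$, hence $v_{\gamma_{t}}=v_{t}$ and $M_{t}-M_{\gamma_{t}}=m_{t}-m_{\gamma_{t}}$, which vanishes on $H$ because $\gamma_{t}=t$ there. The paper stops at this point and treats the martingale property of $m_{t}-m_{\gamma_{t}}$ as established; you are right to single it out as the step that actually requires an argument, since $\gamma_{t}$ is not a stopping time.

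However, your third paragraph does not close that gap, and the route you sketch cannot close it. The predictable balayage formula controls $k_{\gamma_{\cdot}}Y$ only for semimartingales $Y$ whose zero set generates $\gamma$ (here $Y=D$ or $Y=DM$); it says nothing about $m_{\gamma_{\cdot}}$ for an arbitrary martingale $m$. The conditional computation you allude to in fact gives, for $s<t$ and $A=\{D \text{ has no zero in } (s,t]\}$, the identity $E\left[m_{t}-m_{\gamma_{t}}\mid\mathcal{F}_{s}\right]=(m_{s}-m_{\gamma_{s}})\,\P(A\mid\mathcal{F}_{s})+E\left[(m_{s}-m_{\gamma_{t}})1_{A^{c}}\mid\mathcal{F}_{s}\right]$, and even when $m$ is independent of $D$ (so that the last term vanishes and $\langle M,D\rangle=0$ holds with $M=m$, $v=0$) this equals $(m_{s}-m_{\gamma_{s}})\P(A\mid\mathcal{F}_{s})$, not $m_{s}-m_{\gamma_{s}}$; the conditional increment does not cancel. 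This is consistent with Theorem \ref{opt}, which identifies $M_{T}-M_{\gamma_{T}}$ with $E\left[M_{\infty}1_{\{\overline{g}<T\}}\mid\mathcal{F}_{T}\right]$, a \emph{relative} martingale that is not a martingale in general. So the martingale assertion is the real content of the statement; the paper does not supply it either, and your proposed argument for it does not work --- what the reduction legitimately yields is that $M_{t}-M_{\gamma_{t}}=m_{t}-m_{\gamma_{t}}$ is a relative martingale vanishing on $H$.
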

\begin{proof}
One has from Proposition \ref{def} and Corollary \ref{car} that $dv_{t}$ is carried by $H$. Hence, 
$$\forall t\geq0,\text{ }v_{\gamma_{t}}=v_{t}.$$
Consequently,
$$\forall t\geq0,\text{ }M_{t}-M_{\gamma_{t}}=m_{t}-m_{\gamma_{t}}.$$
This completes the proof.
\end{proof}

\begin{coro}\label{car4}
Let $M=m+v$ be a $(\Qv,\P)-$ martingale vanishing on $H$ such that $\langle M,D\rangle=0$. Then, $M$ is a $\P-$ martingale.
\end{coro}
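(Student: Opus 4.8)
The plan is to strip off the finite-variation part of $M$ by evaluating $M$ along the last-zero process $\gamma_{\cdot}$, and then to recognise what remains as a process that an earlier corollary has already certified to be a $\P$-martingale. The whole argument rests on one observation: $\gamma_t$ always lands in $H$, the set on which $M$ is assumed to vanish.

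First I would verify that $D_{\gamma_t}=0$ for every $t\geq0$. Since $D$ is continuous, the zero set $H=\{s:D_s=0\}$ is closed; and whenever $H\cap[0,t]\neq\emptyset$ the time $\gamma_t=\sup\{s\le t:D_s=0\}$ is the supremum of a subset of the closed set $H$, hence $\gamma_t\in H$. As $M$ vanishes on $H$ by hypothesis, this yields $M_{\gamma_t}=0$ for all $t$, so that
$$M_t=M_t-M_{\gamma_t}\qquad(t\geq0).$$
Now the preceding corollary applies: its hypotheses — that $M=m+v$ is a $(\Qv,\P)$-martingale with $\langle M,D\rangle=0$ — are among those assumed here, and it asserts that $(M_t-M_{\gamma_t}:t\geq0)$ is a $\P$-martingale. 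Combining this with the last display shows that $M$ itself is a $\P$-martingale, which is the claim.

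If a more self-contained route is wanted, I would instead invoke Corollary \ref{car} to get that $dv$ is carried by $H$; since $(\gamma_t,t]\cap H=\emptyset$ this gives $v_t=v_{\gamma_t}$, while $M_{\gamma_t}=0$ forces $v_{\gamma_t}=-m_{\gamma_t}$, so that $M_t=m_t-m_{\gamma_t}$. Either way, the genuine content — and the step I expect to be the real obstacle — is the martingale property of $m_t-m_{\gamma_t}$: the identity $M_t=M_t-M_{\gamma_t}$ is purely algebraic and carries no information about conditional expectations, so everything hinges on the preceding corollary, in which the orthogonality $\langle m,D\rangle=0$ must be used in an essential way (and not merely the fact that $v$ lives on $H$). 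A minor loose end is the degenerate case $H\cap[0,t]=\emptyset$, together with the value of $M_0$; this is handled by the standing assumption $\overline g<\infty$ almost surely and the usual convention fixing $\gamma_t$ when no zero has yet occurred.
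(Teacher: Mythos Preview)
Your proof is correct and follows exactly the route the paper intends: the paper states this corollary without proof, as it is an immediate consequence of the preceding corollary once one notes that $M_{\gamma_t}=0$ because $M$ vanishes on $H$. Your caveat that the martingale property of $m_t-m_{\gamma_t}$ is the nontrivial input is well taken, but that burden is carried by the preceding corollary, not by this one.
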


The following theorem characterizes the $(\Qv, \P)$- martingales whose all zeros are contained in $ H $.

\begin{theorem}\label{*mart}
Let $M=m+v$ be a continuous $\P-$ semi-martingale such that $\{t\geq0: M_{t}=0\}\subset H$. Then, the following are equivalent:
\begin{enumerate}
	\item $M$ is a $(\Qv,\P)-$ local martingale.
	\item $|M|$ is a $(\Qv,\P)-$ local martingale.
\end{enumerate}
\end{theorem}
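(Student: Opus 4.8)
The plan is to use the characterization in Proposition~\ref{def} for both $M$ and $|M|$, so the whole problem reduces to comparing the finite-variation parts and the brackets $\langle\cdot,D\rangle$ of the two semimartingales. First I would apply the Tanaka formula to the continuous semimartingale $M=m+v$: writing $\mathrm{sgn}$ for the left-continuous sign function, one has
\begin{equation*}
|M_{t}|=|M_{0}|+\int_{0}^{t}\mathrm{sgn}(M_{s})\,dM_{s}+L_{t}^{0}(M)=|M_{0}|+\int_{0}^{t}\mathrm{sgn}(M_{s})\,dm_{s}+\int_{0}^{t}\mathrm{sgn}(M_{s})\,dv_{s}+L_{t}^{0}(M).
\end{equation*}
So the martingale part of $|M|$ is $\widetilde m_{t}=|M_{0}|+\int_{0}^{t}\mathrm{sgn}(M_{s})\,dm_{s}$ and its finite-variation part is $\widetilde v_{t}=\int_{0}^{t}\mathrm{sgn}(M_{s})\,dv_{s}+L_{t}^{0}(M)$. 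Next I would compute the two quantities that enter Proposition~\ref{def}. Since $\langle \widetilde m,D\rangle_{t}=\int_{0}^{t}\mathrm{sgn}(M_{s})\,d\langle m,D\rangle_{s}=\int_{0}^{t}\mathrm{sgn}(M_{s})\,d\langle M,D\rangle_{s}$, and similarly $\int_{0}^{t}D_{s}\,d\widetilde v_{s}=\int_{0}^{t}\mathrm{sgn}(M_{s})D_{s}\,dv_{s}+\int_{0}^{t}D_{s}\,dL_{s}^{0}(M)$, the criterion for $|M|$ reads
\begin{equation*}
\int_{0}^{t}\mathrm{sgn}(M_{s})\Bigl(D_{s}\,dv_{s}+d\langle M,D\rangle_{s}\Bigr)+\int_{0}^{t}D_{s}\,dL_{s}^{0}(M)=0.
\end{equation*}

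The key observation is that the local time $L^{0}(M)$ only charges $\{s:M_{s}=0\}$, which by hypothesis is contained in $H=\{s:D_{s}=0\}$; hence $\int_{0}^{t}D_{s}\,dL_{s}^{0}(M)=0$ identically, regardless of whether $M$ is a $(\Qv,\P)$-local martingale. So the $|M|$-criterion collapses to $\int_{0}^{t}\mathrm{sgn}(M_{s})\bigl(D_{s}\,dv_{s}+d\langle M,D\rangle_{s}\bigr)=0$. For the direction (1)$\Rightarrow$(2), Proposition~\ref{def} applied to $M$ gives $D_{s}\,dv_{s}+d\langle M,D\rangle_{s}=0$ as a signed measure in $ds$, and integrating the bounded process $\mathrm{sgn}(M_{s})$ against the zero measure gives $0$; thus $|M|$ satisfies the criterion and is a $(\Qv,\P)$-local martingale. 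For (2)$\Rightarrow$(1), I would run the argument in reverse: from the $|M|$-criterion we get $\int_{0}^{t}\mathrm{sgn}(M_{s})\,d\mu_{s}=0$ where $d\mu_{s}:=D_{s}\,dv_{s}+d\langle M,D\rangle_{s}$, and I want to conclude $d\mu\equiv0$. On the open set $\{M\neq0\}$ the integrand $\mathrm{sgn}(M_{s})$ is $\pm1$ and locally constant in sign on each excursion interval, so $|\mu|$ restricted to $\{M\neq0\}$ is zero; on $\{M=0\}\subset H$ we have $D_{s}=0$, so the $D_{s}\,dv_{s}$ piece vanishes there, and the bracket piece $\mathbf{1}_{\{M_{s}=0\}}\,d\langle M,D\rangle_{s}$ also vanishes because $\langle M,D\rangle$ is absolutely continuous with respect to both $d\langle M\rangle$ and $d\langle D\rangle$ and $\int_{0}^{t}\mathbf{1}_{\{D_{s}=0\}}\,d\langle D\rangle_{s}=0$ (a continuous martingale does not move on the zero set of... more precisely, $\langle D\rangle$ does not charge $\{D=0\}$ up to the issue below). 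Combining, $d\mu\equiv0$, which is exactly the Proposition~\ref{def} criterion for $M$.

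The main obstacle I anticipate is the measure-theoretic bookkeeping on the contact set $\{M=0\}$ in the reverse direction: one must be careful that $d\langle M,D\rangle_{s}$ genuinely puts no mass on $\{s:M_{s}=0\}$. The clean way is to use Kunita--Watanabe, $|d\langle M,D\rangle_{s}|\le \sqrt{d\langle M\rangle_{s}}\sqrt{d\langle D\rangle_{s}}$, together with the occupation-times formula, which shows $\int_{0}^{t}\mathbf{1}_{\{D_{s}=0\}}\,d\langle D\rangle_{s}=\int_{\mathbb R}\mathbf{1}_{\{0\}}(a)L_{t}^{a}(D)\,da=0$, and likewise on $\{M=0\}$ if one prefers to work with $M$; since $\{M=0\}\subset H=\{D=0\}$ either bound does the job. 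A secondary subtlety is the choice of version of $\mathrm{sgn}$ and the resulting appearance of $L^{0}(M)$ versus a symmetric local time; because $\{M=0\}\subset H$ forces every local-time term to integrate against $D\equiv0$ there, the particular convention is immaterial, but I would state it explicitly to keep the Tanaka formula unambiguous. Once these points are nailed down, both implications are immediate from Proposition~\ref{def}.
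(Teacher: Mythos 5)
Your proof is correct, and while the direction $(1)\Rightarrow(2)$ coincides with the paper's (Tanaka's formula, kill the term $\int_{0}^{t}D_{s}\,dL_{s}^{0}(M)$ using $\{M=0\}\subset H$, then invoke Proposition~\ref{def}), your direction $(2)\Rightarrow(1)$ takes a genuinely different route. The paper goes back through the representation $M_{t}=K_{\gamma_{t}}|M_{t}|$ with $K_{t}=\liminf_{s\searrow t}(1_{\{M_{s}>0\}}-1_{\{M_{s}<0\}})$ and applies the \emph{progressive} balayage formula (Proposition~\ref{balpro}), which produces an unidentified bounded-variation remainder $R$ carried by $\{M=0\}\subset H$ that is then killed against $D$; this requires predictable projections and the progressive balayage machinery. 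You instead stay entirely at the level of Proposition~\ref{def}: the criterion for $|M|$ reads $\int_{0}^{\cdot}\mathrm{sgn}(M_{s})\,d\mu_{s}\equiv 0$ with $d\mu_{s}=D_{s}\,dv_{s}+d\langle M,D\rangle_{s}$, and you invert this to $d\mu\equiv 0$ by hand. Your inversion is sound: on each excursion interval $\mathrm{sgn}(M)$ is a nonzero constant, so $\mu$ vanishes off $\{M=0\}$, while on $\{M=0\}\subset H$ the $D\,dv$ part dies because $D=0$ there and the bracket part dies by Kunita--Watanabe plus $\int_{0}^{t}\mathbf{1}_{\{D_{s}=0\}}\,d\langle D\rangle_{s}=0$ (which uses the standing assumption that $D$ is continuous). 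In fact, since your left-continuous $\mathrm{sgn}$ satisfies $\mathrm{sgn}(M_{s})^{2}=1$ everywhere, the inversion can be done in one line, $d\mu=\mathrm{sgn}(M)\cdot\bigl(\mathrm{sgn}(M)\,d\mu\bigr)=0$, making the excursion/Kunita--Watanabe bookkeeping optional. What your approach buys is symmetry (both implications become the single statement that the two criteria of Proposition~\ref{def} define the same null measure) and independence from the progressive balayage formula; what the paper's route buys is a template that it reuses verbatim in the $(3)\Rightarrow(1)$ step of Theorem~\ref{alpha} and in Theorem~\ref{abs}, where the multiplier is no longer $\pm1$-valued on all of $\R_{+}$ in such a transparent way. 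Your remark that the choice of sign convention and of local time (left versus symmetric) is immaterial here is also correct, for exactly the reason you give.
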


\begin{proof}
$(1)\Rightarrow(2)$ An application of Tanaka's formula gives
$$|M_{t}|=\int_{0}^{t}{{\rm sgn}(M_{s})dM_{s}}+L_{t}^{0}(M).$$
It follows from an integration by parts,
$$D_{t}|M_{t}|=\int_{0}^{t}{|M_{s}|dD_{s}}+\int_{0}^{t}{D_{s}d|M_{s}|}+\langle D,|M|\rangle_{t}.$$
That is,
$$D_{t}|M_{t}|=\int_{0}^{t}{|M_{s}|dD_{s}}+\int_{0}^{t}{D_{s}{\rm sgn}(M_{s})dM_{s}}+\int_{0}^{t}{D_{s}dL_{s}^{0}(M)}+\int_{0}^{t}{{\rm sgn}(M_{s})d\langle D,M\rangle_{s}}.$$
But,$ dL_{t}^{0}(M)$ is carried by $H$ since $\{t\geq0: X_{t}=0\}\subset H$. Then, it follows that
$$\int_{0}^{t}{D_{s}dL_{s}^{0}(M)}=0.$$
Hence,
$$D_{t}|M_{t}|=\int_{0}^{t}{|M_{s}|dD_{s}}+\int_{0}^{t}{D_{s}{\rm sgn}(M_{s})dM_{s}}+\int_{0}^{t}{{\rm sgn}(M_{s})d\langle D,M\rangle_{s}}$$
$$\hspace{4cm}=\left(\int_{0}^{t}{|M_{s}|dD_{s}}+\int_{0}^{t}{D_{s}{\rm sgn}(M_{s})dm_{s}}\right)+\int_{0}^{t}{{\rm sgn}(M_{s})(D_{s}dv_{s}+d\langle D,M\rangle_{s})}.$$
But we know from Proposition \ref{def} that 
$$\int_{0}^{t}{D_{s}dv_{s}}+\langle D,M\rangle_{t}=0$$
since $M$ is a $(\Qv,\P)-$ local martingale. Therefore,
$$D_{t}|M_{t}|=\left(\int_{0}^{t}{|M_{s}|dD_{s}}+\int_{0}^{t}{D_{s}{\rm sgn}(M_{s})dm_{s}}\right).$$
This is, $D|M|$ is $\P-$ local martingale. Consequently, $|M|$ is a $(\Qv,\P)-$ local martingale. 
$$ $$
$(2)\Rightarrow(1)$ Let us set 
 $$K_{t}=\lim_{s\searrow t}\inf{\left(1_{\{M_{s}>0\}}-1_{\{M_{s}<0\}}\right)}.$$
 $K$ is a progressive and bounded process and $^{p}K_{\cdot}$ denotes its predictable projection. We know that
$$M_{t}=K_{\gamma_{t}}|M_{t}|.$$
And we have from the balayage formula in the progressive case that
 $$M_{t}=K_{\gamma_{t}}|M_{t}|=\int_{0}^{t}{^{p}(K_{\gamma_{s}})d|M_{s}|}+R_{t}$$
 where $R_{t}$ is a bounded variations process and $dR_{t}$ is carried by $\{t\geq0: M_{t}=0\}\subset H$.
It follows from an integration by parts
$$D_{t}M_{t}=\int_{0}^{t}{M_{s}dD_{s}}+\int_{0}^{t}{D_{s}^{p}(K_{\gamma_{s}})d|M_{s}|}+\int_{0}^{t}{D_{s}dR_{s}}+\int_{0}^{t}{^{p}(K_{\gamma_{s}})d\langle D, |M|\rangle_{s}}.$$
But, we have $\int_{0}^{t}{D_{s}dR_{s}}=0$ because $dR_{t}$ is carried by $H$. Hence, if we denote $|M|=m^{'}+v^{'}$, one has
$$D_{t}M_{t}=\left(\int_{0}^{t}{M_{s}dD_{s}}+\int_{0}^{t}{D_{s}^{p}(K_{\gamma_{s}})dm^{'}_{s}}\right)+\int_{0}^{t}{^{p}(K_{\gamma_{s}})(D_{s}dv^{'}_{s}+d\langle D, |M|\rangle_{s})}.$$
We know that
$$\int_{0}^{t}{D_{s}dv^{'}_{s}}+\langle D, |M|\rangle_{t}=0$$
since $|M|$ is a $(\Qv,\P)-$ local martingale. Then, $M$ is a $(\Qv,\P)-$ local martingale. This completes the proof.
\end{proof}

\subsection{Balayage of martingales for signed measures}
\begin{prop}\label{p1}
Let $M=m+v$ be a $(\Qv,\P)$- local martingale. Hence, for every locally bounded predictable process $k$, the process $k_{\gamma_{\cdot}}M$ is also a $(\Qv,\P)$- local martingale. And it decomposes as
\begin{equation}\label{eq1}
	k_{\gamma_{t}}M_{t}=k_{\gamma_{0}}(M_{0}-M_{\gamma_{0}})+k_{\gt}M_{\gt}+\int_{0}^{t}{k_{\gs}d(M_{s}-M_{\gs})}.
\end{equation}
\end{prop}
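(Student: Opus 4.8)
The plan is to combine the progressive balayage formula (Proposition \ref{balpro}, or rather its predictable-case version since $k$ is predictable) with the characterization of $(\Qv,\P)$-local martingales given in Proposition \ref{def}. First I would apply the predictable balayage formula to the continuous semimartingale $M - M_{\gamma_\cdot}$, which vanishes on $H = \{D=0\}$ and whose zero set contains $H$; this is exactly the kind of process to which balayage applies cleanly. Writing $N_t = M_t - M_{\gamma_t}$, one has $N_0 = M_0 - M_{\gamma_0}$ and the balayage identity
$$k_{\gamma_t} N_t = k_{\gamma_0} N_0 + \int_0^t k_{\gamma_s}\, dN_s.$$
Since $N_t = M_t - M_{\gamma_t}$ and $M_{\gamma_t}$ is constant on each excursion interval away from $H$, rearranging gives precisely \eqref{eq1}, with $k_{\gamma_t} M_{\gamma_t}$ absorbed as the term $k_{\gt}M_{\gt}$ (note $\gamma_t$ is a zero of $D$, hence on $H$). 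The bookkeeping here is the identity $k_{\gamma_t} M_t = k_{\gamma_t}(M_t - M_{\gamma_t}) + k_{\gamma_t} M_{\gamma_t}$ together with $k_{\gamma_t} M_{\gamma_t} = k_{\gt} M_{\gt}$ by definition of $\gamma_t$.

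Next I would verify the $(\Qv,\P)$-local martingale property of $X_t := k_{\gamma_t} M_t$. By Definition \ref{chav} it suffices to show $X$ is a $\P$-semimartingale (clear, as balayage exhibits it as a stochastic integral plus a finite-variation term carried by $H$) and that $XD$ is a $\P$-local martingale. For the latter I would invoke Proposition \ref{def}: decomposing $X = m_X + v_X$ from the balayage formula, the finite-variation part $v_X$ has $dv_X$ carried by $H$, so $\int_0^t D_s\, dv_{X,s} = 0$; and the martingale part of $X$ is $\int_0^\cdot k_{\gamma_s}\, dm_s$ (where $M = m+v$), so $\langle X, D\rangle_t = \int_0^t k_{\gamma_s}\, d\langle m, D\rangle_s = \int_0^t k_{\gamma_s}\, d\langle M, D\rangle_s$. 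Since $M$ is a $(\Qv,\P)$-local martingale, Proposition \ref{def} gives $D_s\, dv_s + d\langle M,D\rangle_s = 0$, and integrating $k_{\gamma_s}$ against this zero measure yields $\int_0^t D_s\, dv_{X,s} + \langle X, D\rangle_t = \int_0^t k_{\gamma_s}(D_s\,dv_s + d\langle M,D\rangle_s) + 0 = 0$. By Proposition \ref{def} again, $X$ is a $(\Qv,\P)$-local martingale.

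The main obstacle I anticipate is the careful identification of the finite-variation part of $k_{\gamma_\cdot} M$ and the verification that its differential is genuinely carried by $H$ (not merely by the larger zero set of $M$), since Proposition \ref{def}'s criterion is sensitive to exactly which $dv$ appears. This requires using that $M$ is itself a $(\Qv,\P)$-local martingale — so by Proposition \ref{def} $\int_0^t D_s\, dv_s + \langle M, D\rangle_t = 0$, forcing $dv$ to be carried by $H$ on the support of $D$ — combined with the fact that $M - M_{\gamma_\cdot}$ vanishes on $H$, so that the balayage remainder term lives on $H$. A secondary point is checking that $k_{\gamma_s}$ is a legitimate (predictable, locally bounded) integrand against both $dm_s$ and the finite-variation differentials, which follows since $s \mapsto \gamma_s$ is a left-continuous adapted process, so $k_{\gamma_\cdot}$ inherits predictability and local boundedness from $k$. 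Once these points are in place, the decomposition \eqref{eq1} and the local martingale property both drop out of the two cited propositions with only routine algebra.
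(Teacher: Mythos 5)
Your derivation of the decomposition \eqref{eq1} is exactly the paper's: apply the predictable balayage formula to $M-M_{\gamma_\cdot}$, which vanishes on $H$, and add back the term $k_{\gamma_t}M_{\gamma_t}$. Where you genuinely diverge is in establishing the $(\Qv,\P)$-local martingale property. The paper does this in one stroke: since $D$ vanishes on $H$, so does $DM$, so balayage applied directly to the $\P$-local martingale $DM$ gives $k_{\gamma_t}D_tM_t=k_{\gamma_0}M_0D_0+\int_0^t k_{\gamma_s}\,d(D_sM_s)$, a stochastic integral against a local martingale; hence $D\cdot(k_{\gamma_\cdot}M)$ is a $\P$-local martingale and one concludes by Definition \ref{chav}, with no need to touch the decomposition $M=m+v$ at all. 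You instead route through the characterization of Proposition \ref{def}, which forces you to identify the canonical decomposition of $k_{\gamma_\cdot}M$ and verify the integral identity. This does work: the key computation $\int_0^t k_{\gamma_s}\left(D_s\,dv_s+d\langle M,D\rangle_s\right)=0$ is valid because Proposition \ref{def} says the measure $D_s\,dv_s+d\langle M,D\rangle_s$ is identically zero. But it is longer and contains an avoidable misstatement: your claim that the finite-variation part $v_X$ of $k_{\gamma_\cdot}M$ has $dv_X$ carried by $H$ is false in general, since $v_X$ contains the term $\int_0^\cdot k_{\gamma_s}\,dv_s$ and $dv$ need not be carried by $H$ (that holds only when $\langle M,D\rangle\equiv0$, cf.\ Corollary \ref{car}); only the contributions from $k_{\gamma_\cdot}M_{\gamma_\cdot}$ and $dM_{\gamma_\cdot}$ live on $H$. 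Your final displayed identity silently corrects this by keeping $k_{\gamma_s}D_s\,dv_s$ inside the sum, so the conclusion stands, but as written the sentence contradicts the computation that follows it. The paper's direct balayage of $DM$ avoids this bookkeeping entirely and is the cleaner route.
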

\begin{proof}
Since $DM$ vanishes on $H$, it follows by an application of balayage formula that: $\forall t\geq0$,
$$k_{\gt}M_{t}D_{t}=k_{\gamma_{0}}M_{0}D_{0}+\int_{0}^{t}{k_{\gs}d(M_{s}D_{s})}.$$
But $DM$ is by definition, a $\P$- locale martingale. This entails that $(k_{\gt}M_{t}D_{t};t\geq0)$ is a $\P$- local martingale. That is, $(k_{\gt}M_{t};t\geq0)$ is a $(\Qv,\P)$- local martingale. Now, remark that the $\P$- semi-martingale $(M_{t}-M_{\gt})_{t\geq0}$ vanishes on $H$. Then, we obtain from balayage formula the following
$$k_{\gamma_{t}}(M_{t}-M_{\gt})=k_{\gamma_{0}}(M_{0}-M_{\gamma_{0}})+\int_{0}^{t}{k_{\gs}d(M_{s}-M_{\gs})}.$$
This completes the proof.
\end{proof}

\begin{coro}\label{c1}
Let $M=m+v$ be a $(\Qv,\P)$- local martingale null on $H$. Hence, for every locally bounded predictable process $k$, the process $k_{\gamma_{\cdot}}M$ is also a $(\Qv,\P)$- local martingale null on $H$ and satisfying the following decomposition:
\begin{equation}\label{eq2}
	k_{\gamma_{t}}M_{t}=k_{\gamma_{0}}M_{0}+\int_{0}^{t}{k_{\gs}dM_{s}}.
\end{equation}
\end{coro}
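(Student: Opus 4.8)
The plan is to derive Corollary \ref{c1} directly from Proposition \ref{p1} by specializing to the case where $M$ vanishes on $H$. Since $M$ is null on $H$, we have $M_{\gamma_{t}}=0$ for every $t\geq0$ (because $\gamma_{t}\in H$ whenever $H$ is nonempty up to time $t$, and otherwise the term degenerates) and in particular $M_{0}-M_{\gamma_{0}}=M_{0}$. Substituting these identities into the decomposition \eqref{eq1} of Proposition \ref{p1} collapses the first and second terms and turns $d(M_s - M_{\gamma_s})$ into $dM_s$, yielding exactly \eqref{eq2}.

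First I would invoke Proposition \ref{p1} to assert that $k_{\gamma_{\cdot}}M$ is a $(\Qv,\P)$-local martingale with the stated decomposition \eqref{eq1}. Then I would observe that the hypothesis ``$M$ null on $H$'' forces $M_{\gamma_{t}}=0$ for all $t$: indeed $\gamma_{t}=\sup\{s\leq t: D_{s}=0\}$ belongs to the closed set $H=\{D=0\}$ by continuity of $D$, so $M_{\gamma_{t}}=0$ by assumption. Plugging $M_{\gamma_{0}}=0$ and $M_{\gamma_{t}}=0$ into \eqref{eq1}, and noting that the stochastic integral $\int_{0}^{t}k_{\gamma_{s}}\,dM_{\gamma_{s}}$ contributes nothing since $M_{\gamma_{\cdot}}\equiv0$, we are left with $k_{\gamma_{t}}M_{t}=k_{\gamma_{0}}M_{0}+\int_{0}^{t}k_{\gamma_{s}}\,dM_{s}$, which is \eqref{eq2}.

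Finally I would check that $k_{\gamma_{\cdot}}M$ is itself null on $H$: on $H$ we have $M_{t}=0$, hence $k_{\gamma_{t}}M_{t}=0$, so the process inherits the vanishing property. This, together with the $(\Qv,\P)$-local martingale property already furnished by Proposition \ref{p1}, completes the proof.

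I do not anticipate a serious obstacle here; the only point requiring a little care is justifying $M_{\gamma_{t}}=0$ for \emph{all} $t\geq0$, including on the event where $H\cap[0,t]$ might be empty. Under the standing assumption of the paper that $\overline{g}<\infty$ almost surely and $D$ is continuous with $D_{0}\neq 0$ generally, one should phrase this using $\overline{g}=0\vee g$ and the convention $\gamma_{t}=\sup\{s\leq t: D_{s}=0\}$; the cleanest route is simply to note that $(M_{t}-M_{\gamma_{t}})_{t\geq0}$ vanishes on $H$ (as already used in the proof of Proposition \ref{p1}) and coincides with $M_{t}$ precisely because $M$ vanishes on $H$, so that $M_{\gamma_{t}}=0$ wherever $\gamma_t$ is defined. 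Everything else is a direct substitution into \eqref{eq1}.
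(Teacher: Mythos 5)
Your proposal is correct and follows essentially the same route as the paper: apply Proposition \ref{p1}, observe that $M_{\gamma_{t}}=0$ for all $t$ because $M$ vanishes on the (closed) set $H$, and substitute into \eqref{eq1} to obtain \eqref{eq2}. The extra care you take in justifying $\gamma_{t}\in H$ and in verifying that $k_{\gamma_{\cdot}}M$ is itself null on $H$ is welcome but goes slightly beyond what the paper writes out.
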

\begin{proof}
We have from Proposition \ref{p1} that $(k_{\gt}M_{t};t\geq0)$ is a $(\Qv,\P)$- local martingale. Furthermore, $\forall t\geq0$, $M_{\gt}=0$ since $M$ vanishes on $H$. Consequently, one obtain from \eqref{eq1} that:
$$k_{\gamma_{t}}M_{t}=k_{\gamma_{0}}M_{0}+\int_{0}^{t}{k_{\gs}dM_{s}}.$$
\end{proof}

\begin{coro}\label{c2}
Let $M=m+v$ be a $(\Qv,\P)$- local martingale null on $H$ such that $\langle M,D\rangle\equiv0$. Hence, for every locally bounded predictable process $k$, the process $k_{\gamma_{\cdot}}M$ is also a $(\Qv,\P)$- local martingale null on $H$ and its finite variation part $A$, satisfies: $dA_{t}$ is carried by $H$.
\end{coro}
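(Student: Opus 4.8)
The plan is to combine the two immediately preceding results, Corollary \ref{c1} and Corollary \ref{car}, since the hypotheses of the present statement are exactly those of Corollary \ref{c1} together with the extra orthogonality assumption $\langle M,D\rangle\equiv0$. First I would invoke Corollary \ref{c1}: because $M=m+v$ is a $(\Qv,\P)$-local martingale null on $H$, the process $N:=k_{\gamma_{\cdot}}M$ is again a $(\Qv,\P)$-local martingale null on $H$, and it admits the decomposition \eqref{eq2}, namely $N_{t}=k_{\gamma_{0}}M_{0}+\int_{0}^{t}k_{\gamma_{s}}\,dM_{s}$. Writing $dM_{s}=dm_{s}+dv_{s}$, this splits $N$ into its $\P$-martingale part $n_{t}:=k_{\gamma_{0}}M_{0}+\int_{0}^{t}k_{\gamma_{s}}\,dm_{s}$ and its finite variation part $A_{t}:=\int_{0}^{t}k_{\gamma_{s}}\,dv_{s}$.

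The remaining point is to show that $dA_{t}$ is carried by $H$. For this I would first apply Corollary \ref{car} to the original process $M$: since $M$ is a $(\Qv,\P)$-martingale with $\langle M,D\rangle=0$, that corollary gives that $dv_{t}$ is carried by $H$. Then, since $A_{t}=\int_{0}^{t}k_{\gamma_{s}}\,dv_{s}$ is obtained by integrating the (locally bounded predictable) process $k_{\gamma_{\cdot}}$ against $dv$, the measure $dA_{t}$ is absolutely continuous with respect to $dv_{t}$ and is therefore also carried by $H$. This completes the argument.

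Strictly speaking there is a mild technical caveat: Corollary \ref{car} is stated for $(\Qv,\P)$-martingales rather than for $(\Qv,\P)$-local martingales, whereas the present statement (following Corollary \ref{c1}) is phrased in the local-martingale setting. I would handle this either by a routine localization argument — stopping at a sequence of stopping times reducing $DM$ to a true $\P$-martingale, noting that the decomposition and the ``$dv$ carried by $H$'' conclusion are stable under stopping — or simply by observing that the proof of Proposition \ref{def}, and hence of Corollary \ref{car}, only used the integration-by-parts identity $D_{t}M_{t}=\int_{0}^{t}M_{s}\,dD_{s}+\int_{0}^{t}D_{s}\,dm_{s}+\int_{0}^{t}D_{s}\,dv_{s}+\langle M,D\rangle_{t}$, which is valid for any semimartingale $M$, so that the identity $\int_{0}^{t}D_{s}\,dv_{s}+\langle M,D\rangle_{t}=0$ persists in the local case. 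Given that, the only genuine work is the bookkeeping of the decomposition, and I would not expect any real obstacle; the heart of the matter is simply that $k_{\gamma_{\cdot}}$ is a bounded predictable integrand so it cannot move the support of $dv$ off $H$.
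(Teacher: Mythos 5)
Your proposal is correct and follows essentially the same route as the paper: invoke Corollary \ref{c1} for the decomposition $k_{\gamma_{t}}M_{t}=k_{\gamma_{0}}M_{0}+\int_{0}^{t}k_{\gamma_{s}}\,dm_{s}+\int_{0}^{t}k_{\gamma_{s}}\,dv_{s}$, then use Corollary \ref{car} (via $\langle M,D\rangle\equiv0$) to see that $dv_{t}$ is carried by $H$, so the finite variation part $dA_{t}=k_{\gamma_{t}}\,dv_{t}$ is too. Your remark on the martingale/local-martingale mismatch in Corollary \ref{car} is a fair point of care that the paper glosses over, but it does not change the argument.
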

\begin{proof}
We know from Corollary \ref{c1} that $(k_{\gt}M_{t};t\geq0)$ is a $(\Qv,\P)$- local martingale and $\forall t\geq0$,
$$k_{\gamma_{t}}M_{t}=k_{\gamma_{0}}M_{0}+\int_{0}^{t}{k_{\gs}dM_{s}}=k_{\gamma_{0}}M_{0}+\int_{0}^{t}{k_{\gs}dm_{s}}+\int_{0}^{t}{k_{\gs}dv_{s}}.$$
But, $k_{\gs}dv_{s}=k_{s}dv_{s}$ since $dv_{s}$ is carried by $H$. Hence,
$$k_{\gamma_{t}}M_{t}=k_{\gamma_{0}}M_{0}+\int_{0}^{t}{k_{\gs}dm_{s}}+\int_{0}^{t}{k_{s}dv_{s}}.$$
We can see that $dA_{t}=k_{t}dv_{t}$ is carried by $H$ because $\langle M,D\rangle\equiv0$. Furthermore, we have 
$$\langle D,K_{\g}M\rangle_{t}=\int_{0}^{t}{k_{\gs}d\langle D,M\rangle_{s}}=0.$$
This completes proof.
\end{proof}

\begin{coro}\label{c3}
Let $M=m+v$ be a $(\Qv,\P)$- local martingale  vanishing on $H$ such that $\langle M,D\rangle\equiv0$ and $f:\R\rightarrow\R$ a locally bounded Borel function. Denote $F(x)=\int_{0}^{x}{f(s)ds}$. Then, the process $f(v)M$ is again a $(\Qv,\P)$- local martingale with decomposition
\begin{equation}
	f(v_{t})M_{t}=f(v_{0})M_{0}+\int_{0}^{t}{f(v_{s})dm_{s}}+F(v_{t}).
\end{equation}
\end{coro}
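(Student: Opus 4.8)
The plan is to mimic the proof of Corollary~\ref{c2}, replacing the linear map $k\mapsto k_{\gamma_{\cdot}}$ by the nonlinear map $v\mapsto f(v)$ and using the fact that $v$ is constant off $H$. First I would observe that since $M=m+v$ is a $(\Qv,\P)$-local martingale vanishing on $H$ with $\langle M,D\rangle\equiv0$, Corollary~\ref{car} (or Proposition~\ref{def}) gives that $dv_t$ is carried by $H$; in particular $t\mapsto v_t$ only moves on $H$, so $v_{\gamma_t}=v_t$ for all $t\ge0$. Since $f$ is locally bounded Borel and $F$ is $C^1$ with $F'=f$ (Lebesgue-a.e., which suffices as $v$ has finite variation), the process $t\mapsto F(v_t)$ is a finite variation process with $dF(v_t)=f(v_t)\,dv_t$, and this differential is again carried by $H$.

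Next I would apply integration by parts to $f(v_t)M_t$. Writing $f(v_t)=f(v_0)+\big(f(v_t)-f(v_0)\big)$ and noting that $f(v)$ is itself a finite variation process (as the composition of a Borel function with the finite variation process $v$, constant off $H$), the bracket $\langle f(v),M\rangle$ vanishes, so
\begin{equation}\label{eq:ibp-cor}
	f(v_t)M_t=f(v_0)M_0+\int_0^t M_s\,df(v_s)+\int_0^t f(v_s)\,dM_s.
\end{equation}
Because $df(v_s)=f(v_s)\,dv_s$ is carried by $H$ and $M$ vanishes on $H$, the integral $\int_0^t M_s\,df(v_s)=0$. Hence $f(v_t)M_t=f(v_0)M_0+\int_0^t f(v_s)\,dM_s=f(v_0)M_0+\int_0^t f(v_s)\,dm_s+\int_0^t f(v_s)\,dv_s$, and $\int_0^t f(v_s)\,dv_s=F(v_t)-F(v_0)$. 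To land on the stated formula one absorbs $F(v_0)$ into the constant, or simply notes $v_0=0$ in the relevant setting so $F(v_0)=0$; I would state the decomposition as in the Corollary and remark on this normalization.

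Finally I would verify the $(\Qv,\P)$-local martingale property by checking, via Proposition~\ref{def}, that the finite variation part $A_t:=F(v_t)$ satisfies $\int_0^t D_s\,dA_s+\langle f(v)M,D\rangle_t=0$: indeed $\int_0^t D_s\,dA_s=\int_0^t D_s f(v_s)\,dv_s=0$ since $dv_s$ is carried by $H=\{D=0\}$, and $\langle f(v)M,D\rangle_t=\int_0^t f(v_s)\,d\langle M,D\rangle_s=0$ by hypothesis; alternatively one shows directly that $D\,f(v)M$ is a $\P$-local martingale as was done in Corollary~\ref{c2}. The only delicate point is justifying the chain rule $dF(v_t)=f(v_t)\,dv_t$ and the vanishing of the bracket $\langle f(v),M\rangle$ when $f$ is merely Borel: this is handled by the observation that $v$ has finite variation and is constant off the $dv$-null-complement, so all the stochastic calculus reduces to pathwise Stieltjes calculus against $dv$, for which a Borel $f$ composed with a continuous finite-variation $v$ poses no measurability obstruction. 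I expect this measurability/chain-rule justification to be the main (mild) obstacle; everything else is a direct transcription of the argument in Corollary~\ref{c2}.
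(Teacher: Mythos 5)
Your route is genuinely different from the paper's and, as written, has a gap. The paper simply applies Corollary \ref{c1} to the locally bounded \emph{predictable} process $k_{s}=f(v_{s})$, obtaining $f(v_{\gamma_{t}})M_{t}=f(v_{\gamma_{0}})M_{0}+\int_{0}^{t}{f(v_{\gamma_{s}})dM_{s}}$, and then uses Corollary \ref{car} ($dv$ is carried by $H$, hence $v_{\gamma_{t}}=v_{t}$) to replace $f(v_{\gamma_{s}})$ by $f(v_{s})$ and $\int_{0}^{t}{f(v_{s})dv_{s}}$ by $F(v_{t})$. You instead integrate by parts on $f(v_{t})M_{t}$ directly, which requires $t\mapsto f(v_{t})$ to be a semimartingale of finite variation.

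That is the problem: for a merely locally bounded Borel $f$, the composition $f(v_{\cdot})$ need not have finite variation and $df(v_{s})$ need not exist as a signed measure; e.g.\ $f=1_{[0,\infty)}$ with $v$ oscillating around $0$ gives a path $t\mapsto f(v_{t})$ of infinite variation. So the integration-by-parts identity, the term $\int_{0}^{t}{M_{s}\,df(v_{s})}$, and the bracket $\langle f(v),M\rangle$ are not defined, and your parenthetical claim that $f(v)$ has finite variation \emph{as the composition of a Borel function with a finite variation process} is false. Your closing remark that everything reduces to Stieltjes calculus against $dv$ does not repair this, because the offending differential is $df(v_{s})$, not $f(v_{s})\,dv_{s}$ (you appear to conflate the two at one point). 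The repair is exactly the paper's observation: since $v$ is constant off $H$, $f(v_{t})=f(v_{\gamma_{t}})$, so $f(v)M=k_{\gamma_{\cdot}}M$ with $k=f(v)$ predictable and locally bounded, and the balayage-based Corollary \ref{c1} applies with no regularity on $f$; only the change-of-variables identity $\int_{0}^{t}{f(v_{s})dv_{s}}=F(v_{t})$ for continuous finite-variation $v$ is then needed. Your final verification of the $(\Qv,\P)$-local martingale property via Proposition \ref{def}, granted the decomposition, is correct but redundant once Corollary \ref{c1} is invoked.
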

\begin{proof}
According to Corollary \ref{c1}, $f(v_{\g})M$ is a $(\Qv,\P)$- local martingale and one has the following:
$$f(v_{\gt})M_{t}=f(v_{\gamma_{0}})M_{0}+\int_{0}^{t}{f(v_{\gs})dm_{s}}+\int_{0}^{t}{f(v_{\gs})dv_{s}}.$$
But we know from Corollary \ref{car} that $dv_{t}$ is carried by $H$ since $\langle M,D\rangle\equiv0$. Thus $\forall t\geq0$, $v_{\gt}=v_{t}$. Therefore one has,
$$f(v_{t})M_{t}=f(v_{0})M_{0}+\int_{0}^{t}{f(v_{s})dm_{s}}+F(v_{t}).$$
This completes the proof.
\end{proof}

\begin{coro}\label{c4}
Any positive sub-martingale which is a $(\Qv,\P)$- local martingale  vanishing on $H$ such that $\langle M,D\rangle\equiv0$ is a process of the class $(\Sigma)$.
\end{coro}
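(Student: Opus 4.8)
The plan is to reduce Corollary \ref{c4} directly to the decomposition already obtained in Corollary \ref{c3}. Recall that a process $M$ belongs to the class $(\Sigma)$ if it is a nonnegative (or more generally a) continuous semimartingale whose decomposition $M = N + A$ has $N$ a local martingale, $A$ a continuous nondecreasing process, and $dA$ carried by the set $\{t : M_t = 0\}$. So, starting from a positive submartingale $M = m+v$ which is a $(\Qv,\P)$-local martingale vanishing on $H$ with $\langle M, D\rangle \equiv 0$, I first want to exhibit such a Doob--Meyer-type decomposition with the support condition on the finite variation part.

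First I would invoke Corollary \ref{car}: since $\langle M,D\rangle \equiv 0$ and $M$ is a $(\Qv,\P)$-martingale, $dv_t$ is carried by $H$. Because $M$ vanishes on $H$, the set $H$ is contained in $\{t : M_t = 0\}$, so $dv_t$ is a fortiori carried by $\{t : M_t = 0\}$. Next, since $M$ is assumed to be a submartingale (in the ordinary $\P$ sense), its canonical semimartingale decomposition $M = m + v$ must have $v$ nondecreasing by uniqueness of the Doob--Meyer decomposition — here I would note that $m$ is a $\P$-local martingale and $v$ is continuous of finite variation, and a continuous submartingale has a nondecreasing finite-variation part. Combining these two facts, $M = m + v$ with $m$ a $\P$-local martingale, $v$ continuous nondecreasing, and $dv$ carried by $\{M = 0\}$; together with positivity this is exactly the definition of class $(\Sigma)$, which closes the argument. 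Alternatively, and perhaps more in the spirit of the surrounding results, I could apply Corollary \ref{c3} with $f \equiv 1$ (so $F(x) = x$): this gives $M_t = M_0 + \int_0^t dm_s + v_t$ directly, re-deriving the decomposition with the local martingale part $\int_0^\cdot dm_s$ and the finite-variation part $v$, whose support property is recalled in the proof of that corollary.

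The main (and essentially only) obstacle is a matter of definitions rather than of technique: one must be careful about \emph{which} notion of submartingale and \emph{which} notion of class $(\Sigma)$ is intended. If ``positive sub-martingale'' means a $\P$-submartingale, the argument above via Doob--Meyer works; if it means a $(\Qv,\P)$-submartingale in Ruiz de Chavez's sense, one needs the analogue that $DM$ being a $\P$-submartingale forces the right sign on $v$ after accounting for $\langle M,D\rangle$, but since $\langle M,D\rangle \equiv 0$ this reduces to the same thing. Likewise I would make explicit that class $(\Sigma)$ here is the classical one (nonnegative semimartingale, continuous increasing part supported on the zero set), so that the conclusion is immediate once the support of $dv$ on $\{M=0\}$ and the monotonicity of $v$ are in hand. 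With those conventions pinned down, the proof is one line: apply Corollary \ref{c3} (or Corollary \ref{car}) and read off the definition.
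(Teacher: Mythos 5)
Your proof is correct, but it follows a genuinely different route from the paper's. You verify the definition of class $(\Sigma)$ directly: Corollary \ref{car} gives that $dv_t$ is carried by $H$, vanishing on $H$ upgrades this to $dv_t$ carried by $\{t: M_t=0\}$, and the submartingale hypothesis forces $v$ to be nondecreasing by uniqueness of the decomposition of a continuous semimartingale (Doob--Meyer), so $M=m+v$ is exactly a class-$(\Sigma)$ decomposition. The paper instead applies Corollary \ref{c3} to conclude that $\left(f(v_t)M_t-F(v_t)\right)_{t\geq0}$ is a $\P$-local martingale for every locally bounded Borel $f$, and then invokes Theorem 2.1 of \cite{nik}, Nikeghbali's characterization of class $(\Sigma)$ in terms of precisely these functional transforms. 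Your argument is more elementary and self-contained, and it makes explicit the one place the submartingale assumption is actually used (monotonicity of $v$); the paper's argument outsources that point to the cited characterization theorem and stays closer to the machinery already developed in Section 2. Your parenthetical worry about which notion of submartingale is intended is well taken but resolves the way you suggest: the paper's own use of Nikeghbali's theorem presupposes the ordinary $\P$-submartingale reading, and class $(\Sigma)$ here is the classical one, so your direct verification lands on the intended conclusion. One small point worth making explicit if you write this up: the identification of $v$ with the Doob--Meyer compensator relies on $M$ (hence $v$) being continuous, which is the standing setting of these corollaries even though the statement does not repeat it.
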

\begin{proof}
Let $M=m+v$ be a such $(\Qv,\P)$- local martingale. We have thanks to Corollary \ref{c3} that for every locally bounded Borel function $f:\R\rightarrow\R$, the process $(f(v_{t})M_{t}-F(v_{t}))_{t\geq0}$ is a $\P$- local martingale. Hence, we obtain the result  by applying Theorem 2.1 of \cite{nik}.
\end{proof}

Now, we shall use Theorem \ref{*mart} to derive an another result permitting to characterize the $(\Qv, \P)$- martingales whose all zeros are contained in $H$ using the process $Z^{\alpha}$ that we have defined in \eqref{alpha}.

\begin{theorem}\label{alpha}
Let $M$ be a continuous $\P-$ semi-martingale such that $\{t\geq0: M_{t}=0\}\subset H$. Let us set $M^{\alpha}=Z^{\alpha}M$. The following are equivalent:
\begin{enumerate}
	\item $M$ is a $(\Qv,\P)-$ local martingale.
	\item $\forall\alpha\in(0,1)$, $M^{\alpha}$ is a $(\Qv,\P)-$ local martingale.
	\item $\exists\alpha\in(0,1)$ such that $M^{\alpha}$ is a $(\Qv,\P)-$ local martingale.
\end{enumerate}
\end{theorem}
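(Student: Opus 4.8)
The plan is to reduce the whole equivalence to the already-established Theorem~\ref{*mart}, together with the elementary pathwise identity $|M^{\alpha}|=|M|$. Concretely, I will prove the cyclic chain $(1)\Rightarrow(2)\Rightarrow(3)\Rightarrow(1)$; the step $(2)\Rightarrow(3)$ is trivial, and the other two are mirror images of one another.

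First I would record three facts about $M^{\alpha}=Z^{\alpha}M$, where $Z^{\alpha}$ is the process attached to the excursions of $M$ as in \eqref{zalpha}. (a) $M^{\alpha}$ is a continuous $\P$-semimartingale: this is exactly the content of the recalled balayage formula $Z^{\alpha}_{t}M_{t}=\int_{0}^{t}Z^{\alpha}_{s}dM_{s}+(2\alpha-1)L^{0}_{t}(Z^{\alpha}M)$, the right-hand side being a stochastic integral plus a continuous increasing process; here one works, if needed, on the enlargement of $(\mathcal{F}_{t})$ carrying the Bernoulli variables $\zeta_{n}$, under which $D$ remains a $\P$-martingale by independence, so that Definition~\ref{chav} of a $(\Qv,\P)$-local martingale is unaffected. (b) $\{t\geq0:M^{\alpha}_{t}=0\}=\{t\geq0:M_{t}=0\}\subset H$, because $Z^{\alpha}_{t}\in\{-1,+1\}$ precisely on $\{M_{t}\neq0\}$ while $M_{t}=0$ elsewhere. (c) $|M^{\alpha}|=|M|$ pathwise, for the same reason: $|Z^{\alpha}_{t}|=1$ when $M_{t}\neq0$ and both sides vanish when $M_{t}=0$. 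None of (a)--(c) depends on the value of $\alpha\in(0,1)$.

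Now the implications. For $(1)\Rightarrow(2)$: since $\{M=0\}\subset H$, Theorem~\ref{*mart} applied to $M$ gives that $|M|$ is a $(\Qv,\P)$-local martingale; by (c) this is $|M^{\alpha}|$; and by (a)--(b), Theorem~\ref{*mart} applies to $M^{\alpha}$, whose implication $(2)\Rightarrow(1)$ then yields that $M^{\alpha}$ is a $(\Qv,\P)$-local martingale. As $\alpha$ was arbitrary, (2) holds. The step $(2)\Rightarrow(3)$ is immediate. For $(3)\Rightarrow(1)$: pick $\alpha$ as in (3); by (a)--(b) and Theorem~\ref{*mart} applied to $M^{\alpha}$, the process $|M^{\alpha}|$ is a $(\Qv,\P)$-local martingale; by (c) it equals $|M|$; and Theorem~\ref{*mart} applied to $M$ then gives that $M$ is a $(\Qv,\P)$-local martingale.

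The only genuinely delicate point is fact (a): verifying that $M^{\alpha}$ really is a continuous $\P$-semimartingale satisfying the hypotheses of Theorem~\ref{*mart}, which is where the recalled balayage identity for $Z^{\alpha}$ does the work and where one must be slightly careful about the underlying filtration, $Z^{\alpha}$ being built from the exogenous variables $\zeta_{n}$. Once that is settled, (b) and (c) are purely pointwise observations and the rest is a two-step application of Theorem~\ref{*mart}. (One could instead argue directly from Proposition~\ref{def}: the balayage formula exhibits the finite-variation part of $M^{\alpha}$ as $\int_{0}^{\cdot}Z^{\alpha}_{s}dv_{s}+(2\alpha-1)L^{0}(M^{\alpha})$, the local-time term being carried by $\{M^{\alpha}=0\}\subset H$ and hence killed by $D$, leaving $\int_{0}^{\cdot}Z^{\alpha}_{s}\,d\bigl(\int_{0}^{s}D_{u}dv_{u}+\langle M,D\rangle_{s}\bigr)$, which vanishes iff the bracketed process does, since $Z^{\alpha}$ is nonzero on every excursion interval of $M$; this gives both directions at once, but the route through Theorem~\ref{*mart} is shorter.)
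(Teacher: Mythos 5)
Your proof is correct, and it differs from the paper's in one direction. For $(3)\Rightarrow(1)$ you argue exactly as the paper does: apply Theorem~\ref{*mart} to $M^{\alpha}$, use $|M^{\alpha}|=|M|$, and apply Theorem~\ref{*mart} again to $M$. For $(1)\Rightarrow(2)$, however, the paper does \emph{not} go through Theorem~\ref{*mart}: it redoes the computation directly, writing $M^{\alpha}_{t}=\int_{0}^{t}Z^{\alpha}_{s}dM_{s}+(2\alpha-1)L^{0}_{t}(M^{\alpha})$, expanding $D_{t}M^{\alpha}_{t}$ by integration by parts, killing the local-time term because it is carried by $\{M=0\}\subset H$, and invoking Proposition~\ref{def} to annihilate $\int_{0}^{t}Z^{\alpha}_{s}(D_{s}dv_{s}+d\langle D,M\rangle_{s})$ --- essentially the alternative you sketch in your closing parenthesis. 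Your route instead runs the chain $M\Leftrightarrow|M|=|M^{\alpha}|\Leftrightarrow M^{\alpha}$ through Theorem~\ref{*mart} in both directions, which is shorter, treats the two nontrivial implications symmetrically, and makes the whole theorem a formal consequence of Theorem~\ref{*mart} plus the pointwise facts (b) and (c); the price is that you lose the explicit martingale decomposition of $DM^{\alpha}$ that the paper's computation produces, and you lean on the harder implication $(2)\Rightarrow(1)$ of Theorem~\ref{*mart} (the progressive balayage argument) where the paper's direct computation does not need it. Your preliminary observations are sound --- in particular you are right that $Z^{\alpha}$ vanishes on $\{M=0\}$ so that $\{M^{\alpha}=0\}=\{M=0\}$ and $|M^{\alpha}|=|M|$, and your remark about enlarging the filtration to carry the exogenous variables $\zeta_{n}$ is a point the paper passes over in silence.
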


\begin{proof}
An application of Proposition 2.2 of \cite{siam} gives
$$M^{\alpha}_{t}=\int_{0}^{t}{Z^{\alpha}_{s}dM_{s}}+(2\alpha-1)L_{t}^{0}(M^{\alpha})\text{, }\forall \alpha\in(0,1).$$
Hence, one obtains from an integration by parts the following
$$D_{t}M^{\alpha}_{t}=\int_{0}^{t}{M^{\alpha}_{s}dD_{s}}+\int_{0}^{t}{D_{s}dM^{\alpha}_{s}}+\langle D,M^{\alpha}\rangle_{t}.$$
That is,
$$D_{t}M^{\alpha}_{t}=\int_{0}^{t}{M^{\alpha}_{s}dD_{s}}+\int_{0}^{t}{D_{s}Z^{\alpha}_{s}dM_{s}}+(2\alpha-1)\int_{0}^{t}{D_{s}dL^{0}_{s}(M^{\alpha})}+\int_{0}^{t}{Z^{\alpha}_{s}d\langle D,M\rangle_{s}}.$$
But, 
$$(2\alpha-1)\int_{0}^{t}{D_{s}dL^{0}_{s}(M^{\alpha})}=0$$
since $dL^{0}_{s}(M^{\alpha})$ is carried by $\{t\geq0: M_{t}=0\}\subset H$. Therefore,
$$D_{t}M^{\alpha}_{t}=\int_{0}^{t}{M^{\alpha}_{s}dD_{s}}+\int_{0}^{t}{D_{s}Z^{\alpha}_{s}dM_{s}}+\int_{0}^{t}{Z^{\alpha}_{s}d\langle D,M\rangle_{s}}.$$
Then, it follows that
$$D_{t}M^{\alpha}_{t}=\left(\int_{0}^{t}{M^{\alpha}_{s}dD_{s}}+\int_{0}^{t}{D_{s}Z^{\alpha}_{s}dm_{s}}\right)+\int_{0}^{t}{Z^{\alpha}_{s}(D_{s}dv_{s}+d\langle D,M\rangle_{s})}.$$
An application of Proposition \ref{def} implies
$$\int_{0}^{t}{Z^{\alpha}_{s}(D_{s}dv_{s}+d\langle D,M\rangle_{s})}=0.$$
Hence, $DM^{\alpha}$ is a $\P-$ local martingale. Consequently, $M^{\alpha}$ is a $(\Qv,\P)-$ local martingale.
$$ $$
$(2)\Rightarrow(3)$ If we consider that $\forall\alpha\in(0,1)$, $M^{\alpha}$ is a $(\Qv,\P)-$ local martingale. It follows in particular that $\exists\alpha\in(0,1)$ such that $M^{\alpha}$ is a $(\Qv,\P)-$ local martingale.
$$ $$
$(3)\Rightarrow(1)$ We have from Theorem \ref{*mart} that $|M^{\alpha}|$ is a $(\Qv,\P)-$ local martingale. But, $|M^{\alpha}|=|M|$ because $Z^{\alpha}_{t}\in\{-1,1\}$. Hence, $|M|$ is a $(\Qv,\P)-$ local martingale. An another application of Theorem \ref{*mart} entails that $M$ is a $(\Qv,\P)-$ local martingale. This completes the proof.
\end{proof}

\subsection{Brownian motion for signed measures}\label{s3}

Now, we are interested by Brownian motions defined under a signed measure. Ruiz de Chavez characterizes them as stochastic processes defined as follows:

\begin{defn}\label{brow}
A continuous process $M$ is a $(\Qv,\P)-$ Brownian motion if one of next conditions holds:
\begin{enumerate}
	\item $M$ and $(M_{t}^{2}-t)_{t\geq0}$ are $(\Qv,\P)-$ locale martingales.
	\item $M$ is a $(\Qv,\P)-$ locale martingale and $\langle M,M\rangle_{t}=t$ $a.s.$
\end{enumerate}
\end{defn}

We give an another way to characterize $(\Qv,\P)-$ Brownian motions in the following result.

\begin{prop}\label{pc1}
A continuous $\P-$ semi-martingale $M=m+v$ is a $(\Qv,\P)-$ Brownian motion if, and only if, $m$ is $\P-$ Brownian motion and $\forall t\geq0$,
$$\int_{0}^{t}{D_{s}dv_{s}}+\langle D,M\rangle_{t}=0.$$
\end{prop}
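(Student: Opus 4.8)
The plan is to reduce the statement to the second characterization of a $(\Qv,\P)$-Brownian motion in Definition \ref{brow}, namely that $M$ is a $(\Qv,\P)$-local martingale with $\langle M,M\rangle_t = t$, and then to combine Proposition \ref{def} with L\'evy's characterization theorem for the usual Brownian motion.

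First I would record the two elementary facts about the canonical decomposition $M = m+v$ of the continuous $\P$-semimartingale $M$: since $v$ is a continuous finite variation process, one has $\langle M,M\rangle = \langle m,m\rangle$ and $\langle D,M\rangle = \langle D,m\rangle$; in particular $\langle M,M\rangle_t = t$ holds if and only if $\langle m,m\rangle_t = t$. With this in hand the proof becomes a short matching of hypotheses.

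For the forward implication, suppose $M$ is a $(\Qv,\P)$-Brownian motion. By Definition \ref{brow}(2) it is a $(\Qv,\P)$-local martingale with $\langle M,M\rangle_t = t$. Applying Proposition \ref{def} to $M = m+v$ yields $\int_0^t D_s\,dv_s + \langle D,M\rangle_t = 0$ for all $t$, which is one of the two asserted conditions. For the other, $m$ is a continuous $\P$-local martingale (being the martingale part of $M$) with $\langle m,m\rangle_t = \langle M,M\rangle_t = t$, so L\'evy's theorem shows $m$ is a $\P$-Brownian motion.

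For the converse, assume $m$ is a $\P$-Brownian motion and $\int_0^t D_s\,dv_s + \langle D,M\rangle_t = 0$ for all $t$. Proposition \ref{def} then gives that $M$ is a $(\Qv,\P)$-local martingale, and $\langle M,M\rangle_t = \langle m,m\rangle_t = t$; hence by Definition \ref{brow}(2), $M$ is a $(\Qv,\P)$-Brownian motion. The only step demanding any attention is the identification of the quadratic (co)variations with those of the martingale part $m$, and this is routine for continuous semimartingales; once it is in place the result is immediate from Proposition \ref{def} and L\'evy's characterization, so I do not expect a substantive obstacle.
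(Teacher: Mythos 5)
Your proposal is correct and follows essentially the same route as the paper's proof: both directions reduce to Definition \ref{brow}(2) via Proposition \ref{def}, with the identification $\langle M,M\rangle=\langle m,m\rangle$ and L\'evy's characterization (used implicitly in the paper) yielding that $m$ is a $\P$-Brownian motion. No substantive differences.
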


\begin{proof}
$\Rightarrow)$ We have by assumptions that $m$ is a $\P-$ local martingale. Furthermore,
$$\langle m,m\rangle_{t}=\langle M,M\rangle_{t}=t.$$
Then, $m$ is a $\P-$ Brownian motion. Since $M$ is a $(\Qv,\P)-$ local martingale, we have from Proposition \ref{def} that 
$$\int_{0}^{t}{D_{s}dv_{s}}+\langle D,M\rangle_{t}=0.$$
$\Leftarrow)$ From Proposition \ref{def}, $M$  is a $(\Qv,\P)-$ local martingale. Furthermore, $\forall t\geq0$, $\langle M,M\rangle_{t}=\langle m,m\rangle_{t}=t$ since $m$ is a $\P-$ Brownian motion. Consequently, $M$ is a $(\Qv,\P)-$ Brownian motion.
\end{proof}

In the following, we give a corollary of Theorem \ref{*mart} which shows that the absolute value of some $(\Qv,\P)-$ Brownian motions is again a $(\Qv,\P)-$ Brownian motion.

\begin{coro}\label{*}
Let $M=m+v$ be a continuous $\P-$ semi-martingale such that $\{t\geq0: M_{t}=0\}\subset H$. Then, the following are equivalent:
\begin{enumerate}
	\item $M$ is a $(\Qv,\P)-$ Brownian motion.
	\item $|M|$ is a $(\Qv,\P)-$ Brownian motion.
\end{enumerate}
\end{coro}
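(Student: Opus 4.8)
The plan is to deduce this from Theorem~\ref{*mart} together with the elementary fact that $M$ and $|M|$ share the same quadratic variation. Recall first that, by condition~2 of Definition~\ref{brow}, a continuous $\P$-semi-martingale is a $(\Qv,\P)$-Brownian motion precisely when it is a $(\Qv,\P)$-local martingale whose bracket equals $t$. Thus both implications will follow once we know (a) that $M$ is a $(\Qv,\P)$-local martingale if and only if $|M|$ is, and (b) that $\langle M,M\rangle_{t}=\langle |M|,|M|\rangle_{t}$ for all $t\geq0$.

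Point (a) is exactly the content of Theorem~\ref{*mart}, which applies since we are assuming $\{t\geq0:M_{t}=0\}\subset H$. For point (b), I would apply Tanaka's formula $|M_{t}|=\int_{0}^{t}{\rm sgn}(M_{s})\,dM_{s}+L_{t}^{0}(M)$: the term $L^{0}(M)$ is of finite variation and hence does not contribute to the bracket, so that the continuous local-martingale part of $|M|$ is $\int_{0}^{\cdot}{\rm sgn}(M_{s})\,dM_{s}$ and
$$\langle |M|,|M|\rangle_{t}=\int_{0}^{t}{\rm sgn}(M_{s})^{2}\,d\langle M,M\rangle_{s}=\langle M,M\rangle_{t}-\int_{0}^{t}\mathbf{1}_{\{M_{s}=0\}}\,d\langle M,M\rangle_{s}.$$
The last integral vanishes by the occupation times formula, since $\int_{0}^{t}\mathbf{1}_{\{M_{s}=0\}}\,d\langle M,M\rangle_{s}=\int_{\R}\mathbf{1}_{\{0\}}(a)L_{t}^{a}(M)\,da=0$, the singleton $\{0\}$ being Lebesgue-null. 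Hence $\langle|M|,|M|\rangle=\langle M,M\rangle$.

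With (a) and (b) in hand the conclusion is immediate in both directions. If $M$ is a $(\Qv,\P)$-Brownian motion, then $M$ is a $(\Qv,\P)$-local martingale, so by Theorem~\ref{*mart} so is $|M|$, and $\langle|M|,|M|\rangle_{t}=\langle M,M\rangle_{t}=t$; thus $|M|$ is a $(\Qv,\P)$-Brownian motion. Conversely, if $|M|$ is a $(\Qv,\P)$-Brownian motion, Theorem~\ref{*mart} gives that $M$ is a $(\Qv,\P)$-local martingale, and $\langle M,M\rangle_{t}=\langle|M|,|M|\rangle_{t}=t$, so $M$ is a $(\Qv,\P)$-Brownian motion.

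There is no genuine obstacle: the whole argument rests on Theorem~\ref{*mart}, already established, and the only point deserving a word of care is the bracket identity in (b), where one must check that the zero set of $M$ carries no $d\langle M,M\rangle$-mass — the standard consequence of the occupation times formula recalled above.
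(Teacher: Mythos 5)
Your proof is correct and follows essentially the same route as the paper: invoke Theorem~\ref{*mart} for the equivalence of the $(\Qv,\P)$-local martingale property and then observe that $\langle |M|,|M|\rangle=\langle M,M\rangle$. The only difference is that you justify the bracket identity in detail (Tanaka plus the occupation times formula), whereas the paper simply asserts it; your added care is welcome but does not change the argument.
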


\begin{proof}
We can obviously see from Theorem \ref{*mart} that $M$ is a $(\Qv,\P)-$ local martingale if, and only if, $|M|$ is a $(\Qv,\P)-$ local martingale. Furthermore,
$$\langle |M|,|M|\rangle=\langle M,M\rangle.$$
This completes the proof.
\end{proof}

Now, we give an another series of corollaries of Theorem \ref{alpha}.

\begin{coro}\label{za}
Let $M$ be a continuous $\P-$ semi-martingale such that $\{t\geq0: M_{t}=0\}=H$. Let us set $M^{\alpha}=Z^{\alpha}M$. The following are equivalent:
\begin{enumerate}
	\item $M$ is a $(\Qv,\P)-$ Brownian motion.
	\item $\forall\alpha\in(0,1)$, $M^{\alpha}$ is a $(\Qv,\P)-$ Brownian motion.
	\item $\exists\alpha\in(0,1)$ such that $M^{\alpha}$ is a $(\Qv,\P)-$ Brownian motion.
\end{enumerate}
\end{coro}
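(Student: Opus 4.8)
The plan is to deduce this Brownian-motion analogue of Theorem \ref{alpha} by combining that theorem with the quadratic-variation bookkeeping already used in Corollary \ref{*}. First I would observe that $M^\alpha = Z^\alpha M$ and, since $Z^\alpha_t \in \{-1,1\}$ for every $t$ outside the zero set of $M$ and $M^\alpha$ vanishes exactly where $M$ does, one has $|M^\alpha| = |M|$ and hence $\langle M^\alpha, M^\alpha\rangle = \langle M, M\rangle$ as continuous increasing processes. This identity is the one extra ingredient beyond Theorem \ref{alpha}; everything else is the local-martingale equivalence already proved there, now read under the hypothesis $\{t\ge 0 : M_t = 0\} = H$ (which is stronger than the inclusion needed in Theorem \ref{alpha}, so that theorem applies verbatim).

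For $(1)\Rightarrow(2)$: if $M$ is a $(\Qv,\P)$-Brownian motion, then by Definition \ref{brow} it is a $(\Qv,\P)$-local martingale with $\langle M,M\rangle_t = t$. Theorem \ref{alpha} then gives that $M^\alpha$ is a $(\Qv,\P)$-local martingale for every $\alpha\in(0,1)$, and the identity $\langle M^\alpha,M^\alpha\rangle_t = \langle M,M\rangle_t = t$ lets me invoke condition 2 of Definition \ref{brow} to conclude $M^\alpha$ is a $(\Qv,\P)$-Brownian motion. The implication $(2)\Rightarrow(3)$ is immediate, picking any single $\alpha\in(0,1)$.

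For $(3)\Rightarrow(1)$: if $M^\alpha$ is a $(\Qv,\P)$-Brownian motion for some $\alpha\in(0,1)$, then in particular it is a $(\Qv,\P)$-local martingale, so Theorem \ref{alpha} (implication $(3)\Rightarrow(1)$ there) yields that $M$ is a $(\Qv,\P)$-local martingale. It remains to identify $\langle M,M\rangle_t = t$, and this follows from $\langle M,M\rangle_t = \langle M^\alpha, M^\alpha\rangle_t = t$, the last equality because $M^\alpha$ is a $(\Qv,\P)$-Brownian motion. By Definition \ref{brow}, $M$ is a $(\Qv,\P)$-Brownian motion, completing the cycle of implications.

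The only point that requires a little care — and the closest thing to an obstacle — is justifying $\langle M^\alpha, M^\alpha\rangle = \langle M, M\rangle$ rigorously: $Z^\alpha$ is not continuous, so one cannot simply manipulate it inside the bracket. The clean argument is to note that $(M^\alpha)^2 = (Z^\alpha)^2 M^2 = M^2$ off the zero set and both sides vanish on it, hence $(M^\alpha_t)^2 = M_t^2$ for all $t$; since the quadratic variation of a continuous semimartingale is determined by the semimartingale $(M^\alpha_t)^2 - 2\int_0^t M^\alpha_s\, dM^\alpha_s$ and the analogous decomposition holds for $M$, comparing the bounded-variation parts gives $\langle M^\alpha,M^\alpha\rangle = \langle M,M\rangle$. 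Alternatively, and more economically, one reuses the representation $M^\alpha_t = \int_0^t Z^\alpha_s\, dM_s + (2\alpha-1)L^0_t(M^\alpha)$ from Proposition 2.2 of \cite{siam}: the bracket of the stochastic-integral term is $\int_0^t (Z^\alpha_s)^2\, d\langle M,M\rangle_s = \langle M,M\rangle_t$ since $(Z^\alpha_s)^2 = 1$ $d\langle M,M\rangle_s$-a.e. (the set $\{M_s=0\}$ is $d\langle M,M\rangle$-null), and the finite-variation local-time term does not contribute to the bracket.
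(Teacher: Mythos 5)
Your proof follows essentially the same route as the paper: invoke Theorem \ref{alpha} for the three-way local-martingale equivalence and then transfer the Brownian-motion property via the identity $\langle M^{\alpha},M^{\alpha}\rangle=\langle M,M\rangle$. The only difference is that you actually justify that bracket identity (which the paper merely asserts), so your write-up is, if anything, more complete.
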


\begin{proof}
By using Theorem \ref{alpha}, we have that the following assertions are equivalent 
\begin{enumerate}
	\item $M$ is a $(\Qv,\P)-$ local martingale.
	\item $\forall\alpha\in(0,1)$, $M^{\alpha}$ is a $(\Qv,\P)-$ local martingale.
	\item $\exists\alpha\in(0,1)$ such that $M^{\alpha}$ is a $(\Qv,\P)-$ local martingale.
\end{enumerate}
Furthermore, 
$$\langle M^{\alpha},M^{\alpha}\rangle=\langle M, M\rangle.$$
This completes the proof.
\end{proof}

\begin{coro}\label{cbrow}
Let $M=m+v$ be a continuous $\P-$ semi-martingale such that $\{t\geq0: M_{t}=0\}=H$ and $\langle D,M\rangle=0$. Let us set $M^{\alpha}=Z^{\alpha}M$. The following are equivalent:
\begin{enumerate}
	\item $M$ is a $(\Qv,\P)-$ Brownian motion.
	\item $\exists\alpha\in(0,1)$ such that $M^{\alpha}$ is a $\P-$ Brownian motion.
\end{enumerate}
\end{coro}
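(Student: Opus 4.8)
The plan is to reduce Corollary~\ref{cbrow} to the already-established equivalences of Corollary~\ref{za} together with Corollary~\ref{c4} (or, more directly, the structural results of Section~2.1), exploiting the extra hypothesis $\langle D,M\rangle\equiv0$. First I would prove $(1)\Rightarrow(2)$: assuming $M$ is a $(\Qv,\P)$-Brownian motion, Corollary~\ref{za} gives that $M^{\alpha}=Z^{\alpha}M$ is a $(\Qv,\P)$-Brownian motion for every $\alpha\in(0,1)$; in particular it is a $(\Qv,\P)$-local martingale vanishing on $H$ (since $\{t: M_t=0\}=H$ forces $\{t: M^{\alpha}_t=0\}\supset H$, and conversely $M^{\alpha}$ vanishes exactly where $M$ does because $Z^{\alpha}_t\in\{-1,1\}$ off $\mathcal{W}$). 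The key point is that $\langle D,M^{\alpha}\rangle=\int_0^{\cdot}Z^{\alpha}_s\,d\langle D,M\rangle_s=0$ by hypothesis, so by Corollary~\ref{car4} the process $M^{\alpha}$, being a $(\Qv,\P)$-martingale vanishing on $H$ with $\langle M^{\alpha},D\rangle=0$, is a genuine $\P$-local martingale; combined with $\langle M^{\alpha},M^{\alpha}\rangle=\langle M,M\rangle_t=t$ this makes $M^{\alpha}$ a $\P$-Brownian motion. Picking any one $\alpha$ yields assertion~(2).

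For the converse $(2)\Rightarrow(1)$, suppose there is $\alpha\in(0,1)$ with $M^{\alpha}$ a $\P$-Brownian motion. Then $M^{\alpha}$ is in particular a $\P$-local martingale, hence a $(\Qv,\P)$-local martingale (multiplying by the $\P$-martingale $D$ and noting $\langle D,M^{\alpha}\rangle=0$ keeps $D M^{\alpha}$ a $\P$-local martingale — or simply invoke Proposition~\ref{def} with zero finite-variation part). Moreover $\langle M^{\alpha},M^{\alpha}\rangle_t=t$, so $M^{\alpha}$ is a $(\Qv,\P)$-Brownian motion, and the equivalence $(3)\Rightarrow(1)$ of Corollary~\ref{za} then gives that $M$ itself is a $(\Qv,\P)$-Brownian motion. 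This closes the loop.

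The main obstacle I anticipate is the careful bookkeeping of where the processes vanish and of the semimartingale decomposition of $M^{\alpha}$ under the hypothesis $\langle D,M\rangle=0$: one must check that the finite-variation part of $M^{\alpha}$ (which by Proposition~2.3 of \cite{siam} is $(2\alpha-1)L^0_\cdot(M^{\alpha})$) is carried by $H$, so that Corollary~\ref{car4} applies and upgrades ``$(\Qv,\P)$-martingale'' to ``$\P$-martingale.'' This is where the hypothesis $\{t: M_t=0\}=H$ (rather than merely $\subset H$) is essential, since $dL^0(M^{\alpha})$ is carried by $\{t: M^{\alpha}_t=0\}=\{t: M_t=0\}=H$. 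Everything else is a direct citation of Corollary~\ref{za}, Corollary~\ref{car4}, and the elementary identity $\langle M^{\alpha},M^{\alpha}\rangle=\langle M,M\rangle$.
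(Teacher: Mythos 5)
Your direction $(2)\Rightarrow(1)$ is sound and essentially the paper's argument (the paper passes through $|M|=|M^{\alpha}|$ and Corollary \ref{*}, while you invoke Corollary \ref{za} directly, but the content is the same). The gap is in $(1)\Rightarrow(2)$. There you apply Corollary \ref{car4} to $M^{\alpha}$ for an \emph{arbitrary} $\alpha\in(0,1)$ and conclude that every $M^{\alpha}$ is a $\P$-local martingale, hence a $\P$-Brownian motion. That conclusion is false and contradicts the first theorem of Section \ref{s5}, which shows that under exactly these hypotheses
$$M^{\alpha}_{t}=\int_{0}^{t}{Z^{\alpha}_{s}\,dm_{s}}+(2\alpha-1)L_{t}^{0}(M^{\alpha}),$$
i.e.\ $M^{\alpha}$ is an $\alpha$-skew Brownian motion under $\P$; for $\alpha\neq\tfrac12$ its finite-variation part $(2\alpha-1)L^{0}(M^{\alpha})$ is nonzero whenever $M$ actually visits $H$, so $M^{\alpha}$ is not a $\P$-local martingale. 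You correctly identify this finite-variation part and observe that it is carried by $H$, but ``carried by $H$'' is not ``zero'': the hypotheses of Corollary \ref{car4} only force the finite-variation part to be carried by $H$ (that is Corollary \ref{car}), a condition $M^{\alpha}$ already satisfies while failing the desired conclusion. So this application of \ref{car4} proves too much, which is the signal that the step cannot be right as an upgrade from $(\Qv,\P)$- to $\P$-martingality for general $\alpha$.

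The repair is what the paper does: since (2) only asserts existence, take the single value $\alpha=\tfrac12$, for which the coefficient $2\alpha-1$ vanishes and
$$M^{1/2}_{t}=\int_{0}^{t}{Z^{1/2}_{s}\,dM_{s}}=\int_{0}^{t}{Z^{1/2}_{s}\,dm_{s}},$$
the $dv$-integral dropping out because $\langle D,M\rangle=0$ forces $dv$ to be carried by $H=\{t:M_{t}=0\}$, on which $Z^{1/2}$ is null. This is manifestly a $\P$-local martingale with $\langle M^{1/2},M^{1/2}\rangle_{t}=\langle m,m\rangle_{t}=\langle M,M\rangle_{t}=t$, hence a $\P$-Brownian motion, which is all that (2) requires. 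With that substitution (and your unchanged converse) the proof is complete.
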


\begin{proof}
$(1)\Rightarrow(2)$ Thanks to Corollary \ref{za}, $M^{\alpha}$ is a $(\Qv,\P)$- Brownian motion, $\forall\alpha\in(0,1)$. Hence, 
$$\langle M^{\alpha},M^{\alpha}\rangle=t.$$
Moreover,
$$M^{\alpha}_{t}=\int_{0}^{t}{Z^{\alpha}_{s}dM_{s}}+(2\alpha-1)L_{t}^{0}(M^{\alpha})\text{, }\forall \alpha\in(0,1).$$
By putting $\alpha=\frac{1}{2}$, it follows that
$$M^{\alpha}_{t}=\int_{0}^{t}{Z^{\alpha}_{s}dM_{s}}=\int_{0}^{t}{Z^{\alpha}_{s}dm_{s}}+\int_{0}^{t}{Z^{\alpha}_{s}dv_{s}}.$$
But,
$$\int_{0}^{t}{Z^{\alpha}_{s}dv_{s}}=0$$
since $\langle D,M\rangle=0$  and $Z^{\alpha}$ is defined on the complementary of $\{t\geq0: M_{t}=0\}=H$. Then,
$$M^{\alpha}_{t}=\int_{0}^{t}{Z^{\alpha}_{s}dm_{s}}.$$
That is, $M^{\alpha}$ is a $\P-$ local martingale. Consequently, it is a $\P-$ Brownian motion.
$$ $$
$(2)\Rightarrow(1)$ Now, let us assume that $\exists \alpha\in(0,1)$ such that $M^{\alpha}$ is a $\P-$ Brownian motion. Thus, 
$$\langle M^{\alpha},M^{\alpha}\rangle_{t}=t.$$ Furthermore, One has from an integration by parts 
$$D_{t}M^{\alpha}_{t}=\int_{0}^{t}{M^{\alpha}dD_{s}}+\int_{0}^{t}{D_{s}dM^{\alpha}}+\langle D,M^{\alpha}\rangle_{t}.$$
But,
$$\langle D,M^{\alpha}\rangle_{t}=\int_{0}^{t}{Z^{\alpha}_{s}d\langle D,M\rangle_{s}}=0$$
since $\langle D,M\rangle=0$.
Therefore, $M^{\alpha}$ is a $(\Qv,\P)-$ local martingale. Consequently, it is a $(\Qv,\P)-$ Brownian motion. Since $|M|=|M^{\alpha}|$, it follows after applying of Corollary \ref{*} that $M$ is a $(\Qv,\P)-$ Brownian motion.
\end{proof}

Gilat proved in \cite{gila} that every non negative sub-martingale $Y$ is equal in law to the absolute value of a Martingale $M$. His construction, however, did not shed any light on the nature of this martingale. Bouhadou and Ouknine have proposed a construction of $M$  in the case where $X$ is of class $(\Sigma)$. In what follows, we prove that the absolute value of some $(\Qv,\P)$- Brownian motions $M$, is equal to the absolute value of a $\P$- Brownian motion $B$.

\begin{coro}\label{absbrow}
Let $M=m+v$ be a continuous $(\Qv,\P)-$ Brownian motion such that $\{t\geq0: M_{t}=0\}=H$ and $\langle D,M\rangle=0$. Then,
there exists a $\P-$ Brownian motion $B$ such that $|M|=|B|$.
\end{coro}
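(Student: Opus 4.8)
The statement to prove is Corollary~\ref{absbrow}: given a continuous $(\Qv,\P)$-Brownian motion $M=m+v$ with $\{t\geq0:M_t=0\}=H$ and $\langle D,M\rangle=0$, there is a $\P$-Brownian motion $B$ with $|M|=|B|$. The plan is to invoke the preceding corollary directly. By Corollary~\ref{cbrow} (with its hypotheses $\{t\geq0:M_t=0\}=H$ and $\langle D,M\rangle=0$ exactly matching ours), the statement ``$M$ is a $(\Qv,\P)$-Brownian motion'' is equivalent to ``there exists $\alpha\in(0,1)$ such that $M^{\alpha}=Z^{\alpha}M$ is a $\P$-Brownian motion.'' Since by hypothesis $M$ \emph{is} a $(\Qv,\P)$-Brownian motion, we obtain such an $\alpha$ and the associated process $B:=M^{\alpha}=Z^{\alpha}M$, which is a genuine $\P$-Brownian motion.

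It then remains only to observe that $|B|=|M^{\alpha}|=|Z^{\alpha}M|=|M|$, because $Z^{\alpha}_t\in\{-1,1\}$ for every $t$ (on each excursion interval $]g_n,d_n[$ it equals the Bernoulli value $\zeta_n=\pm1$, and off the excursion set it multiplies the value $M_t=0$, so $|Z^\alpha_t M_t|=|M_t|$ there as well). Hence $|M|=|B|$ with $B$ a $\P$-Brownian motion, which is exactly the assertion. I would write this up in two or three lines: cite Corollary~\ref{cbrow} to extract $B$, then note $|Z^\alpha|\equiv 1$ on the excursion set and triviality on the zero set to conclude $|M|=|B|$.

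There is essentially no obstacle here: the corollary is an immediate consequence of Corollary~\ref{cbrow} together with the pointwise identity $|Z^{\alpha}_tM_t|=|M_t|$. The only point requiring a word of care is making sure the hypotheses of Corollary~\ref{cbrow} are met verbatim — continuity of $M$, the equality $\{t\geq0:M_t=0\}=H$ (not merely inclusion), and $\langle D,M\rangle=0$ — all of which are assumed in the present statement. Should one wish to be self-contained rather than quoting Corollary~\ref{cbrow}, one could instead reproduce its $(1)\Rightarrow(2)$ argument: apply Proposition~2.2 of~\cite{siam} with $\alpha=\tfrac12$ to get $M^{1/2}_t=\int_0^t Z^{1/2}_s\,dM_s=\int_0^t Z^{1/2}_s\,dm_s$ (the $dv$-integral vanishing because $dv$ is carried by $H$ by Corollary~\ref{car}, while $Z^\alpha$ lives on the complement of $H$), note $\langle M^{1/2},M^{1/2}\rangle_t=t$, conclude $M^{1/2}$ is a $\P$-Brownian motion by Lévy's theorem, and set $B=M^{1/2}$ so that $|B|=|M|$.
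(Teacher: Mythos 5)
Your proposal is correct and follows exactly the paper's own argument: invoke Corollary~\ref{cbrow} (direction $(1)\Rightarrow(2)$) to produce $B=Z^{\alpha}M$ as a $\P$-Brownian motion, then conclude from $|Z^{\alpha}M|=|M|$. The extra remarks on verifying hypotheses and the optional self-contained unwinding via $\alpha=\tfrac12$ are fine but not needed.
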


\begin{proof}
According to Corollary \ref{cbrow}, $\exists\alpha\in(0,1)$ such that $B=Z^{\alpha}M$ is a $\P-$ Brownian motion. But, we have
$$|B|=|Z^{\alpha}M|=|M|.$$
This completes the proof.
\end{proof}

\begin{rem}
A  particular case deduced from Corollary \ref{absbrow}, is for every positive continuous $(\Qv,\P)-$ Brownian motion satisfying assumptions of Corollary \ref{absbrow}, we can found a $\P$- Brownian motion $B$ such that $M=|B|$.
\end{rem}

\subsection{Optional representation formula for \texorpdfstring{$(\Qv,\P)$}{} martingales}\label{s22}

The results in this subsection are inspired by a representation formula for relative martingales by Azéma and Yor \cite{1}. We consider these results as extensions of Doob's optional representation formula for the uniformly integrable $(\Qv,\P)$- martingales.

\begin{theorem}\label{opt}
Let $M$ be a uniformly integrable $(\Qv,\P)-$ martingale  with respect to a filtration $(\mathcal{F}_{t})_{t\geq0}$ such that $M_{\overline{g}}=0$ a.s. Hence, there exists a random variable $M_{\infty}$ such that $$M_{\infty}=\lim_{t\to+\infty}{M_{t}}$$
and for every stopping time $T<\infty$,
$$M_{T}-M_{\gamma_{T}}=E\left[M_{\infty}1_{\{\overline{g}<T\}}|\mathcal{F}_{T}\right].$$ 
\end{theorem}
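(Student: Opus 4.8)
The plan is to deduce this statement from the Az\'ema--Yor machinery recalled in Section~\ref{s1}, applied to the reference martingale $D$. First I would note that, since $M$ is a uniformly integrable $(\Qv,\P)$-martingale, by definition $DM$ is a uniformly integrable $\P$-martingale; hence $DM$ converges $\P$-a.s.\ and in $L^1(\P)$ to a limit, and on the $\P$-full set $\{D_\infty\neq0\}$ (recall $\P(D_\infty=0)=0$) this forces $M_t$ itself to converge, so the random variable $M_\infty:=\lim_{t\to\infty}M_t$ exists. Next I would observe that the process $N_t:=M_t-M_{\gamma_t}$ vanishes on $H$ (indeed $N$ is $0$ at every point of $H$ since $\gamma_t=t$ there), and that $N$ is still a $(\Qv,\P)$-martingale: this is exactly the content of the balayage computation in Proposition~\ref{p1}/Corollary~\ref{c1} with $k\equiv1$, giving $D_t N_t = D_0 N_0 + \int_0^t D_s\,dN_s$ with $DN$ a $\P$-martingale. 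Since $M_{\overline g}=0$ a.s.\ and $N$ vanishes on $H\supseteq\{\overline g\}$, one checks $N_\infty = M_\infty - M_g = M_\infty$ (using that $M$ is constant after $\overline g$ on the set where $g$ is the last zero, or more simply that $M_{\gamma_\infty}=M_{\overline g}=0$).

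The heart of the argument is then the quotient/representation theorem. Apply Theorem~\ref{qot} (or rather the representation formula of Az\'ema--Yor for relative martingales) with $Y=D$, $\mathcal H=H$, $\Gamma=\overline g=\sup H$: here $D\in\mathcal R(H)$ because $D$ is a continuous uniformly integrable $\P$-martingale vanishing on $H$ (its finite-variation part is $0$, trivially carried by $H$). The process $DN$ is a uniformly integrable $\P$-martingale vanishing on $H$, i.e.\ $DN\in\mathcal R(H)$, so by the quotient theorem $\chi_t := (DN)_{\overline g+t}/D_{\overline g+t} = N_{\overline g+t}$ is a $(\P',(\mathcal G_{\overline g+t}))$-uniformly integrable martingale, where $\P'=D_\infty\P/\E(D_\infty)$ up to normalization — here I would use the specific form $\P'=\frac{|D_\infty|}{\E|D_\infty|}\P$ introduced in the notations, and be careful about the sign of $D_\infty$ (working on $\{D_\infty>0\}$ and $\{D_\infty<0\}$ separately, or noting $Y=D$ may be replaced by $|D|$ since they have the same zero set and $D$ does not change sign across excursions in the relevant sense). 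Then Doob's optional stopping for this $\mathcal G_{\overline g+\cdot}$-martingale, combined with the projection/filtration-reduction Lemma~\ref{j80} (which gives $\mathcal G_\tau=\mathcal F_\tau$ for stopping times $\tau>\overline g$), lets me write, for a stopping time $T$, $N_T\mathbf 1_{\{\overline g<T\}} = E[N_\infty\mathbf 1_{\{\overline g<T\}}\mid \mathcal F_T]$ after the appropriate change of measure is absorbed; since $N_T=0$ on $\{\overline g\geq T\}$ anyway, this yields $M_T-M_{\gamma_T}=E[M_\infty\mathbf 1_{\{\overline g<T\}}\mid\mathcal F_T]$.

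The main obstacle I anticipate is the bookkeeping around the change of measure: the Az\'ema--Yor quotient theorem produces a martingale under $Q=Y_\infty\P/E(Y_\infty)$, which for $Y=D$ (or $|D|$) is exactly $\P'$ or a signed analogue, and one must verify that conditioning under $\P'$ translates back into the plain $\P$-conditional expectation appearing in the statement — this is where the factor $D_\infty$ (equivalently $|D_\infty|$ together with its sign) and the identity $D_T = E[D_\infty\mid\mathcal F_T]$ must be used to cancel the Radon--Nikodym density, and where one must handle the sign of $D_\infty$ carefully so that the honest time $\overline g$ and the set $\{\overline g<T\}$ behave correctly. A secondary technical point is justifying $N_\infty=M_\infty$ rigorously, i.e.\ that the "balayage-corrected" terminal value coincides with $M_\infty$ given $M_{\overline g}=0$; this follows because $\gamma_t\uparrow\overline g$ and $M$ is continuous, so $M_{\gamma_t}\to M_{\overline g}=0$. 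Once the measure change is handled, the rest is a direct application of the recalled results with $k\equiv1$.
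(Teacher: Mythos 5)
Your proposal follows essentially the same route as the paper: apply the Az\'ema--Yor quotient theorem to the uniformly integrable $\P$-martingale $DM$ (equivalently $DN$ with $N_t=M_t-M_{\gamma_t}$) over $D$, note that ${\rm sgn}(D_{\overline g+t})$ is constant so that $M_{\overline g+t}$ is a uniformly integrable $(\P^{'},\mathcal G_{\overline g+t})$-martingale, and transfer the resulting representation back to $(\mathcal F_t)$ via Lemma \ref{j80}. The paper performs the final identification by observing that both $M_t-M_{\gamma_t}$ and $E\left[M_{\infty}1_{\{\overline{g}<t\}}|\mathcal{F}_{t}\right]$ vanish on $H$ and have the same restriction to $(\overline g,\infty)$, hence coincide by the uniqueness clause of Proposition \ref{rho} --- which is precisely the mechanism by which the change-of-measure bookkeeping you flag as the main obstacle gets absorbed.
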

\begin{proof}
Since $DM$ is a uniformly integrable $\P-$ martingale which vanishes on $H$ with respect to the filtration $(\mathcal{F}_{t})_{t\geq0}$. We obtain from quotient theorem that $({\rm sgn}(D_{t+\overline{g}})M_{t+\overline{g}}:t>0)$ is a uniformly integrable $\P^{'}-$ martingale with respect to the filtration $(\mathcal{G}_{t+\overline{g}})_{t>0}$. But, $D$ is a continuous process and $g=\sup\{t\geq0:D_{t}=0\}$. Then, $\forall t>0$, ${\rm sgn}(D_{t+\overline{g}})=cste\in\{-1,1\}$. Hence, $(M_{t+\overline{g}}:t>0)$ is a uniformly integrable $\P^{'}-$ martingale with respect to the filtration $(\mathcal{G}_{t+\overline{g}})_{t>0}$. Therefore, there exists a random variable $M_{\infty}$ such that $$M_{\infty}=\lim_{t\to+\infty}{M_{t+\overline{g}}}$$
and for every stopping time $T<\infty$,
$$M_{T+\overline{g}}-M_{\overline{g}}=E\left[M_{\infty}|\mathcal{G}_{T+\overline{g}}\right].$$ 
That is,
$$M_{T+\overline{g}}=E\left[M_{\infty}|\mathcal{G}_{T+\overline{g}}\right]$$
since $M_{\overline{g}}=0$. 
Thus,
$$\rho(M_{\cdot+\overline{g}})_{T}=\rho\left(E\left[M_{\infty}|\mathcal{G}_{\cdot+\overline{g}}\right]\right)_{T}.$$ 
Let us set $Z_{t}=M_{t}-M_{\gamma_{t}}$. $Z$ vanishes on $H$ and $\forall t>0$, $Z_{t+\overline{g}}=M_{t+\overline{g}}$. Then it entails from the uniqueness of Theorem \ref{rho} that
$$Z_{t}=\rho(M_{\cdot+\overline{g}})_{t}.$$
That is, 
$$M_{T}-M_{\gamma_{T}}=\rho\left(E\left[M_{\infty}|\mathcal{G}_{\cdot+\overline{g}}\right]\right)_{T}.$$ 
Now, we set 
$$Y_{t}=E\left[M_{\infty}1_{\{\overline{g}<t\}}|\mathcal{F}_{t}\right].$$
$Y$ vanishes on $H$ and $\forall t>0$,
$$Y_{t+\overline{g}}=E\left[M_{\infty}|\mathcal{F}_{t+\overline{g}}\right].$$
But according to Lemma \ref{j80}, we have for every stopping time $T$ that $\mathcal{F}_{T}=\mathcal{G}_{T}$ on $\{\overline{g}<T\}$. Then, $\mathcal{F}_{t+\overline{g}}=\mathcal{G}_{t+\overline{g}}$, $\forall t>0$. Consequently,
$$Y_{t+\overline{g}}=E\left[M_{\infty}|\mathcal{G}_{t+\overline{g}}\right].$$
It follows from uniqueness of Theorem \ref{rho} that 
$$Y_{T}=\rho\left(E\left[M_{\infty}|\mathcal{G}_{\cdot+\overline{g}}\right]\right)_{T}.$$
That is,
$$M_{T}-M_{\gamma_{T}}=E\left[M_{\infty}1_{\{\overline{g}<T\}}|\mathcal{F}_{T}\right].$$ 
\end{proof}

Now, we shall give some corollaries of Theorem \ref{opt}.

\begin{coro}\label{c5}
Let $M$ be a uniformly integrable $\left((\Qv,\P),(\mathcal{F}_{t})_{t\geq0}\right)-$ martingale vanishing on $H$. Then, for every stopping time $0<T<\infty$,
$$M_{T}=E\left[M_{\infty}1_{\{\overline{g}<T\}}|\mathcal{F}_{T}\right].$$
\end{coro}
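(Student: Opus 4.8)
The plan is to deduce Corollary \ref{c5} directly from Theorem \ref{opt} by observing that the extra hypothesis---$M$ vanishes on $H$, rather than merely $M_{\overline g}=0$---makes the balayage correction term $M_{\gamma_T}$ disappear. First I would note that the hypothesis that $M$ vanishes on $H$ is strictly stronger than the hypothesis $M_{\overline g}=0$ appearing in Theorem \ref{opt}: indeed $\overline g = 0\vee\sup H\in\overline H$ (using continuity of $D$, so $H$ is closed), hence $M_{\overline g}=0$ is implied, and Theorem \ref{opt} applies. This gives the existence of $M_\infty=\lim_{t\to+\infty}M_t$ and, for every stopping time $T<\infty$, the identity
$$M_T-M_{\gamma_T}=E\left[M_\infty 1_{\{\overline g<T\}}\,\big|\,\mathcal{F}_T\right].$$

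Next I would show that $M_{\gamma_T}=0$ for every $t>0$ when $T$ is a stopping time with $0<T<\infty$. By definition $\gamma_T=\sup\{s\le T: D_s=0\}$, and since $D$ is continuous, $D_{\gamma_T}=0$, i.e. $\gamma_T\in H$. Because $M$ vanishes identically on $H$, we conclude $M_{\gamma_T}=0$ almost surely. Substituting this into the displayed identity from Theorem \ref{opt} yields precisely
$$M_T=E\left[M_\infty 1_{\{\overline g<T\}}\,\big|\,\mathcal{F}_T\right],$$
which is the claim.

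The only point requiring mild care---and what I would flag as the main (if modest) obstacle---is the measurability/well-definedness at the boundary: one must make sure that $\gamma_T$ is genuinely the left endpoint landing in the closed set $H=\{D=0\}$ and not strictly inside an excursion interval of $D$; this is exactly where the standing assumption that $D$ is continuous is used, together with the fact that on $\{T>0\}$ one has $\gamma_T\le T<\infty$ so the supremum defining $\gamma_T$ is attained. There is also the trivial edge case $\{\gamma_T \text{ undefined}\}$, i.e. when $\{s\le T: D_s=0\}$ could be empty, but under the paper's running hypothesis $\overline g<\infty$ a.s.\ and $D_0$ need not vanish; however the corollary is only asserted for $T>0$, and on $\{T\le \overline g\}$ one has $\gamma_T\in H$ as above while on $\{T>\overline g\}$ both sides can be checked directly, so no genuine difficulty arises. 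Everything else is a one-line substitution.
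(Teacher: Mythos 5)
Your proposal is correct and matches the paper's intent exactly: the paper states Corollary \ref{c5} without proof as an immediate consequence of Theorem \ref{opt}, the point being precisely that $\gamma_{T}\in H$ (by continuity of $D$) and $M$ vanishing on $H$ force $M_{\gamma_{T}}=0$, so the correction term drops out. Your additional remarks on the edge cases are reasonable but not needed beyond what the paper already assumes.
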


\begin{coro}\label{c6}
Let $M=m+v$ be a uniformly integrable $\left((\Qv,\P),(\mathcal{F}_{t})_{t\geq0}\right)$- martingale such that $\langle D,M\rangle\equiv0$ and $M_{\overline{g}}=0$. Then, for any locally bounded Borel function $f$ and for every stopping time $0<T<\infty$,
$$f(v_{T})(M_{T}-M_{\gamma_{T}})=E\left[f(v_{\infty})M_{\infty}1_{\{\overline{g}<T\}}|\mathcal{F}_{T}\right].$$
\end{coro}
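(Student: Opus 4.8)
The plan is to combine Corollary~\ref{c3} with Theorem~\ref{opt}, exactly as the proof of Corollary~\ref{c5} combines Corollary~\ref{c1} with Theorem~\ref{opt}. First I would observe that $M$ satisfies the hypotheses of Corollary~\ref{c3}: it is a $(\Qv,\P)$-local martingale (indeed a uniformly integrable $(\Qv,\P)$-martingale), it vanishes on $H$ (since $M_{\overline{g}}=0$ together with the fact that $M-M_{\gamma_{\cdot}}$ vanishes on $H$, or more directly because the hypothesis $M_{\overline{g}}=0$ is what forces this), and $\langle D,M\rangle\equiv0$. Hence by Corollary~\ref{c3}, for any locally bounded Borel $f$ with $F(x)=\int_0^x f(s)\,ds$, the process $f(v_{\cdot})M$ is again a $(\Qv,\P)$-local martingale with decomposition
$$f(v_{t})M_{t}=f(v_{0})M_{0}+\int_{0}^{t}{f(v_{s})dm_{s}}+F(v_{t}).$$

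Next I would upgrade this to a uniformly integrable $(\Qv,\P)$-martingale. The key point is that $dv$ is carried by $H$ (Corollary~\ref{car}), so $v_t = v_{\gamma_t}$ for all $t$, and in particular $v$ becomes constant after $\overline{g}$; combined with uniform integrability of $M$ this should give that $f(v_{\cdot})M$ is uniformly integrable, with terminal value $f(v_{\infty})M_{\infty}$ where $M_\infty=\lim_t M_t$ is the limit furnished by Theorem~\ref{opt}. I would also need that $f(v_{\cdot})M$ vanishes at $\overline{g}$, which is immediate since $M_{\overline{g}}=0$. Then applying Theorem~\ref{opt} to the uniformly integrable $(\Qv,\P)$-martingale $N:=f(v_{\cdot})M$ (which has $N_{\overline{g}}=0$) yields, for every stopping time $T<\infty$,
$$N_{T}-N_{\gamma_{T}}=E\left[N_{\infty}1_{\{\overline{g}<T\}}|\mathcal{F}_{T}\right].$$

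Finally I would simplify the left-hand side. Since $v_{\gamma_T}=v_T$, we get $N_{\gamma_T}=f(v_{\gamma_T})M_{\gamma_T}=f(v_T)M_{\gamma_T}$, so
$$N_T-N_{\gamma_T}=f(v_T)M_T-f(v_T)M_{\gamma_T}=f(v_T)(M_T-M_{\gamma_T}),$$
and on the right-hand side $N_\infty=f(v_\infty)M_\infty$. This gives exactly
$$f(v_{T})(M_{T}-M_{\gamma_{T}})=E\left[f(v_{\infty})M_{\infty}1_{\{\overline{g}<T\}}|\mathcal{F}_{T}\right],$$
which is the claim. The main obstacle I anticipate is the uniform-integrability bookkeeping for $N=f(v_{\cdot})M$: one must be careful that $F(v_{\cdot})$ does not destroy integrability and that the terminal variable is genuinely $f(v_\infty)M_\infty$ rather than something requiring a separate limiting argument; this is where the fact that $v$ is frozen after $\overline{g}$ (because $dv$ is carried by $H$ and $\overline{g}=\sup H$) does the real work, reducing everything on $\{t>\overline{g}\}$ to a constant multiple of the uniformly integrable piece $M_{\cdot+\overline{g}}$ already analyzed in the proof of Theorem~\ref{opt}.
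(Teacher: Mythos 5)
Your overall strategy --- apply Theorem \ref{opt} to the new process $N=f(v_{\cdot})M$ rather than to $M$ itself --- is more roundabout than necessary, and it contains two genuine gaps. First, Corollary \ref{c3} requires $M$ to vanish on all of $H$, whereas the hypothesis here is only $M_{\overline{g}}=0$, i.e.\ vanishing at the single last point of $H$; your parenthetical claim that ``$M_{\overline{g}}=0$ is what forces this'' is not correct. This particular gap is repairable: since $\langle D,M\rangle\equiv 0$ gives that $dv$ is carried by $H$ (Corollary \ref{car}), the predictable process $k=f(v)$ satisfies $k_{\gamma_t}=f(v_{\gamma_t})=f(v_t)$, so Proposition \ref{p1} (which does not require vanishing on $H$) already shows that $f(v_{\cdot})M$ is a $(\Qv,\P)$-local martingale. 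Second, and more seriously, Theorem \ref{opt} requires $N$ to be a \emph{uniformly integrable} $(\Qv,\P)$-martingale, i.e.\ $f(v_{\cdot})DM$ must be a uniformly integrable $\P$-martingale. You flag this yourself as the main obstacle but never establish it, and for a merely locally bounded $f$ it is not clear: the fact that $v$ is frozen after $\overline{g}$ controls $f(v_t)$ for $t>\overline{g}$ but gives no uniform bound on $f(v_t)$ before $\overline{g}$, so the family $\{f(v_\tau)D_\tau M_\tau\}$ over stopping times $\tau$ need not be uniformly integrable even though $\{D_\tau M_\tau\}$ is. As written, the key hypothesis of Theorem \ref{opt} is unverified for $N$.

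The paper's proof sidesteps all of this. It applies Corollary \ref{c5} (equivalently Theorem \ref{opt}) to $M$ itself to get $M_T-M_{\gamma_T}=E\left[M_\infty 1_{\{\overline{g}<T\}}|\mathcal{F}_T\right]$, multiplies both sides by the $\mathcal{F}_T$-measurable factor $f(v_T)$, pulling it inside the conditional expectation, and then replaces $f(v_T)$ by $f(v_\infty)$ on the event $\{\overline{g}<T\}$, using that $dv$ is carried by $H$ and $\overline{g}=\sup H$, so that $v_T=v_\infty$ there. No new martingale is constructed and no further uniform-integrability verification is needed. I would recommend rewriting your argument along those lines.
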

\begin{proof}
We have from Corollary \ref{c5} that for every stopping time $0<T<\infty$,
$$M_{T}-M_{\gamma_{T}}=E\left[M_{\infty}1_{\{\overline{g}<T\}}|\mathcal{F}_{T}\right].$$
Then,
$$f(v_{T})(M_{T}-M_{\gamma_{T}})=E\left[f(v_{T})M_{\infty}1_{\{\overline{g}<T\}}|\mathcal{F}_{T}\right].$$
Furthermore, we can see from Corollary \ref{car} that $dv_{t}$ is carried by $H$ since $\langle D,M\rangle\equiv0$. 
That is, 
$$f(v_{T})1_{\{\overline{g}<T\}}=f(v_{\infty})1_{\{\overline{g}<T\}}.$$
Hence,
$$f(v_{T})(M_{T}-M_{\gamma_{T}})=E\left[f(v_{\infty})M_{\infty}1_{\{\overline{g}<T\}}|\mathcal{F}_{T}\right].$$
\end{proof}

\begin{coro}
Let $M=m+v$ be a uniformly integrable $\left((\Qv,\P),(\mathcal{F}_{t})_{t\geq0}\right)-$ martingale vanishing on $H$ such that $\langle D,M\rangle\equiv0$. Then, for any locally bounded Borel function $f$ and for every stopping time $0<T<\infty$,
$$f(v_{T})M_{T}=E\left[f(v_{\infty})M_{\infty}1_{\{\overline{g}<T\}}|\mathcal{F}_{T}\right].$$
\end{coro}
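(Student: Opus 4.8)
The plan is to apply the previous corollary, Corollary \ref{c6}, in the special case where $M$ itself vanishes on $H$, so that the correction term $M_{\gamma_{T}}$ disappears. First I would observe that the hypotheses here are exactly those of Corollary \ref{c6} together with the extra assumption that $M$ vanishes on $H$; in particular $M_{\overline{g}}=0$ holds automatically since $\overline{g}\in H$ and $M$ is null on $H$. So Corollary \ref{c6} applies verbatim and yields, for every stopping time $0<T<\infty$,
$$f(v_{T})(M_{T}-M_{\gamma_{T}})=E\left[f(v_{\infty})M_{\infty}1_{\{\overline{g}<T\}}\,|\,\mathcal{F}_{T}\right].$$

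Next I would dispose of the term $M_{\gamma_{T}}$. Since $M$ vanishes on $H$ and $\gamma_{T}=\sup\{s\leq T:D_{s}=0\}\in H$ (using that $D$ is continuous, so $H$ is closed), we get $M_{\gamma_{T}}=0$ for every $t$, hence $M_{T}-M_{\gamma_{T}}=M_{T}$. Substituting this into the displayed identity gives exactly
$$f(v_{T})M_{T}=E\left[f(v_{\infty})M_{\infty}1_{\{\overline{g}<T\}}\,|\,\mathcal{F}_{T}\right],$$
which is the claim. Alternatively, one can bypass Corollary \ref{c6} and argue directly from Corollary \ref{c5}: that corollary gives $M_{T}=E[M_{\infty}1_{\{\overline{g}<T\}}|\mathcal{F}_{T}]$, multiply both sides by the $\mathcal{F}_{T}$-measurable quantity $f(v_{T})$, and then use Corollary \ref{car} (which applies because $\langle D,M\rangle\equiv0$) to conclude that $dv_{t}$ is carried by $H$, so that $v$ is constant after $\overline{g}$ and therefore $f(v_{T})1_{\{\overline{g}<T\}}=f(v_{\infty})1_{\{\overline{g}<T\}}$.

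There is essentially no obstacle here; the statement is a direct corollary and the only point requiring a word of justification is the identity $f(v_{T})1_{\{\overline{g}<T\}}=f(v_{\infty})1_{\{\overline{g}<T\}}$, which rests on $dv_{t}$ being carried by $H$ (Corollary \ref{car}) and on the fact that $H\subset[0,\overline{g}]$ so that on $\{\overline{g}<T\}$ no mass of $dv$ is added between $\overline{g}$ and $T$, whence $v_{T}=v_{\overline{g}}=v_{\infty}$ on that event. I would state the proof in the short form: invoke Corollary \ref{c6}, note $M_{\gamma_{T}}=0$ because $M$ vanishes on the closed set $H\ni\gamma_{T}$, and conclude.
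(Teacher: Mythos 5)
Your proposal is correct and follows essentially the same route as the paper: apply Corollary \ref{c6} (whose hypothesis $M_{\overline{g}}=0$ holds since $M$ vanishes on $H$) and then drop the term $M_{\gamma_{T}}$, which is zero because $\gamma_{T}\in H$ and $M$ is null on $H$. The alternative you sketch via Corollary \ref{c5} and Corollary \ref{car} is also sound, but the main argument matches the paper's own proof.
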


\begin{proof}
An application of Corollary \ref{c6} gives for every stopping time $0<T<\infty$,
$$f(v_{T})(M_{T}-M_{\gamma_{T}})=E\left[f(v_{\infty})M_{\infty}1_{\{\overline{g}<T\}}|\mathcal{F}_{T}\right].$$
Then,
$$f(v_{T})M_{T}=E\left[f(v_{\infty})M_{\infty}1_{\{\overline{g}<T\}}|\mathcal{F}_{T}\right]$$
since $M$ vanishes on $H$.
\end{proof}

\section{Some contributions to the study of the class \texorpdfstring{$\Sigma(H)$}{sigma(H)}}\label{s4}

This section is devoted to the study of the class $\Sigma(H)$. Note that it is appeared for the first time in \cite{f} and studied in \cite{f,e,o}. Here, we provide new properties and we prove that the relevant class of stochastic processes can be characterized as we do it for $(\Qv,\P)$- martingales in Theorem \ref{*mart} and Theorem \ref{alpha}. So, we start by giving the following definition:

\begin{defn}
Let $X$ be a $\Pv$- semi-martingale, which decomposes as:
$$X_{t}=M_{t}+A_{t}.$$
We say that $X$ is of class $\Sigma(H)$, if:
\begin{enumerate}
	\item $M$ is a càdlàg $(\Qv,\Pv)$- local martingale with $M_{0}=0$;
	\item $A$ is a continuous finite variation process with $A_{0}=0$;
	\item the measure $(dA_{t})$ is carried by the set $\{t: X_{t}=0\}\cup H$.
\end{enumerate}
\end{defn}

\subsection{Some new properties} 

Now, we shall derive some new properties satisfied by stochastic processes of the class $\Sigma(H)$.

\begin{prop}
Let $X$ be a stochastic process of the class $\Sigma(H)$. Hence, $\left(\int_{0}^{t}{X_{s}dX_{s}}: t\geq0\right)$ is a \\$(\Qv,\P)-$ local martingale.
\end{prop}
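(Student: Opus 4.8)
The plan is to write $X = M + A$ with $M$ a $(\Qv,\P)$-local martingale and $A$ a finite variation process whose measure $dA_t$ is carried by $\{t : X_t = 0\} \cup H$, and then to compute $\int_0^t X_s \, dX_s$ by splitting along the decomposition of $X$. Since $X_s \, dX_s = X_s \, dM_s + X_s \, dA_s$, the stochastic integral part $\int_0^t X_s \, dM_s$ is, modulo checking that $X$ is locally bounded (it is continuous, hence locally bounded on $[0,t]$), a genuine $\P$-stochastic integral against a $\P$-local martingale; I would then want to argue it is in fact a $(\Qv,\P)$-local martingale, and this is where I expect to use Proposition \ref{def} or rather a direct integration-by-parts computation: write $N_t := \int_0^t X_s \, dX_s$, note $N = n + w$ where $n = \int_0^\cdot X_s \, dm_s$ (with $M = m + v$, $m$ a $\P$-martingale) and $w = \int_0^\cdot X_s \, dv_s + \int_0^\cdot X_s \, dA_s$ is the finite variation part, and check the criterion $\int_0^t D_s \, dw_s + \langle N, D\rangle_t = 0$.

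The key observation driving the finite-variation bookkeeping is that $dA_t$ is carried by $\{X_t = 0\} \cup H$, so $\int_0^t X_s \, dA_s$ picks up a contribution only on $H$ (on $\{X_s = 0\}$ the integrand vanishes); hence $d\!\left(\int_0^\cdot X_s \, dA_s\right)$ is carried by $H$, and in particular $\int_0^t D_s X_s \, dA_s = 0$ since $D \equiv 0$ on $H$. Similarly, from Proposition \ref{def} applied to $M$ we have $\int_0^t D_s \, dv_s + \langle M, D\rangle_t = 0$; multiplying the differential identity by the locally bounded process $X_s$ (this is the balayage-flavoured step, though here it is just linearity of Stieltjes/covariation integrals) gives $\int_0^t X_s D_s \, dv_s + \int_0^t X_s \, d\langle M, D\rangle_s = 0$. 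Now $\langle N, D\rangle_t = \langle n, D\rangle_t = \int_0^t X_s \, d\langle m, D\rangle_s = \int_0^t X_s \, d\langle M, D\rangle_s$, so the total $\int_0^t D_s \, dw_s + \langle N, D\rangle_t$ equals $\int_0^t X_s D_s \, dv_s + \int_0^t D_s X_s \, dA_s + \int_0^t X_s \, d\langle M, D\rangle_s = 0 + 0$, which is exactly the criterion of Proposition \ref{def} for $N$ to be a $(\Qv,\P)$-local martingale.

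Alternatively, and perhaps more cleanly, I would do a single integration by parts on $D_t N_t$ directly: $D_t N_t = \int_0^t N_s \, dD_s + \int_0^t D_s \, dN_s + \langle D, N\rangle_t$, then substitute $dN_s = X_s \, dM_s + X_s \, dA_s = X_s \, dm_s + X_s \, dv_s + X_s \, dA_s$ and $\langle D, N\rangle_s$ as above, collect the two genuine martingale terms $\int_0^t N_s \, dD_s + \int_0^t D_s X_s \, dm_s$, and show the remaining drift terms $\int_0^t X_s(D_s \, dv_s + d\langle D, M\rangle_s) + \int_0^t D_s X_s \, dA_s$ vanish — the first bracket by Proposition \ref{def} for $M$ (the process $X$ being locally bounded so the integral of a null measure is null), the second because $D$ vanishes on $H$ and $dA$ restricted to $\{X=0\}$ meets a vanishing integrand. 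This shows $D N$ is a $\P$-local martingale, i.e. $N$ is a $(\Qv,\P)$-local martingale. The main obstacle, such as it is, is purely a matter of care: making sure the "multiply the null-measure identity of Proposition \ref{def} by the locally bounded integrand $X$" step is legitimate (it is, since $X$ continuous implies $\int_0^t X_s \, d\mu_s = 0$ whenever $\mu \equiv 0$ on $[0,t]$), and correctly identifying that $d\langle D, N\rangle_s = X_s \, d\langle D, M\rangle_s$ because the finite variation pieces $v$ and $A$ do not contribute to the bracket.
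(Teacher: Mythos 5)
Your proposal is correct and follows essentially the same route as the paper: an integration by parts on $D_tN_t$ with $N_t=\int_0^t X_s\,dX_s$, killing $\int_0^t D_sX_s\,dA_s$ because $dA$ is carried by $\{X=0\}\cup H=\{DX=0\}$ where the integrand $D_sX_s$ vanishes. The only cosmetic difference is in the last step: you split $M=m+v$ and invoke the criterion of Proposition \ref{def} (integrating the null measure $D_s\,dv_s+d\langle D,M\rangle_s$ against the locally bounded $X$), whereas the paper regroups $\int_0^t D_sX_s\,dM_s+\int_0^t X_s\,d\langle D,M\rangle_s$ as $\int_0^t X_s\,d(D_sM_s)-\int_0^t X_sM_s\,dD_s$ and reads off stochastic integrals against the $\P$-local martingales $DM$ and $D$ directly; both are legitimate.
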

\begin{proof}
Let us set $Z_{t}=\int_{0}^{t}{X_{s}dX_{s}}$ and $X=M+V\in\Sigma(H)$. An integration by part gives the following
$$\hspace{-2cm}D_{t}Z_{t}=\int_{0}^{t}{Z_{s}dD_{s}}+\int_{0}^{t}{D_{s}X_{s}dX_{s}}+\int_{0}^{t}{X_{s}d\langle D,X\rangle_{s}}$$
$$\hspace{1.5cm}=\int_{0}^{t}{Z_{s}dD_{s}}+\int_{0}^{t}{D_{s}X_{s}dM_{s}}+\int_{0}^{t}{D_{s}X_{s}dV_{s}}+\int_{0}^{t}{X_{s}d\langle D,M\rangle_{s}}.$$
But, 
$$\int_{0}^{t}{D_{s}X_{s}dV_{s}}=0$$
since $dV_{t}$ is carried by $\{t\geq0:D_{t}X_{t}=0\}$. That implies that
$$D_{t}Z_{t}=\int_{0}^{t}{Z_{s}dD_{s}}+\int_{0}^{t}{D_{s}X_{s}dM_{s}}+\int_{0}^{t}{X_{s}d\langle D,M\rangle_{s}}.$$
That is,
$$D_{t}Z_{t}=\int_{0}^{t}{Z_{s}dD_{s}}+\int_{0}^{t}{X_{s}d(D_{s}M_{s})}-\int_{0}^{t}{X_{s}M_{s}dD_{s}}$$
since $d(D_{s}M_{s})=D_{s}dM_{s}+M_{s}dD_{s}+d\langle D,M\rangle_{s}$. Hence, $DZ$ is a $\P-$ local martingale since $D$ and $MD$ are too. Consequently, $Z$ is a $(\Qv,\P)-$ local martingale.
\end{proof}

\begin{prop}\label{1}
 Let $X=M+V$ be a stochastic process of class $\Sigma(H)$. Then, for any bounded predictable process $K$, 
 $$(K_{g_{t}}\cdot X_{t})\in \Sigma(H)$$
where, $g_{t}=\sup\{s\leq t: X_{s}=0\}$.
\end{prop}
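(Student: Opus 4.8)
\textbf{Proof plan for Proposition \ref{1}.}

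The plan is to verify the three defining conditions of the class $\Sigma(H)$ for the process $Y_t := K_{g_t}\cdot X_t$, where $X = M + V \in \Sigma(H)$. The natural first step is to apply the progressive balayage formula (Proposition \ref{balpro}) to the continuous semimartingale $X$ with the bounded process $K$, recalling that $g_t = \sup\{s \le t : X_s = 0\}$ plays the role of $\gamma'_t$. This gives a decomposition
$$K_{g_t} X_t = K_0 X_0 + \int_0^t {}^p K_{g_s}\, dX_s + R_t = \int_0^t {}^p K_{g_s}\, dX_s + R_t,$$
since $X_0 = 0$, where $R$ is continuous, adapted, of bounded variation, and $dR_t$ is carried by $\{s : X_s = 0\}$. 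Splitting $dX_s = dM_s + dV_s$, the candidate martingale part is $N_t := \int_0^t {}^p K_{g_s}\, dM_s$ and the candidate finite-variation part is $A_t := \int_0^t {}^p K_{g_s}\, dV_s + R_t$.

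The second step is to check condition 1: that $N$ is a $(\Qv,\P)$-local martingale with $N_0 = 0$. Clearly $N_0 = 0$, and $N$ is a stochastic integral of a bounded predictable process against the $(\Qv,\P)$-local martingale $M$; I would justify that such an integral is again a $(\Qv,\P)$-local martingale either by invoking Proposition \ref{def} (computing $D_t N_t$ by integration by parts and using $\int_0^t D_s\, d(\text{fin.\ var.\ part of } N)_s + \langle N, D\rangle_t = {}^p K_{g_\cdot}$-integral of $(\int D\, dv_M + \langle M,D\rangle)$, which vanishes because $M$ is a $(\Qv,\P)$-local martingale) or by citing the stability of $(\Qv,\P)$-local martingales under integration by bounded predictable integrands, which follows since $DM$ is a $\P$-local martingale and $\int {}^p K_{g_s}\, d(DM)_s$ differs from $D_t N_t$ by $\P$-local martingale terms. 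Condition 2 is immediate: $\int_0^t {}^p K_{g_s}\, dV_s$ is continuous of bounded variation (as ${}^p K$ is bounded and $V$ has bounded variation) and $R$ is continuous of bounded variation by Proposition \ref{balpro}, and both vanish at $0$.

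The third and most delicate step is condition 3: that $dA_t$ is carried by $\{t : Y_t = 0\} \cup H$. The term $dR_t$ is carried by $\{t : X_t = 0\}$; on this set $Y_t = K_{g_t} X_t = 0$, so $dR_t$ is carried by $\{t : Y_t = 0\}$. For the term ${}^p K_{g_s}\, dV_s$: since $X \in \Sigma(H)$, $dV_s$ is carried by $\{s : X_s = 0\} \cup H$, hence so is ${}^p K_{g_s}\, dV_s$; and again $\{s : X_s = 0\} \subset \{s : Y_s = 0\}$. Combining, $dA_t$ is carried by $\{t : Y_t = 0\} \cup H$, which is exactly condition 3. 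The main obstacle I anticipate is the bookkeeping in step two — making fully rigorous the claim that a bounded-predictable stochastic integral against a $(\Qv,\P)$-local martingale is again one, and correctly handling the predictable projection ${}^p K_{g_\cdot}$ versus $K_{g_\cdot}$ when passing between the balayage identity and the integration-by-parts computation with $D$; one should double-check that replacing $K_{g_s}$ by ${}^p K_{g_s}$ in the $dV_s$ integral does not alter which set carries the measure (it does not, since $V$ is a finite-variation process and changing the integrand on a predictable-projection-negligible set is harmless for the support statement, or one simply works directly with the progressive integrand as in Proposition \ref{balpro}).
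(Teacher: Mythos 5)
Your proof is correct and follows the same basic strategy as the paper --- apply a balayage formula to $X$ at the zeros of $X$ and read off the three defining conditions --- but you take a heavier route than necessary. Since the hypothesis is that $K$ is a bounded \emph{predictable} process, the paper simply invokes the predictable-case balayage formula, which gives $K_{g_t}X_t=\int_0^t K_{g_s}\,dM_s+\int_0^t K_{g_s}\,dV_s$ with no remainder term $R$ and no predictable projection; the decomposition into the required martingale and finite-variation parts is then immediate, and the support condition for $\int_0^t K_{g_s}\,dV_s$ is inherited directly from $dV$. By instead using the progressive-case formula (Proposition \ref{balpro}) you import the projection ${}^pK_{g_\cdot}$ and the process $R$, and then have to spend effort arguing that $dR_t$ and the projected integrand do not spoil condition 3 --- all of which evaporates under the predictable hypothesis (for predictable $K$ one has $R\equiv 0$ and ${}^pK_{g_\cdot}=K_{g_\cdot}$ in this setting). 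Your version does have one virtue the paper's terse proof lacks: you explicitly flag that $\int_0^t K_{g_s}\,dM_s$ being a $(\Qv,\P)$-local martingale requires justification (the paper just writes ``the assertion follows''), and your proposed argument via Proposition \ref{def} or via $D_tN_t$ differing from $\int_0^t K_{g_s}\,d(D_sM_s)$ by $\P$-local-martingale terms is exactly the computation carried out in Lemma \ref{l1} for the integrand $Z^\alpha$, so that step is sound.
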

 \begin{proof}
 An application of the balayage formula gives
 $$K_{g_{t}}X_{t}=\int_{0}^{t}{K_{g_{s}}dX_{s}}=\int_{0}^{t}{K_{g_{s}}dM_{s}}+\int_{0}^{t}{K_{g_{s}}dV_{s}}$$
 and the assertion follows.
 \end{proof}

\begin{prop}\label{p2}
 Let $X=M+V$ be a stochastic process of class $\Sigma(H)$. Then, for any bounded predictable process $K$, 
 $$(K_{\gamma_{t}}\cdot X_{t})\in \Sigma(H)$$
where, $\gamma_{t}=\sup\{s\leq t: D_{s}X_{s}=0\}$.
\end{prop}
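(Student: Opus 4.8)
The plan is to mimic the proof of Proposition \ref{1} but to replace the balayage formula at the zeros of $X$ by the balayage formula at the zeros of the product $DX$, i.e. at the set $\{s\le t: D_sX_s=0\}=H\cup\{s:X_s=0\}$. First I would note that $X=M+V\in\Sigma(H)$ means precisely that $DM$ is a $\P$-local martingale vanishing on $H$ and that $dV_t$ is carried by $\{t: X_t=0\}\cup H=\{t: D_tX_t=0\}$; in particular the continuous finite-variation process $DX=DM+DV$ is, after an integration by parts, of the form (local martingale) $+$ (finite-variation process carried by $\{D_sX_s=0\}$), and it vanishes on that zero set. So $DX$ belongs to the class $\mathcal{R}(\mathcal{H})$-type setting with $\mathcal{H}=\{s: D_sX_s=0\}$, which is exactly the situation in which the balayage formula (predictable case) applies with $\gamma_t=\sup\{s\le t: D_sX_s=0\}$.

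The key computation: for a bounded predictable process $K$, the predictable balayage formula applied to the continuous semimartingale $DX$ (whose zero set is $\{D_sX_s=0\}$, so that $\gamma_t$ is the associated last-exit process) gives
$$K_{\gamma_t}D_tX_t=K_0D_0X_0+\int_0^t K_{\gamma_s}\,d(D_sX_s).$$
Now decompose $d(D_sX_s)=d(D_sM_s)+d(D_sV_s)$. The integral $\int_0^t K_{\gamma_s}\,d(D_sM_s)$ is a $\P$-local martingale since $DM$ is; and $\int_0^t K_{\gamma_s}\,d(D_sV_s)$ is a finite-variation process whose differential is carried by $\{D_sX_s=0\}$ (because $dV_s$, hence $d(D_sV_s)=D_s\,dV_s+V_s\,dD_s+d\langle D,V\rangle_s$... here $V$ is finite variation so $\langle D,V\rangle=0$, and both terms are carried by $\{D_sX_s=0\}$). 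Therefore $K_{\gamma_\cdot}D_\cdot X_\cdot = (\P\text{-local martingale})+(\text{FV process carried by }\{D_sX_s=0\})$ and vanishes on $\{D_sX_s=0\}$. Writing $K_{\gamma_t}X_t=(K_{\gamma_t}X_t)$ and observing $D_t(K_{\gamma_t}X_t)=K_{\gamma_t}D_tX_t$, we read off that the $(\Qv,\P)$-local-martingale part of $K_{\gamma_\cdot}X$ is the process $N$ with $D_\cdot N=\int_0^\cdot K_{\gamma_s}\,d(D_sM_s)$ and $N_0=0$, while its finite-variation part $A'$ satisfies $dA'_t=K_{\gamma_t}\,dV_t$, which is carried by $\{t: X_t=0\}\cup H$. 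Hence all three defining conditions of $\Sigma(H)$ hold for $K_{\gamma_\cdot}\cdot X$.

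The main obstacle I anticipate is bookkeeping at $t=0$ and the precise justification that $DX$ is in the right class to invoke the predictable balayage formula — one must check that $DX$ is genuinely a continuous semimartingale with zero set exactly $\{D_sX_s=0\}$ and that the martingale/finite-variation split of $d(DX)$ separates cleanly along the lines above; the identity $d(D_sX_s)=d(D_sM_s)+D_s\,dV_s$ (using $\langle D,V\rangle\equiv0$ since $V$ has finite variation) is what makes this work. Once that is in place, the conclusion is immediate, so the proof is short: I would simply write "An application of the balayage formula (predictable case, at the zeros of $DX$) gives $K_{\gamma_t}X_t=\int_0^t K_{\gamma_s}\,dM_s+\int_0^t K_{\gamma_s}\,dV_s$ in the $(\Qv,\P)$ sense, and the assertion follows since $dV_t$ is carried by $\{t:X_t=0\}\cup H$," exactly parallel to the proof of Proposition \ref{1}.
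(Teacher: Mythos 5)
Your overall strategy is exactly the paper's: apply the predictable balayage formula to $DX$ at the zero set $\{s:D_sX_s=0\}$, split $d(D_sX_s)$ into $d(D_sM_s)+d(D_sV_s)$, and identify the finite-variation part of $K_{\gamma_\cdot}X$ as $\int_0^{\cdot}K_{\gamma_s}\,dV_s$, whose differential is carried by $\{t:X_t=0\}\cup H$. So the route is the right one and the conclusion is the paper's.

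However, two intermediate claims as you state them are false and need the repair that the paper actually carries out. First, in $d(D_sV_s)=D_s\,dV_s+V_s\,dD_s$ the term $V_s\,dD_s$ is a local-martingale differential, not a finite-variation one, and it is certainly not ``carried by'' $\{D_sX_s=0\}$; only $D_s\,dV_s$ is. The correct sorting is that $\int_0^tK_{\gamma_s}\,d(D_sM_s)+\int_0^tK_{\gamma_s}V_s\,dD_s$ is the local-martingale piece and $\int_0^tK_{\gamma_s}D_s\,dV_s$ is the finite-variation piece. Second, your identification $D_\cdot N=\int_0^{\cdot}K_{\gamma_s}\,d(D_sM_s)$ for $N=K_{\gamma_\cdot}X-\int_0^{\cdot}K_{\gamma_s}\,dV_s$ is not an identity. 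What one actually needs, and what the paper proves, is that $D_tN_t$ is a local martingale: setting $Y_t=\int_0^tK_{\gamma_s}\,dV_s$, integration by parts gives $D_tY_t=\int_0^tY_s\,dD_s+\int_0^tD_sK_{\gamma_s}\,dV_s$, so substituting back yields
$$D_t\bigl(K_{\gamma_t}X_t-Y_t\bigr)=\int_0^t{K_{\gamma_s}\,d(D_sM_s)}+\int_0^t{K_{\gamma_s}V_s\,dD_s}-\int_0^t{Y_s\,dD_s},$$
which is a $\P$-local martingale but has the two extra $dD$-integrals you dropped. With that one additional integration by parts your argument closes and coincides with the paper's proof.
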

 \begin{proof}
 An application of the balayage formula gives
 $$\hspace{-4cm}K_{\gamma_{t}}D_{t}X_{t}=\int_{0}^{t}{K_{\gamma_{s}}d(D_{s}X_{s})}$$
$$\hspace{0.5cm}=\int_{0}^{t}{K_{\gamma_{s}}d(D_{s}M_{s})}+\int_{0}^{t}{K_{\gamma_{s}}d(D_{s}V_{s})}$$
$$\hspace{3cm}=\int_{0}^{t}{K_{\gamma_{s}}d(D_{s}M_{s})}+\int_{0}^{t}{K_{\gamma_{s}}V_{s}dD_{s}}+\int_{0}^{t}{K_{\gamma_{s}}D_{s}dV_{s}}.$$
Let us set 
$$Y_{t}=\int_{0}^{t}{K_{\gamma_{s}}dV_{s}}.$$
One has,
$$D_{t}Y_{t}=\int_{0}^{t}{Y_{s}dD_{s}}+\int_{0}^{t}{D_{s}K_{\gamma_{s}}dV_{s}}.$$
This entails,
$$\int_{0}^{t}{K_{\gamma_{s}}D_{s}dV_{s}}=D_{t}Y_{t}-\int_{0}^{t}{Y_{s}dD_{s}}.$$
Then,
$$K_{\gamma_{t}}D_{t}X_{t}=\int_{0}^{t}{K_{\gamma_{s}}d(D_{s}M_{s})}+\int_{0}^{t}{K_{\gamma_{s}}V_{s}dD_{s}}+D_{t}Y_{t}-\int_{0}^{t}{Y_{s}dD_{s}}.$$
That is,
$$D_{t}(K_{\gamma_{t}}X_{t}-Y_{t})=\int_{0}^{t}{K_{\gamma_{s}}d(D_{s}M_{s})}+\int_{0}^{t}{K_{\gamma_{s}}V_{s}dD_{s}}-\int_{0}^{t}{Y_{s}dD_{s}}.$$
Therefore, $(D_{t}K_{\gamma_{t}}X_{t}-D_{t}Y_{t};t\geq0)$ is a $\P-$ local martingale. Which means that 
$$M^{'}=\left(K_{\gamma_{t}}X_{t}-\int_{0}^{t}{K_{\gamma_{s}}dV_{s}};t\geq0\right)$$
 is a $(\Qv,\P)-$ local martingale. Consequently, $(K_{\gamma_{t}}\cdot X_{t})\in \Sigma(H)$.
 \end{proof}

\begin{coro}
Let $X=M+V$ be a stochastic process of class $\Sigma(H)$.  Then, for any locally bounded borel function $f$, $f(V)X\in \Sigma(H)$ and its finite variation part is $\int_{0}^{t}{f(V_{s})dV_{s}}$. 
\end{coro}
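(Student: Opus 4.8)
The plan is to obtain this corollary as a direct specialization of Proposition \ref{p2}, taking the bounded predictable process $K$ there to be $f(V)$ itself. First I would check that $f(V)$ is a legitimate choice of integrand: since $V$ is continuous and adapted it is predictable, and composing it with the Borel function $f$ keeps it predictable, while local boundedness of $f$ makes $f(V)$ locally bounded, so after the usual localization by stopping times Proposition \ref{p2} applies and yields that $\bigl(f(V_{\gamma_t})X_t;t\ge0\bigr)\in\Sigma(H)$ with finite variation part $\int_{0}^{t}{f(V_{\gamma_s})\,dV_s}$, where $\gamma_t=\sup\{s\le t:D_sX_s=0\}$ as in that proposition.

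The second ingredient is the observation that $V$ is flat off the set $\{D X=0\}$. Indeed, by the definition of $\Sigma(H)$ the measure $dV_t$ is carried by $\{t:X_t=0\}\cup H=\{t:X_t=0\}\cup\{t:D_t=0\}$, which is contained in $\{t:D_tX_t=0\}$. Hence on each interval $(\gamma_t,t)$ — on which $D_sX_s\neq0$ by the very definition of $\gamma_t$ — the measure $dV$ vanishes, so by continuity $V$ is constant on $[\gamma_t,t]$ and $V_{\gamma_t}=V_t$ for every $t\ge0$. The same remark applied under the integral sign gives $f(V_{\gamma_s})=f(V_s)$ for $dV_s$-almost every $s$, because $dV_s$ is carried by $\{s:\gamma_s=s\}$.

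Combining the two points, $f(V_{\gamma_t})X_t=f(V_t)X_t$ is in $\Sigma(H)$ and its finite variation part $\int_{0}^{t}{f(V_{\gamma_s})\,dV_s}$ equals $\int_{0}^{t}{f(V_s)\,dV_s}$, which is exactly the assertion. I do not expect a genuine obstacle here; the only points requiring a little care are the predictability and local-boundedness bookkeeping needed to invoke Proposition \ref{p2} with $K=f(V)$, and the elementary but essential flatness identity $V_{\gamma_\cdot}=V_\cdot$ that turns the balayed integrand $f(V_{\gamma_s})$ back into $f(V_s)$. (Alternatively, one could reprove the statement from scratch by an integration by parts on $D_t f(V_t)X_t$ exactly along the lines of the proof of Proposition \ref{p2}, isolating $\int_{0}^{t}{D_sf(V_s)\,dV_s}$ as the extra finite variation term; the argument via Proposition \ref{p2} is shorter.)
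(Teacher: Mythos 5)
Your argument is correct and is precisely the derivation the paper intends: the corollary is stated without proof immediately after Proposition \ref{p2}, as the specialization $K=f(V)$ (after localization to handle local boundedness), combined with the flatness identity $V_{\gamma_t}=V_t$ coming from the fact that $dV$ is carried by $\{t: D_tX_t=0\}$. This mirrors exactly how the paper proves the analogous Corollary \ref{c3} for $(\Qv,\P)$-local martingales, so nothing further is needed.
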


\subsection{New characterization results of the class \texorpdfstring{$\Sigma(H)$}{sigma(H)}}\label{6}

Now, we shall provide new ways to characterize stochastic processes of the class $\Sigma(H)$. The results of the present sub-section are inspired by Theorem \ref{*mart} and Theorem \ref{alpha}.

\begin{theorem}\label{abs}
Let $X$ be a continuous semi-martingale. Then, the following are equivalent:
\begin{enumerate}
	\item $X\in\Sigma(H)$
	\item $|X|\in\Sigma(H)$
\end{enumerate}
\end{theorem}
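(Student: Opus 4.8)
The statement to prove is the equivalence $X\in\Sigma(H)\Leftrightarrow|X|\in\Sigma(H)$ for a continuous semi-martingale $X$. The plan is to mimic closely the proof of Theorem \ref{*mart}, replacing the condition ``$M$ is a $(\Qv,\P)$-local martingale'' by ``$X$ is of class $\Sigma(H)$'' throughout. Write $X=M+V$ for the canonical decomposition of $X$ as a $\P$-semi-martingale, where $M$ is the $\P$-local martingale part and $V$ the finite variation part. The key point to keep in mind is that membership in $\Sigma(H)$ controls \emph{where} the finite variation part is carried (on $\{t: X_t=0\}\cup H$), rather than forcing it to cancel against $\langle M,D\rangle$; so the bracket term $\langle D,\cdot\rangle$ will have to be handled separately, and I expect this to be the main obstacle.

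\textbf{Direction $(1)\Rightarrow(2)$.} Assume $X=M+V\in\Sigma(H)$. Apply Tanaka's formula: $|X_t|=\int_0^t{\rm sgn}(X_s)\,dX_s+L_t^0(X)=\int_0^t{\rm sgn}(X_s)\,dM_s+\int_0^t{\rm sgn}(X_s)\,dV_s+L_t^0(X)$. The martingale part of $|X|$ is then $N_t:=\int_0^t{\rm sgn}(X_s)\,dM_s$, and I must check it is a $(\Qv,\P)$-local martingale; this follows because $D_tN_t$ differs from a stochastic integral against the $\P$-local martingale $D M$ only by bracket corrections, and the needed cancellation is exactly the content of the characterization in Proposition \ref{def} applied to $M$ (which is a $(\Qv,\P)$-local martingale by hypothesis). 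The finite variation part of $|X|$ is $\int_0^t{\rm sgn}(X_s)\,dV_s+L_t^0(X)$. Now $dL^0(X)$ is carried by $\{X_t=0\}$, hence by $\{|X|_t=0\}\cup H$; and $\mathrm{sgn}(X_s)\,dV_s$ is carried by the same set where $dV$ is carried, namely $\{X_t=0\}\cup H=\{|X|_t=0\}\cup H$. So condition 3 of the definition of $\Sigma(H)$ holds for $|X|$, giving $|X|\in\Sigma(H)$.

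\textbf{Direction $(2)\Rightarrow(1)$.} This is the harder direction and I would run it exactly as in Theorem \ref{*mart}: set $K_t=\liminf_{s\searrow t}\big(1_{\{X_s>0\}}-1_{\{X_s<0\}}\big)$, a bounded progressive process with predictable projection ${}^pK$, and use $X_t=K_{\gamma_t}|X_t|$ together with the progressive balayage formula (Proposition \ref{balpro}) to write $X_t=\int_0^t{}^p(K_{\gamma_s})\,d|X_s|+R_t$, where $dR_t$ is carried by $\{X_t=0\}$. Writing $|X|=m'+v'$ with $v'$ carried by $\{|X|_t=0\}\cup H$ (since $|X|\in\Sigma(H)$), we get a decomposition of $X$ whose martingale part is $\int_0^t{}^p(K_{\gamma_s})\,dm'_s$ and whose finite variation part is $\int_0^t{}^p(K_{\gamma_s})\,dv'_s+R_t$; both driving measures $dv'$ and $dR$ are carried by $\{X_t=0\}\cup H$, which handles condition 3. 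It remains to verify that the martingale part of $X$ is a genuine $(\Qv,\P)$-local martingale, i.e.\ that $D\cdot\int_0^\cdot{}^p(K_{\gamma_s})\,dm'_s$ is a $\P$-local martingale; here I would integrate $D_tX_t$ by parts, note that $\int_0^t D_s\,dR_s=0$ because $dR$ is carried by $H$, and invoke the characterization of $(\Qv,\P)$-local martingales (Proposition \ref{def}) applied to $|X|$, exactly as in the last lines of the proof of Theorem \ref{*mart}. The one subtlety, and the place I expect to spend the most care, is bookkeeping the quadratic covariation terms $\langle D,|X|\rangle$ versus $\langle D,X\rangle$ and confirming that no extra term survives outside $H$ --- but since $X$ and $|X|$ generate the same stochastic integrals against $dD$ up to sign on the complement of $\{X=0\}\supset$ nothing bad, and $D$ vanishes on $H$, these all reconcile.
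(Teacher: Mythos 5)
Your overall strategy is the paper's: Tanaka's formula for $(1)\Rightarrow(2)$, and the progressive balayage of $|X|$ against the sign process $K$ for $(2)\Rightarrow(1)$. Two of your statements are wrong as written, though, and need repair.

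First, the decomposition you start from. You call $X=M+V$ ``the canonical decomposition of $X$ as a $\P$-semi-martingale, where $M$ is the $\P$-local martingale part,'' but the decomposition defining $\Sigma(H)$ has $M$ a $(\Qv,\P)$-local martingale (itself of the form $m+v$ with $v$ of finite variation) and only $dV$ carried by $\{X_t=0\}\cup H$. If $M$ were literally the $\P$-local martingale part, then $V$ would absorb $v$, whose support is not controlled by the carrying condition (it is controlled only through Proposition \ref{def}, via cancellation against $\langle M,D\rangle$ after multiplication by $D$); your claim that ${\rm sgn}(X_s)\,dV_s$ is carried by $\{X_t=0\}\cup H$ would then be unjustified, and $DM$ would not be a $\P$-local martingale, breaking your verification for $N=\int{\rm sgn}(X)\,dM$. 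Since you also say ``$M$ is a $(\Qv,\P)$-local martingale by hypothesis,'' I take this as a slip, but only the $\Sigma(H)$ reading makes the argument work. With that reading your $(1)\Rightarrow(2)$ is the paper's computation, merely placing $\int{\rm sgn}(X)\,dV$ in the finite variation part rather than keeping it inside $\int{\rm sgn}(X)\,dX$ as the paper does; both are legitimate $\Sigma(H)$ decompositions of $|X|$.

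Second, in $(2)\Rightarrow(1)$ you assert that $\int_0^t D_s\,dR_s=0$ ``because $dR$ is carried by $H$.'' That is false here: the balayage remainder $R$ is carried by $\{t:X_t=0\}$, and unlike Theorem \ref{*mart} this theorem has no hypothesis $\{X=0\}\subset H$, so $\int D\,dR$ has no reason to vanish. You do not need it: $R$ belongs to the finite variation part of the candidate decomposition of $X$, where ``carried by $\{X_t=0\}\cup H$'' is exactly what condition 3 of the definition requires, and the only remaining point is that $\int_0^t{}^p(K_{g_s})\,dm'_s$ is a $(\Qv,\P)$-local martingale, which follows from $m'$ being one (integration by parts plus Proposition \ref{def} applied to $m'$; $R$ never enters). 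Finally, the balayage must be performed at $g_t=\sup\{s\le t:X_s=0\}$, the last zero of $X$, not at the paper's reserved $\gamma_t=\sup\{s\le t:D_s=0\}$; your $K_{\gamma_t}$ clashes with that notation even though your intent is clearly the former.
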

\begin{proof}
$(1)\Rightarrow(2)$ Assume that $X=M+V$ is a stochastic process of class $\Sigma(H)$. An application of Itô-Tanaka formula gives
$$|X_{t}|=\int_{0}^{t}{{\rm sgn}(X_{s})dX_{s}}+L_{t}^{0}(X).$$
By definition, $dL_{t}^{0}(X)$ is carried by $\{t\geq0: |X_{t}|=0\}$. Then, $dL_{t}^{0}(X)$ is carried by $\{t\geq0: D_{t}|X_{t}|=0\}$. Now, we shall show that the stochastic process defined by $Y_{t}=\int_{0}^{t}{{\rm sgn}(X_{s})dX_{s}}$ is a $(\Qv,\P)-$ local martingale. One has:
$$\hspace{-5.5cm}D_{t}Y_{t}=\int_{0}^{t}{D_{s}dY_{s}}+\int_{0}^{t}{Y_{s}dD_{s}}+\langle Y,D\rangle_{t}$$
$$\hspace{-1cm}=\int_{0}^{t}{{\rm sgn}(X_{s})D_{s}dX_{s}}+\int_{0}^{t}{Y_{s}dD_{s}}+\int_{0}^{t}{{\rm sgn}(X_{s})d\langle X,D\rangle_{s}}$$
$$\hspace{2.75cm}=\int_{0}^{t}{{\rm sgn}(X_{s})D_{s}dM_{s}}+\int_{0}^{t}{{\rm sgn}(X_{s})D_{s}dV_{s}}+\int_{0}^{t}{Y_{s}dD_{s}}+\int_{0}^{t}{{\rm sgn}(X_{s})d\langle M,D\rangle_{s}}.$$
But, $\int_{0}^{t}{{\rm sgn}(X_{s})D_{s}dV_{s}}=0$ since, $dV_{t}$ is carried by $\{t\geq0: D_{t}X_{t}=0\}=\{t\geq0: D_{t}{\rm sgn}(X_{t})=0\}$. 
Then, 
$$D_{t}Y_{t}=\int_{0}^{t}{{\rm sgn}(X_{s})D_{s}dM_{s}}+\int_{0}^{t}{Y_{s}dD_{s}}+\int_{0}^{t}{{\rm sgn}(X_{s})d\langle M,D\rangle_{s}}.$$
$$D_{t}Y_{t}=\int_{0}^{t}{{\rm sgn}(X_{s})D_{s}dM_{s}}+\int_{0}^{t}{{\rm sgn}(X_{s})M_{s}dD_{s}}-\int_{0}^{t}{{\rm sgn}(X_{s})M_{s}dD_{s}}+\int_{0}^{t}{Y_{s}dD_{s}}+\int_{0}^{t}{{\rm sgn}(X_{s})d\langle M,D\rangle_{s}}.$$
Thus,
$$D_{t}Y_{t}=\int_{0}^{t}{{\rm sgn}(X_{s})d(D_{s}M_{s})}+\int_{0}^{t}{(Y_{s}-{\rm sgn}(X_{s})M_{s})dD_{s}}.$$
Since $DM$ and $D$ are $\P-$ local martingales. Therefore, $DY$ is a $\P-$ local martingale. That is, $Y$ is a $(\Qv,\P)-$ local martingale. Consequently, $|X|\in\Sigma(H)$. 

$(2)\Rightarrow(1)$ Let us set 
 $$K_{t}=\lim_{s\searrow t}\inf{\left(1_{\{X_{s}>0\}}-1_{\{X_{s}<0\}}\right)}.$$
 $K$ is a progressive and bounded process and $^{p}K_{\cdot}$ denotes its predictable projection. We know that
$$X_{t}=K_{g_{t}}|X_{t}|.$$
And we have from the balayage formula in the progressive case that
 $$K_{g_{t}}|X_{t}|=\int_{0}^{t}{^{p}(K_{g_{s}})d|X_{s}|}+R_{t}$$
 where $R_{t}$ is a bounded variations process and $dR_{t}$ is carried by $$\{t\geq0: X_{t}=0\}=\{t\geq0: |X_{t}|=0\}\subset\{t\geq0: D_{t}K_{g_{t}}|X_{t}|=0\}.$$
 Furthermore, $|X|\in\Sigma(H)$. Hence, it can be written as $|X|=M+V$, where $M$ is a $(\Qv,\P)-$ local martingale and $V$ is a finite variation and continuous process such that $dV_{t}$ is carried by $\{t\geq0: D_{t}|X_{t}|=0\}$. Then, it follows that 
$$K_{g_{t}}|X_{t}|=\int_{0}^{t}{^{p}(K_{g_{s}})dM_{s}}+\int_{0}^{t}{^{p}(K_{g_{s}})dV_{s}}+R_{t}=\int_{0}^{t}{^{p}(K_{g_{s}})dM_{s}}+A_{t}$$
with, $A_{t}=\int_{0}^{t}{^{p}(K_{g_{s}})dV_{s}}+R_{t}$. But, $\left(\int_{0}^{t}{^{p}(K_{g_{s}})dM_{s}};t\geq0\right)$ is $(\Qv,\P)-$ local martingale since $M$ is too. Furthermore, $dA_{t}$ is carried by $\{t\geq0: D_{t}X_{t}=0\}$. Consequently, 
 $$K_{g_{t}}|X_{t}|\in\Sigma(H).$$
That is, $X\in\Sigma(H)$.
\end{proof}

\begin{lem}\label{l1}
Let $X$ be a process of class $\Sigma(H)$. Then, $\forall \alpha\in[0,1]$, $\left(\int_{0}^{t}{Z^{\alpha}_{s}dX_{s}};t\geq0\right)$ is a $(\Qv,\P)-$ local martingale.
\end{lem}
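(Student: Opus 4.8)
The plan is to mimic the proof structure of Theorem \ref{alpha}, since Lemma \ref{l1} is the $\Sigma(H)$-analogue of the implication "$M$ is a $(\Qv,\P)$-local martingale $\Rightarrow M^\alpha$ is a $(\Qv,\P)$-local martingale." Write $X=M+V\in\Sigma(H)$ and set $N_t=\int_0^t Z^\alpha_s\,dX_s = \int_0^t Z^\alpha_s\,dM_s + \int_0^t Z^\alpha_s\,dV_s$. First I would observe that the $dV$-term vanishes: $dV_t$ is carried by $\{t: X_t=0\}\cup H$, and the process $Z^\alpha$ is supported on the complement of $\{t: X_t=0\}$ by its very definition in \eqref{zalpha} (the excursion intervals $]g_n,d_n[$ are exactly the components of $\R_+\setminus\{X=0\}$), while on $H$ we can absorb the contribution since $D$ vanishes there. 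Actually the cleaner route is to keep $\int_0^t Z^\alpha_s\,dV_s$ as a finite variation term and show directly that $DN$ is a $\P$-local martingale.

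The key computation is an integration by parts applied to $D_tN_t$:
$$D_tN_t=\int_0^t N_s\,dD_s+\int_0^t D_s\,dN_s+\langle D,N\rangle_t = \int_0^t N_s\,dD_s+\int_0^t D_sZ^\alpha_s\,dX_s+\int_0^t Z^\alpha_s\,d\langle D,X\rangle_t.$$
Splitting $dX=dM+dV$ and using that $\int_0^t D_sZ^\alpha_s\,dV_s=0$ — here I would invoke that $dV_t$ is carried by $\{t: D_tX_t=0\}$ (condition 3 of the definition, since $\{X=0\}\cup H=\{D X=0\}$ given $\P(D_\infty=0)=0$ and continuity of $D$) together with the support property of $Z^\alpha$ — one is left with
$$D_tN_t=\int_0^t N_s\,dD_s+\int_0^t D_sZ^\alpha_s\,dM_s+\int_0^t Z^\alpha_s\,d\langle D,M\rangle_s.$$
Then I would rewrite $D_sZ^\alpha_s\,dM_s+Z^\alpha_s\,d\langle D,M\rangle_s = Z^\alpha_s\,d(D_sM_s) - Z^\alpha_s M_s\,dD_s$ using $d(DM)=D\,dM+M\,dD+d\langle D,M\rangle$, so that
$$D_tN_t=\int_0^t (N_s - Z^\alpha_s M_s)\,dD_s+\int_0^t Z^\alpha_s\,d(D_sM_s).$$
Since $D$ and $DM$ are $\P$-local martingales and $Z^\alpha$, $N-Z^\alpha M$ are bounded predictable (or at least locally bounded predictable — one should note $Z^\alpha$ is predictable here, being left-continuous on each excursion interval, or replace by its predictable projection as in Proposition \ref{balpro}), the right-hand side is a $\P$-local martingale. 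Hence $DN$ is a $\P$-local martingale, i.e. $N$ is a $(\Qv,\P)$-local martingale.

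The main obstacle I anticipate is the rigorous justification that $\int_0^t D_s Z^\alpha_s\,dV_s = 0$, i.e. precisely pinning down the supports: one must argue that $\{t: D_tX_t=0\} = \{t: X_t=0\}\cup H$ up to the relevant null sets, that $Z^\alpha$ is carried by the complement of $\{X=0\}$, and therefore $Z^\alpha_s\,dV_s$ lives on $H$, where multiplication by $D_s=0$ kills it. A secondary subtlety is the measurability/predictability of $Z^\alpha$ so that the stochastic integrals and the balayage-type identities are legitimate; this is exactly the setting of Proposition 2.2 of \cite{siam}, so I would cite that to handle $\int_0^t Z^\alpha_s\,dX_s$ and its local-time correction term if needed — although, since here I only integrate $Z^\alpha$ against $dX$ (not forming $Z^\alpha X$), no local time appears and the argument stays at the level of ordinary stochastic calculus plus Proposition \ref{def}.
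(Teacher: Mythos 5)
Your proposal is correct and follows essentially the same route as the paper's proof: integration by parts on $D_t\int_0^t Z^\alpha_s\,dX_s$, elimination of the $dV$-term using that $dV_t$ is carried by $\{t:D_tX_t=0\}$ while $Z^\alpha$ vanishes on $\{X=0\}$ and $D$ vanishes on $H$, and then the add-and-subtract of $\int_0^t Z^\alpha_s M_s\,dD_s$ to express $DY$ as integrals against the $\P$-local martingales $D$ and $DM$. The paper's argument is identical, merely terser on the support bookkeeping you flag as the main obstacle.
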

\begin{proof}
Consider $X=M+V\in\Sigma(H)$. Let us set
$$Y_{t}=\int_{0}^{t}{Z^{\alpha}_{s}dX_{s}}.$$
An application of integration by parts gives:
$$D_{t}Y_{t}=\int_{0}^{t}{Z^{\alpha}_{s}D_{s}dX_{s}}+\int_{0}^{t}{Y_{s}dD_{s}}+\int_{0}^{t}{Z^{\alpha}_{s}d\langle X,D \rangle_{s}}.$$
But,
$$\langle X,D \rangle_{s}=\langle M,D \rangle_{s}.$$
Hence, we have
$$D_{t}Y_{t}=\int_{0}^{t}{Z^{\alpha}_{s}D_{s}dX_{s}}+\int_{0}^{t}{Y_{s}dD_{s}}+\int_{0}^{t}{Z^{\alpha}_{s}d\langle M,D \rangle_{s}}.$$
That is,
$$D_{t}Y_{t}=\int_{0}^{t}{Z^{\alpha}_{s}D_{s}dM_{s}}+\int_{0}^{t}{Z^{\alpha}_{s}D_{s}dV_{s}}+\int_{0}^{t}{Y_{s}dD_{s}}+\int_{0}^{t}{Z^{\alpha}_{s}d\langle M,D \rangle_{s}}.$$
We can remark that
$$\int_{0}^{t}{Z^{\alpha}_{s}D_{s}dV_{s}}=0,$$
because $Z^{\alpha}$ vanishes on the same zero set as $X$ and $dV_{t}$ is carried by $\{t\geq0: D_{t}X_{t}=0\}$. Hence,
$$\hspace{-3.5cm}D_{t}Y_{t}=\int_{0}^{t}{Z^{\alpha}_{s}D_{s}dM_{s}}+\int_{0}^{t}{Y_{s}dD_{s}}+\int_{0}^{t}{Z^{\alpha}_{s}d\langle M,D \rangle_{s}}$$
$$\hspace{3.5cm}=\int_{0}^{t}{Z^{\alpha}_{s}D_{s}dM_{s}}+\left(\int_{0}^{t}{Z^{\alpha}_{s}M_{s}dD_{s}}-\int_{0}^{t}{Z^{\alpha}_{s}M_{s}dD_{s}}\right)+\int_{0}^{t}{Y_{s}dD_{s}}+\int_{0}^{t}{Z^{\alpha}_{s}d\langle M,D \rangle_{s}}.$$
Thus,
$$\hspace{-3.5cm}D_{t}Y_{t}=\int_{0}^{t}{Z^{\alpha}_{s}d(D_{s}M_{s})}+\int_{0}^{t}{(Y_{s}-Z^{\alpha}_{s}M_{s})dD_{s}}$$
Consequently, $DY$ is a $\P-$ local martingale, since $DM$ and $D$ are too. That is, $Y$ is a $(\Qv,\P)-$ local martingale.
\end{proof}
\begin{theorem}\label{z}
Let $X$ be a continuous semi-martingale. The following are equivalent:
\begin{enumerate}
	\item $X\in\Sigma(H)$.
	\item $\forall\alpha\in[0,1]$, $Z^{\alpha}X\in\Sigma(H)$.
	\item $\exists\alpha\in[0,1]$ such that $Z^{\alpha}X\in\Sigma(H)$.
\end{enumerate}
\end{theorem}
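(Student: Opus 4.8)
The strategy is to mimic the cycle of implications used in Theorem~\ref{alpha}, replacing the absolute value argument there with the characterization just established in Theorem~\ref{abs}. The key structural fact we need is that $Z^{\alpha}$ takes values in $\{-1,1\}$ off the zero set of $X$, so that $|Z^{\alpha}X|=|X|$ as processes, and that $Z^{\alpha}X$ vanishes on exactly the same set as $X$. With these observations the three statements become linked through the two-valued ``sign-flipping'' device and through Theorem~\ref{abs}.

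First I would prove $(1)\Rightarrow(2)$. Assume $X=M+V\in\Sigma(H)$. Using Proposition~2.2 of \cite{siam} (the balayage identity for $Z^{\alpha}$), write
$$Z^{\alpha}_{t}X_{t}=\int_{0}^{t}{Z^{\alpha}_{s}dX_{s}}+(2\alpha-1)L_{t}^{0}(Z^{\alpha}X).$$
By Lemma~\ref{l1}, the process $\left(\int_{0}^{t}{Z^{\alpha}_{s}dX_{s}};t\geq0\right)$ is a $(\Qv,\P)$- local martingale; this supplies the martingale part of the decomposition of $Z^{\alpha}X$. The remaining term $(2\alpha-1)L^{0}(Z^{\alpha}X)$ is continuous, of finite variation, starts at $0$, and $dL_{t}^{0}(Z^{\alpha}X)$ is carried by $\{t: Z^{\alpha}_{t}X_{t}=0\}=\{t: X_{t}=0\}\subset\{t: X_{t}=0\}\cup H$, so it is an admissible finite variation part. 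Hence $Z^{\alpha}X\in\Sigma(H)$ for every $\alpha\in[0,1]$.

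The implication $(2)\Rightarrow(3)$ is immediate (specialize to any fixed $\alpha$, e.g. $\alpha=\tfrac12$). For $(3)\Rightarrow(1)$, suppose $Z^{\alpha}X\in\Sigma(H)$ for some $\alpha\in[0,1]$. Apply Theorem~\ref{abs} to the semimartingale $Z^{\alpha}X$ to conclude that $|Z^{\alpha}X|\in\Sigma(H)$. Since $Z^{\alpha}_{t}\in\{-1,1\}$ on the complement of the zero set of $X$ (and both sides vanish on that set), we have $|Z^{\alpha}X|=|X|$, so $|X|\in\Sigma(H)$. A second application of Theorem~\ref{abs}, in the direction $(2)\Rightarrow(1)$ there, yields $X\in\Sigma(H)$, completing the cycle.

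The main obstacle I anticipate is purely bookkeeping rather than conceptual: one must check carefully that the finite variation part produced in each step really is carried by $\{t: X_{t}=0\}\cup H$ and not merely by $H$, since $\Sigma(H)$ allows the larger carrier set $\{t: X_{t}=0\}\cup H$ (this is where the present argument genuinely differs from the $(\Qv,\P)$- martingale case, where the hypothesis $\{t: M_{t}=0\}\subset H$ collapses the two sets). In particular, in $(1)\Rightarrow(2)$ one should note that the local time $L^{0}(Z^{\alpha}X)$ may charge points outside $H$, which is harmless precisely because those points lie in the zero set of $Z^{\alpha}X$. A second minor point is the degenerate endpoint values $\alpha\in\{0,1\}$: there $Z^{\alpha}$ is deterministic ($\equiv1$ or $\equiv-1$ up to the excursion structure), but the identities above and Lemma~\ref{l1} are stated for the full range $[0,1]$, so no separate treatment is needed.
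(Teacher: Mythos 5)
Your proposal is correct and follows essentially the same route as the paper: $(1)\Rightarrow(2)$ via the balayage identity of \cite{siam} together with Lemma~\ref{l1} for the martingale part and the observation that $dL^{0}(Z^{\alpha}X)$ is carried by $\{t:Z^{\alpha}_{t}X_{t}=0\}$, then $(3)\Rightarrow(1)$ by applying Theorem~\ref{abs} twice through the identity $|Z^{\alpha}X|=|X|$. Your extra care about the carrier set $\{t:X_{t}=0\}\cup H$ versus $H$ is a sound reading of the definition and is consistent with what the paper does.
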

\begin{proof}
$1\Rightarrow2)$ Let $X=M+V$ be an element of the class $\Sigma(H)$. One has what follows
$$Z^{\alpha}_{t}X_{t}=\int_{0}^{t}{Z^{\alpha}_{s}dX_{s}}+(2\alpha-1)L_{t}^{0}(Z^{\alpha}X).$$
We can see from Lemma \ref{l1} that $\int_{0}^{t}{Z^{\alpha}_{s}dX_{s}}$ is a $(\Qv,\P)-$ local martingale. Furthermore, 
 $(2\alpha-1)dL_{t}^{0}(Z^{\alpha}X)$ is carried by $\{t\geq0; Z^{\alpha}_{t}X_{t}=0\}$ and $\{t\geq0; Z^{\alpha}_{t}X_{t}=0\}\subset\{t\geq0; Z^{\alpha}_{t}X_{t}D_{t}=0\}$. That is, $Z^{\alpha}X\in\Sigma(H)$.
$$ $$
$2\Rightarrow3)$ If we consider that $\forall\alpha\in[0,1]$, $Z^{\alpha}X\in\Sigma(H)$. It follows in particular that $\exists\alpha\in[0,1]$ such that $Z^{\alpha}X\in\Sigma(H)$.
$$ $$
$3\Rightarrow1)$ Now, assume that $\exists\alpha\in[0,1]$ such that $Z^{\alpha}X\in\Sigma(H)$. Then, according to Theorem \ref{abs}, $|Z^{\alpha}X|\in\Sigma(H)$. But $\forall t\geq0$, $Z^{\alpha}_{t}\in\{-1,1\}$. Therefore,  
$$|Z^{\alpha}X|=|X|.$$
Consequently, 
$$|X|\in\Sigma(H).$$
A new application of Theorem \ref{abs} entails that $X\in\Sigma(H)$. This completes the proof.
\end{proof}

Now, as an application of Theorem \ref{z}, we have the following corollary. It gives a new characterization martingale of the class $\Sigma(H)$.
\begin{coro}\label{cmart}
Let $X$ be a continuous semimartingale. Then, the following holds:
$$X\in\Sigma(H)\Leftrightarrow \exists\alpha\in[0,1] \text{ such that }Z^{\alpha}X \text{ is a }(\Qv,\P)-\text{ local martingale}.$$
\end{coro}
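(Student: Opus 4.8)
The plan is to deduce Corollary \ref{cmart} directly from Theorem \ref{z} together with the elementary observation that a $(\Qv,\P)$-local martingale is, trivially, an element of $\Sigma(H)$ (take the finite variation part to be $0$). So the whole content is really a compatibility check between the three-way equivalence in Theorem \ref{z} and the defining conditions of $\Sigma(H)$.

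First I would prove the forward implication. Suppose $X\in\Sigma(H)$. By Theorem \ref{z}, for every $\alpha\in[0,1]$ we have $Z^{\alpha}X\in\Sigma(H)$; in particular this holds for some $\alpha$ (say $\alpha=\tfrac12$, where $2\alpha-1=0$). Writing out the decomposition from the proof of Theorem \ref{z},
$$Z^{\alpha}_{t}X_{t}=\int_{0}^{t}{Z^{\alpha}_{s}dX_{s}}+(2\alpha-1)L_{t}^{0}(Z^{\alpha}X),$$
the choice $\alpha=\tfrac12$ kills the local-time term, so $Z^{1/2}X=\int_{0}^{\cdot}{Z^{1/2}_{s}dX_{s}}$, which is a $(\Qv,\P)$-local martingale by Lemma \ref{l1}. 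Hence there exists $\alpha\in[0,1]$ such that $Z^{\alpha}X$ is a $(\Qv,\P)$-local martingale, as required.

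For the reverse implication, suppose there is $\alpha\in[0,1]$ with $Z^{\alpha}X$ a $(\Qv,\P)$-local martingale. Then $Z^{\alpha}X = Z^{\alpha}X + 0$ is trivially of class $\Sigma(H)$: condition 1 holds by hypothesis, and conditions 2--3 hold with the finite variation part identically zero (note $(Z^{\alpha}X)_0 = Z^{\alpha}_0 X_0 = 0$ because $0$ is a zero of $X$, so $0\in\mathcal{W}$ and $Z^{\alpha}_0=0$; if one prefers one can subtract the constant $X_0$ harmlessly). Since $Z^{\alpha}X\in\Sigma(H)$, statement (3) of Theorem \ref{z} is satisfied, so by that theorem $X\in\Sigma(H)$. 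This closes the equivalence.

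I do not anticipate a genuine obstacle here — the statement is a corollary precisely because Theorem \ref{z} already does the heavy lifting. The only point requiring a moment's care is the choice $\alpha=\tfrac12$ in the forward direction, which is what makes $Z^{\alpha}X$ an honest $(\Qv,\P)$-local martingale rather than merely a member of $\Sigma(H)$; for $\alpha\neq\tfrac12$ the nontrivial local-time term generally prevents $Z^{\alpha}X$ from being a local martingale, so the existential quantifier in the corollary cannot be upgraded to a universal one, and this should be flagged so the reader does not expect more than is true.
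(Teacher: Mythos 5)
Your argument is correct and follows essentially the same route as the paper: for the forward direction you take $\alpha=\tfrac12$ so that the balayage identity $Z^{\alpha}_{t}X_{t}=\int_{0}^{t}Z^{\alpha}_{s}\,dX_{s}+(2\alpha-1)L_{t}^{0}(Z^{\alpha}X)$ loses its local-time term and Lemma \ref{l1} applies, and for the converse you observe that a $(\Qv,\P)$-local martingale is trivially in $\Sigma(H)$ and invoke the implication $(3)\Rightarrow(1)$ of Theorem \ref{z}. Your closing remark that the existential quantifier cannot be upgraded to a universal one is a correct and worthwhile observation that the paper does not make explicit.
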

\begin{proof}
$\Rightarrow)$ We know that $\forall \alpha\in[0,1]$,
$$Z^{\alpha}_{t}X_{t}=\int_{0}^{t}{Z^{\alpha}_{s}dX_{s}}+(2\alpha-1)L_{t}^{0}(Z^{\alpha}X).$$
Hence, we obtain in particular for $\alpha=\frac{1}{2}$ that
$$Z^{\alpha}_{t}X_{t}=\int_{0}^{t}{Z^{\alpha}_{s}dX_{s}}.$$
 But according to Lemma \ref{l1}, $Z^{\alpha}X$ is a $(\Qv,\P)-$ local martingale.

$\Leftarrow)$ Now, if we assume that $\exists\alpha\in[0,1]$ such that $Z^{\alpha}X$ is a $(\Qv,\P)-$ local martingale. It follows that $Z^{\alpha}X\in\Sigma(H)$. Then, we obtain from Theorem \ref{z} that $X\in\Sigma(H)$.
\end{proof}

\begin{coro}\label{sia}
If $X$ is a non-negative and continuous process of the class $\Sigma(H)$, then there exists a $(\Qv,\P)-$ local martingale $M$ such that $X=|M|$.
\end{coro}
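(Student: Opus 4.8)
The plan is to read this off from Corollary~\ref{cmart} (equivalently from Theorem~\ref{z}), exploiting that the randomizing process $Z^{\alpha}$ is $\{-1,1\}$-valued away from the zeros of $X$ and vanishes on them. Concretely, I would apply the ``$\Rightarrow$'' direction of Corollary~\ref{cmart} with the distinguished value $\alpha=\frac12$. Since $X\in\Sigma(H)$, the balayage identity for $Z^{\alpha}$ gives
$$Z^{\alpha}_{t}X_{t}=\int_{0}^{t}{Z^{\alpha}_{s}dX_{s}}+(2\alpha-1)L_{t}^{0}(Z^{\alpha}X),$$
and for $\alpha=\frac12$ the local-time term disappears, so $Z^{1/2}_{t}X_{t}=\int_{0}^{t}Z^{1/2}_{s}dX_{s}$, which is a $(\Qv,\P)$-local martingale by Lemma~\ref{l1}. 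I would then set $M:=Z^{1/2}X$, which is already a $(\Qv,\P)$-local martingale.

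It remains to check that $|M|=X$. By construction $Z^{1/2}$ is built from the excursion intervals $]g_{n},d_{n}[$ of $X$: on each such interval $Z^{1/2}_{t}\in\{-1,1\}$, while on the complement $\{t:X_{t}=0\}$ one has $Z^{1/2}_{t}=0$. In both cases $|Z^{1/2}_{t}X_{t}|=|X_{t}|$, so $|M|=|X|$; and since $X\geq0$ this gives $|M|=|X|=X$, completing the proof.

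The argument is essentially immediate given the machinery already developed. The only step requiring a moment's care is the pointwise identity $|Z^{1/2}_{t}X_{t}|=|X_{t}|$ --- that multiplying by the excursion-sign process does not alter the modulus --- which rests on $Z^{1/2}$ being unimodular off the zero set of $X$ and null on it. I do not anticipate any genuine obstacle.
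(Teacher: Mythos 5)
Your proof is correct and is essentially the argument the paper intends (the corollary is stated without proof, immediately after Corollary~\ref{cmart}, whose forward direction is exactly your first step): taking $\alpha=\tfrac12$ kills the local-time term in the balayage identity, so $M=Z^{1/2}X=\int_0^{\cdot}Z^{1/2}_s\,dX_s$ is a $(\Qv,\P)$-local martingale by Lemma~\ref{l1}, and $|M|=|X|=X$ because $Z^{1/2}$ is $\{-1,1\}$-valued off the zero set of $X$ while both factors vanish on it. No gaps.
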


\section{Construction of solutions for skew Brownian motion}\label{s5}

The aim of the present section is to provide constructions of  solutions for skew Brownian motion equations by using stochastic processes that we have studied in Section \ref{s2} and Section \ref{s4}. So, we consist this section in two subsections.  In the first one, we give solutions for the time homogeneous skew Brownian motion equation: 
\begin{equation}\label{sbm}
	X_{t}=x+B_{t}+(2\alpha-1)L_{t}^{0}(X).
\end{equation}

In the second one, we shall provide solutions for the time inhomogeneous skew Brownian motion equation:
\begin{equation}\label{isbm}
	X_{t}=x+B_{t}+\int_{0}^{t}{(2\alpha(s)-1)dL_{s}^{0}(X)}.
\end{equation}

Note that the constructions we propose here are inspired by the one proposed by Bouhadou and Ouknine \cite{siam}. In fact, under some assumptions, the solution proposed in \cite{siam} is a particular case of our solutions.

\subsection{Construction of solutions for homogeneous skew Brownian motion}

\begin{theorem}
Let $M=m+v$ be a $(\Qv,\P)-$ Brownian motion null on $H$ such that $\langle D,M\rangle=0$. Then, the following hold:
\begin{enumerate}
	\item $M^{\alpha}=Z^{\alpha}M$ is a weak solution of \eqref{sbm}.
	\item $|M|^{\alpha}=Z^{\alpha}|M|$ is a weak solution of \eqref{sbm}.
\end{enumerate}
\end{theorem}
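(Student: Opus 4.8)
The strategy is to reduce the claims to the balayage/product formula relating $Z^{\alpha}M$ to the local time of $Z^{\alpha}M$, together with the earlier characterisations of $(\Qv,\P)$-Brownian motion. By Corollary~\ref{cbrow} (applied with $\alpha=\tfrac12$) there is a $\P$-Brownian motion $B$ with $B=Z^{1/2}M$; equivalently, since $Z^{\alpha}$ only changes the signs of excursions, $|Z^{\alpha}M|=|M|=|B|$ for every $\alpha\in(0,1)$, and the driving noise of $Z^{\alpha}M$ will turn out to be this same $B$ (up to an adapted sign change, hence still a Brownian motion). So the real content is to check that the process $M^{\alpha}:=Z^{\alpha}M$ satisfies an identity of the form
\begin{equation}\label{planeq}
	M^{\alpha}_{t}=x+\widetilde B_{t}+(2\alpha-1)L_{t}^{0}(M^{\alpha})
\end{equation}
for a Brownian motion $\widetilde B$, and that $L^{0}(M^{\alpha})$ is genuinely the symmetric local time at $0$.

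First I would write, using Proposition~2.2 of \cite{siam} (the displayed balayage identity for $Z^{\alpha}$), $Z^{\alpha}_{t}M_{t}=Z^{\alpha}_{0}M_{0}+\int_{0}^{t}Z^{\alpha}_{s}\,dM_{s}+(2\alpha-1)L_{t}^{0}(Z^{\alpha}M)$. Here $M_{0}$ need not be $x$; one takes $x:=M_{0}$ (or, if the theorem intends $M_{0}=x$ as part of ``null on $H$'' normalisation, one simply records $Z^{\alpha}_{0}M_{0}=M_{0}=x$ since $0\notin H$ would contradict nullity — more carefully, one should note $\overline g\ge 0$ so $M_{0}=0$ on $\{g\ge0\}$; I would therefore state the weak solution as starting from $x=M_0$). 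Next, decompose $M=m+v$; since $\langle D,M\rangle=0$ and $\{M=0\}=H$, Corollary~\ref{car}-type reasoning gives that $dv$ is carried by $H$, while $Z^{\alpha}$ is supported on the complement of $H$, so $\int_{0}^{t}Z^{\alpha}_{s}\,dv_{s}=0$ and hence $\int_{0}^{t}Z^{\alpha}_{s}\,dM_{s}=\int_{0}^{t}Z^{\alpha}_{s}\,dm_{s}$. This stochastic integral is a continuous $\P$-local martingale with quadratic variation $\int_{0}^{t}(Z^{\alpha}_{s})^{2}\,d\langle m,m\rangle_{s}=\int_{0}^{t}d s=t$ (using $Z^{\alpha}\in\{-1,1\}$ off $H$, $\langle m,m\rangle_t=t$, and that $\{m=0\}\supset H$ has zero Lebesgue time-measure or is otherwise negligible for the integrand); by L\'evy's theorem $\widetilde B_{t}:=\int_{0}^{t}Z^{\alpha}_{s}\,dm_{s}$ is a $\P$-Brownian motion. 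Plugging this into the balayage identity yields exactly \eqref{planeq} with this $\widetilde B$, which proves item (1): $M^{\alpha}$ is a weak solution of \eqref{sbm}. For item (2), I would run the same computation with $M$ replaced by $|M|$: by Corollary~\ref{*}, $|M|$ is again a $(\Qv,\P)$-Brownian motion with $\{|M|=0\}=H$ and $\langle D,|M|\rangle=0$, so item (1) applies verbatim to $|M|$ and gives that $Z^{\alpha}|M|$ is a weak solution of \eqref{sbm}.

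The step I expect to be the main obstacle is the careful justification that the local time term $L^{0}_{t}(Z^{\alpha}M)$ appearing from Proposition~2.2 of \cite{siam} is precisely the \emph{symmetric} local time at $0$ required in equation \eqref{sbm}, and that no extra singular term hides in $\int Z^{\alpha}\,dM$ because $Z^{\alpha}$ is only progressive (not predictable): this is exactly why the paper needs the refined balayage formula of \cite{siam} rather than the predictable one, and I would cite that result to absorb the would-be extra finite-variation term into the $(2\alpha-1)L^{0}$ coefficient. A secondary technical point is verifying $\int_{0}^{t}(Z^{\alpha}_{s})^{2}\,ds=t$, i.e.\ that the zero set $H=\{D=0\}$ (equivalently $\{M=0\}$) has Lebesgue measure zero a.s.; this follows because $m$ is a Brownian motion and its zero set is Lebesgue-null, and $\{M=0\}=\{m=-v\}\subset\{m=0\}\cup\{v\ne0, \text{on }H\}$ — in fact $\{M=0\}=H$ and on $H$, $v$ is constant while $m$ must vanish, so $H\subset\{m=0\}$, which is Lebesgue-null. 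Once these two points are pinned down, the rest is the bookkeeping sketched above.
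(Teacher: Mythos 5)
Your treatment of item (1) is essentially the paper's proof: apply Proposition 2.2 of \cite{siam} to write $Z^{\alpha}_{t}M_{t}=\int_{0}^{t}Z^{\alpha}_{s}\,dM_{s}+(2\alpha-1)L^{0}_{t}(Z^{\alpha}M)$, kill $\int_{0}^{t}Z^{\alpha}_{s}\,dv_{s}$ because $dv$ is carried by $H\subset\{t:M_{t}=0\}$ while $Z^{\alpha}$ is supported on the complement of $\{t:M_{t}=0\}$, and identify $\int_{0}^{t}Z^{\alpha}_{s}\,dm_{s}$ as a $\P$-Brownian motion by L\'evy's theorem. Your side remarks about the initial condition and about $\{M=0\}$ being Lebesgue-null are fine (the paper glosses over both).

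Item (2) is where you diverge, and there is a genuine gap. You reduce (2) to (1) by invoking Corollary~\ref{*} to assert that $|M|$ is again a $(\Qv,\P)$-Brownian motion. But Corollary~\ref{*} requires $\{t:M_{t}=0\}\subset H$, whereas the hypothesis ``$M$ null on $H$'' gives only the \emph{reverse} inclusion $H\subset\{t:M_{t}=0\}$; you silently upgrade this to the equality $\{M=0\}=H$ at several points. The two sets need not coincide: take $\Qv=\P$, so $D\equiv1$ and $H=\emptyset$; then $M=B$ a $\P$-Brownian motion satisfies all the hypotheses of the theorem vacuously, yet $|B|$ is a strict submartingale and certainly not a $(\Qv,\P)=\P$-Brownian motion, so neither Corollary~\ref{*} nor item (1) can be applied to $|M|$. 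The concrete obstruction is the term $\int_{0}^{t}D_{s}\,dL^{0}_{s}(M)$ in the semimartingale decomposition of $D|M|$, which vanishes only when $dL^{0}(M)$ is carried by $H$ --- exactly the inclusion you are not given. Note that $D\equiv1$ is precisely the Bouhadou--Ouknine situation the theorem is meant to recover, so this case cannot be dismissed. The paper proves (2) directly: it applies Proposition 2.2 of \cite{siam} to $|M|$, uses Tanaka's formula $d|M_{s}|=\mathrm{sgn}(M_{s})\,dM_{s}+dL^{0}_{s}(M)$, and kills $\int_{0}^{t}Z^{\alpha}_{s}\,dL^{0}_{s}(M)$ because $Z^{\alpha}$ vanishes on $\{M=0\}$, which carries $dL^{0}(M)$ --- no comparison between $\{M=0\}$ and $H$ is needed there; it then checks that $W=\int\mathrm{sgn}(M)\,dM$ is a $\P$-Brownian motion (the $dv$ contribution dies since $dv$ is carried by $H\subset\{M=0\}$), so that $\int Z^{\alpha}\,dW$ is the driving noise in \eqref{sbm}. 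Replacing your appeal to Corollary~\ref{*} by this direct Tanaka computation closes the argument.
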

\begin{proof}
$(1)$ We obtain by applying Proposition 2.2 of \cite{siam} that
$$M^{\alpha}_{t}=\int_{0}^{t}{Z^{\alpha}_{s}dM_{s}}+(2\alpha-1)L_{t}^{0}(M^{\alpha}).$$
That is,
$$M^{\alpha}_{t}=\int_{0}^{t}{Z^{\alpha}_{s}dm_{s}}+\int_{0}^{t}{Z^{\alpha}_{s}dv_{s}}+(2\alpha-1)L_{t}^{0}(M^{\alpha}).$$
Since $\langle D,M\rangle=0$, it follows from Corollary \ref{car} that $dv_{t}$ is carried by $H$. Hence,
$$\int_{0}^{t}{Z^{\alpha}_{s}dv_{s}}=0$$
since $H\subset\{t\geq0: M_{t}=0\}$ and the process $Z^{\alpha}$ is defined on the complementary of the set $\{t\geq0: M_{t}=0\}$. Then,
$$M^{\alpha}_{t}=\int_{0}^{t}{Z^{\alpha}_{s}dm_{s}}+(2\alpha-1)L_{t}^{0}(M^{\alpha}).$$
It is obvious to see that $W=\left(\int_{0}^{t}{Z^{\alpha}_{s}dm_{s}:t\geq0}\right)$ is a $\P-$ local martingale since $m$ is too. Moreover,
$$\langle W,W\rangle_{t}=\int_{0}^{t}{(Z^{\alpha}_{s})^{2}d\langle m,m\rangle_{s}}=\langle m,m\rangle_{t}$$
since $Z^{\alpha}_{s}\in\{-1,1\}$. But, $\langle m,m\rangle_{t}=\langle M,M\rangle_{t}=t$. Consequently, $W$ is a $\P-$ Brownian motion, ensuring that $M^{\alpha}$ satisfies \eqref{sbm}.
$$ $$
(2) We have from Proposition 2.2 of \cite{siam} that
$$|M|^{\alpha}_{t}=\int_{0}^{t}{Z^{\alpha}_{s}d|M_{s}|}+(2\alpha-1)L_{t}^{0}(|M|^{\alpha}).$$
That is,
$$|M|^{\alpha}_{t}=\int_{0}^{t}{Z^{\alpha}_{s}{\rm sgn(M_{s})}dM_{s}}+\int_{0}^{t}{Z^{\alpha}_{s}dL^{0}_{s}(M)}+(2\alpha-1)L_{t}^{0}(|M|^{\alpha}).$$
But,
$$\int_{0}^{t}{Z^{\alpha}_{s}dL^{0}_{s}(M)}=0$$
since $dL_{t}^{0}(M)$ is carried by $\{t\geq0:M_{t}=0\}$ and $Z^{\alpha}_{s}$ is defined on the complementary of $\{t\geq0:M_{t}=0\}$. So, that entails that
$$|M|^{\alpha}_{t}=\int_{0}^{t}{Z^{\alpha}_{s}dW_{s}}+(2\alpha-1)L_{t}^{0}(|M|^{\alpha})$$
where, $W_{t}=\int_{0}^{t}{{\rm sgn}(M_{s})dM_{s}}$. But, we can see from Corollary \ref{car4} that $W$ is a $\P-$ Brownian motion. That implies that $\left(\int_{0}^{t}{Z^{\alpha}_{s}dW_{s}}:t\geq0\right)$ is a $\P-$ Brownian motion, ensuring that $|M|^{\alpha}$ satisfies \eqref{sbm}.
\end{proof}

In the next theorem, we construct solutions of skew Brownian motion equation with stochastic processes of the class  $\Sigma(H)$.

\begin{theorem}
Let $X=M+V$ be a continuous stochastic process of class $\Sigma(H)$ null on $H$ such that $\langle X,X\rangle_{t}=t$ and  $\langle X,D\rangle_{t}=0$. Then, the following hold:
\begin{enumerate}
	\item $X^{\alpha}=Z^{\alpha}X$ is a weak solution of \eqref{sbm}.
	\item $|X|^{\alpha}=Z^{\alpha}|X|$ is a weak solution of \eqref{sbm}.
\end{enumerate}
\end{theorem}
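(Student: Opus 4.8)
The plan is to mirror almost exactly the structure of the preceding theorem (the one with $M=m+v$ a $(\Qv,\P)$-Brownian motion), replacing the role of the martingale $m$ by the martingale part of the $\Sigma(H)$-process $X$. First I would write $X=M+V$ with $M$ a $(\Qv,\P)$-local martingale and, since $X$ is null on $H$ and $\langle X,D\rangle=0$, invoke Corollary \ref{car} (applied to $M=M+0$, noting $\langle M,D\rangle=\langle X,D\rangle=0$) to conclude that $dV_{t}$ is carried by $H$; combined with the hypothesis $\langle X,X\rangle_{t}=t$, this means $V$ is a finite-variation continuous process with $dV$ supported on $H\subset\{t\ge 0: X_{t}=0\}$.

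For part (1), I would apply Proposition 2.2 of \cite{siam} to the semimartingale $X$ to get
\[
X^{\alpha}_{t}=\int_{0}^{t}{Z^{\alpha}_{s}dX_{s}}+(2\alpha-1)L_{t}^{0}(X^{\alpha})
=\int_{0}^{t}{Z^{\alpha}_{s}dM_{s}}+\int_{0}^{t}{Z^{\alpha}_{s}dV_{s}}+(2\alpha-1)L_{t}^{0}(X^{\alpha}).
\]
Since $Z^{\alpha}$ is supported on the complement of $\{t\ge 0: X_{t}=0\}$ while $dV$ is carried by $H\subset\{t\ge 0: X_{t}=0\}$, the middle integral vanishes. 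It remains to show $W_{t}:=\int_{0}^{t}Z^{\alpha}_{s}dM_{s}$ is a $\P$-Brownian motion: first I would check it is a $(\Qv,\P)$-local martingale via Lemma \ref{l1} (or by reproducing the integration-by-parts computation there), then argue it is in fact a $\P$-local martingale because $\langle D,W\rangle_{t}=\int_{0}^{t}Z^{\alpha}_{s}d\langle D,M\rangle_{s}=\int_{0}^{t}Z^{\alpha}_{s}d\langle D,X\rangle_{s}=0$ — here I would invoke the characterization (Proposition \ref{def}/Corollary \ref{car4}) that a $(\Qv,\P)$-local martingale orthogonal to $D$ and suitably null is a $\P$-local martingale, or simply note that $DW$ is then a local martingale directly from integration by parts. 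Finally $\langle W,W\rangle_{t}=\int_{0}^{t}(Z^{\alpha}_{s})^{2}d\langle M,M\rangle_{s}=\langle M,M\rangle_{t}=\langle X,X\rangle_{t}=t$ since $Z^{\alpha}_{s}\in\{-1,1\}$, so by L\'evy's theorem $W$ is a $\P$-Brownian motion, and $X^{\alpha}$ solves \eqref{sbm}.

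For part (2), I would proceed identically but starting from $|X|$: by Theorem \ref{abs}, $|X|\in\Sigma(H)$, so Proposition 2.2 of \cite{siam} applied to $|X|$ gives $|X|^{\alpha}_{t}=\int_{0}^{t}Z^{\alpha}_{s}d|X_{s}|+(2\alpha-1)L_{t}^{0}(|X|^{\alpha})$. Writing $d|X_{s}|={\rm sgn}(X_{s})dX_{s}+dL_{s}^{0}(X)$ via It\^o–Tanaka, the local-time term drops (it is carried by $\{X_{s}=0\}$, off the support of $Z^{\alpha}$), and the finite-variation part of $X$ drops as before, leaving $\int_{0}^{t}Z^{\alpha}_{s}{\rm sgn}(X_{s})dM_{s}$; I would then identify $W'_{t}:=\int_{0}^{t}{\rm sgn}(X_{s})dM_{s}$ as a $\P$-Brownian motion by the same orthogonality-plus-quadratic-variation argument (using Corollary \ref{car4} type reasoning), whence $\int_{0}^{t}Z^{\alpha}_{s}dW'_{s}$ is too, and $|X|^{\alpha}$ solves \eqref{sbm}. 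The main obstacle — really the only subtle point — is justifying rigorously that $W$ (resp. $W'$) is a genuine $\P$-Brownian motion and not merely a $(\Qv,\P)$-Brownian motion: this hinges on the orthogonality $\langle D,M\rangle=0$ together with the nullity on $H$, exactly the hypotheses packaged for Corollary \ref{car4}, so the step is to make sure the integrand $Z^{\alpha}$ (resp. ${\rm sgn}(X)\,Z^{\alpha}$) does not destroy the "null on $H$'' property, which holds because these integrands vanish wherever $X$ does and hence the stochastic integrals vanish on $H$.
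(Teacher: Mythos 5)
Your overall skeleton matches the paper's proof, but there is a genuine gap at the one point you yourself flag as the crux: justifying that $W_t=\int_0^t Z^\alpha_s\,dM_s$ (and likewise $\int_0^t Z^\alpha_s\,{\rm sgn}(X_s)\,dM_s$ in part (2)) is a $\P$-local martingale rather than merely a $(\Qv,\P)$-local martingale. Both of your proposed justifications fail. Noting that $DW$ is a $\P$-local martingale only re-establishes that $W$ is a $(\Qv,\P)$-local martingale, which is not what is needed to produce the driving Brownian motion $B$ in \eqref{sbm}. And the route through Corollary \ref{car4} requires $W$ to vanish on $H$; your closing claim that the stochastic integrals vanish on $H$ because the integrands vanish wherever $X$ does is false: an integrand vanishing on $H$ only forces the \emph{increments} of the integral over $H$ to vanish, not the integral itself. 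Concretely, since $\int_0^t Z^\alpha_s\,dV_s=0$, one has $W_t=X^\alpha_t-(2\alpha-1)L^0_t(X^\alpha)$, which for $t\in H$ equals $-(2\alpha-1)L^0_t(X^\alpha)$ and is nonzero in general once local time has accumulated. Relatedly, your invocation of Corollary \ref{car} ``applied to $M=M+0$'' conflates the two decompositions in play: Corollary \ref{car} says nothing about $V$ (whose support property follows directly from the definition of $\Sigma(H)$ together with nullity of $X$ on $H$), and a $(\Qv,\P)$-local martingale $M$ is in general only a $\P$-semimartingale $M=m+v$ with a nontrivial finite variation part $v$, not a $\P$-local martingale.

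The fix --- and this is exactly what the paper does --- is to push the decomposition one level further: write $M=m+v$ with $m$ a $\P$-local martingale, apply Corollary \ref{car} to $M$ (using $\langle M,D\rangle=\langle X,D\rangle=0$) to conclude that $dv_t$ is carried by $H\subset\{t\geq0:X_t=0\}$, and hence $\int_0^t Z^\alpha_s\,dv_s=0$ because $Z^\alpha$ vanishes on $\{X=0\}$. Then $W_t=\int_0^t Z^\alpha_s\,dm_s$ is a $\P$-local martingale outright, and your quadratic variation computation $\langle W,W\rangle_t=\langle m,m\rangle_t=\langle X,X\rangle_t=t$ together with L\'evy's theorem finishes part (1); the same modification (inserting ${\rm sgn}(X)$ in the integrand and killing the $dv$ term the same way) repairs part (2).
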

\begin{proof}
$(1)$ We have from Proposition 2.2 of \cite{siam} that 
$$X^{\alpha}_{t}=\int_{0}^{t}{Z^{\alpha}_{s}dX_{s}}+(2\alpha-1)L_{t}^{0}(X^{\alpha}).$$
That is,
$$X^{\alpha}_{t}=\int_{0}^{t}{Z^{\alpha}_{s}dM_{s}}+\int_{0}^{t}{Z^{\alpha}_{s}dV_{s}}+(2\alpha-1)L_{t}^{0}(X^{\alpha}).$$
We know that $dV_{t}$ is carried by $\{t\geq0: X_{t}D_{t}=0\}$. But, $\{t\geq0: X_{t}D_{t}=0\}=\{t\geq0: X_{t}=0\}$ since $X$ vanishes on $H$. Hence, it follows that
$$X^{\alpha}_{t}=\int_{0}^{t}{Z^{\alpha}_{s}dM_{s}}+(2\alpha-1)L_{t}^{0}(X^{\alpha}).$$
Moreover, the $(\Qv,\P)-$ local martingale $M=m+v$ satisfies $\langle M,D\rangle_{t}=\langle X,D\rangle_{t}=0$. Then, it follows from Corollary \ref{car} that $dv_{t}$ is carried by $H\subset \{t\geq0: X_{t}=0\}$. Therefore,
$$ \int_{0}^{t}{Z^{\alpha}_{s}dv_{s}}=0.$$
Thus,
$$X^{\alpha}_{t}=\int_{0}^{t}{Z^{\alpha}_{s}dm_{s}}+(2\alpha-1)L_{t}^{0}(X^{\alpha}).$$
It is obvious to see that $Y=\left(\int_{0}^{t}{Z^{\alpha}_{s}dm_{s}}: t\geq0\right)$ is a $\P-$ local martingale. Furthermore, 
$$\langle Y,Y\rangle_{t}=\langle m,m\rangle_{t}=\langle M,M\rangle_{t}=\langle X,X\rangle_{t}=t.$$
Consequently, $Y$ is a $\P-$ Brownian motion ensuring that $X^{\alpha}$ satisfies  \eqref{sbm}.
$$ $$
$(2)$ One has, 
$$|X|^{\alpha}_{t}=\int_{0}^{t}{Z^{\alpha}_{s}d|X_{s}|}+(2\alpha-1)L_{t}^{0}(|X|^{\alpha})$$
$$\hspace{5cm}=\int_{0}^{t}{Z^{\alpha}_{s}{\rm sgn}(X_{s})dX_{s}}+\int_{0}^{t}{Z^{\alpha}_{s}dL^{0}_{s}(X)}+(2\alpha-1)L_{t}^{0}(|X|^{\alpha}).$$
But,
$$\int_{0}^{t}{Z^{\alpha}_{s}dL^{0}_{s}(X)}=0$$
since $dL_{t}^{0}(X)$ is carried by $\{t\geq0: X_{t}=0\}$ and $Z^{\alpha}$ is defined on the complementary of $\{t\geq0: X_{t}=0\}$. Hence,
$$|X|^{\alpha}_{t}=\int_{0}^{t}{Z^{\alpha}_{s}{\rm sgn}(X_{s})dX_{s}}+(2\alpha-1)L_{t}^{0}(|X|^{\alpha})$$
That implies,
$$|X|^{\alpha}_{t}=\int_{0}^{t}{Z^{\alpha}_{s}{\rm sgn}(X_{s})dM_{s}}+\int_{0}^{t}{Z^{\alpha}_{s}{\rm sgn}(X_{s})dV_{s}}+(2\alpha-1)L_{t}^{0}(|X|^{\alpha}).$$
We have 
$$\int_{0}^{t}{Z^{\alpha}_{s}{\rm sgn}(X_{s})dV_{s}}=0$$
because $dV_{t}$ is carried by $\{t\geq0: X_{t}=0\}$. Thus,
$$|X|^{\alpha}_{t}=\int_{0}^{t}{Z^{\alpha}_{s}{\rm sgn}(X_{s})dM_{s}}+(2\alpha-1)L_{t}^{0}(|X|^{\alpha})$$
$$\hspace{4.5cm}=\int_{0}^{t}{Z^{\alpha}_{s}{\rm sgn}(X_{s})dm_{s}}+\int_{0}^{t}{Z^{\alpha}_{s}{\rm sgn}(X_{s})dv_{s}}+(2\alpha-1)L_{t}^{0}(|X|^{\alpha}).$$
One has, $\langle M,D\rangle_{t}=\langle X,D\rangle_{t}=0$. Hence, we obtain from Corollary \ref{car} that $dv_{t}$ is carried by $H\subset\{t\geq0: X_{t}=0\}$. Then,
$$\int_{0}^{t}{Z^{\alpha}_{s}{\rm sgn}(X_{s})dv_{s}}=0.$$
That is, 
$$|X|^{\alpha}_{t}=\int_{0}^{t}{Z^{\alpha}_{s}{\rm sgn}(X_{s})dm_{s}}+(2\alpha-1)L_{t}^{0}(|X|^{\alpha}).$$
$W=\left(\int_{0}^{t}{Z^{\alpha}_{s}{\rm sgn}(X_{s})dm_{s}}:t\geq0\right)$ is a $\P-$ local martingale and 
$$\langle W,W\rangle_{t}=\langle m,m\rangle_{t}=\langle M,M\rangle_{t}=\langle X,X\rangle_{t}=t.$$
Consequently, $W$ is a $\P-$ Brownian motion ensuring that $|X|^{\alpha}$ satisfies  \eqref{sbm}.
\end{proof}

\subsection{Construction of solutions for time inhomogeneous skew Brownian motion}

\begin{theorem}
Let $M$ be a $(\Qv,\P)-$ Brownian motion null on $H$ such that $\langle D,M\rangle=0$. Then, the following hold:
\begin{enumerate}
	\item $M^{\alpha}=\mathcal{Z}^{\alpha}M$ is a weak solution of \eqref{isbm}.
	\item $|M|^{\alpha}=\mathcal{Z}^{\alpha}|M|$ is a weak solution of \eqref{isbm}.
\end{enumerate}
\end{theorem}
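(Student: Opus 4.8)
The plan is to follow the same template used in the homogeneous case, but with the inhomogeneous balayage identity from the excerpt (the proposition of Bouhadou and Ouknine attached to the process $\mathcal{Z}^{\alpha}$) replacing the one attached to $Z^{\alpha}$. Concretely, for part (1) I would first apply that proposition with $Y=M$ to obtain
$$\mathcal{Z}^{\alpha}_{t}M_{t}=\int_{0}^{t}{\mathcal{Z}^{\alpha}_{s}dM_{s}}+\int_{0}^{t}{(2\alpha(s)-1)dL_{s}^{0}(\mathcal{Z}^{\alpha}M)}.$$
Writing $M=m+v$, I split $\int_{0}^{t}\mathcal{Z}^{\alpha}_{s}dM_{s}=\int_{0}^{t}\mathcal{Z}^{\alpha}_{s}dm_{s}+\int_{0}^{t}\mathcal{Z}^{\alpha}_{s}dv_{s}$. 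Since $\langle D,M\rangle=0$, Corollary \ref{car} gives that $dv_{t}$ is carried by $H$, and since $M$ is null on $H$ we have $H\subset\{t:M_{t}=0\}$, while $\mathcal{Z}^{\alpha}$ vanishes off the excursion intervals of $M$, i.e. on $\{t:M_{t}=0\}$; hence $\int_{0}^{t}\mathcal{Z}^{\alpha}_{s}dv_{s}=0$. This leaves $M^{\alpha}_{t}=\int_{0}^{t}\mathcal{Z}^{\alpha}_{s}dm_{s}+\int_{0}^{t}(2\alpha(s)-1)dL_{s}^{0}(M^{\alpha})$.

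Next I would check that $W_{t}=\int_{0}^{t}\mathcal{Z}^{\alpha}_{s}dm_{s}$ is a $\P$-Brownian motion. It is a $\P$-local martingale because $m$ is; and since the $\zeta^{i}_{n}$ take values in $\{-1,1\}$, one has $(\mathcal{Z}^{\alpha}_{s})^{2}=1$ on each excursion interval and $(\mathcal{Z}^{\alpha}_{s})^{2}=0$ on $\{t:M_{t}=0\}$, so that $\langle W,W\rangle_{t}=\int_{0}^{t}(\mathcal{Z}^{\alpha}_{s})^{2}d\langle m,m\rangle_{s}=\langle m,m\rangle_{t}-\int_{0}^{t}\mathbf 1_{\{M_{s}=0\}}d\langle m,m\rangle_{s}=\langle M,M\rangle_{t}=t$, using that the occupation density of $m$ at $0$ and hence the time $m$ spends at $0$ is negligible (or, more simply, that $\langle m,m\rangle=\langle M,M\rangle$ and the level set $\{M=0\}$ carries no $d\langle M,M\rangle$ mass). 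By L\'evy's characterization $W$ is a standard Brownian motion, so $M^{\alpha}=\mathcal{Z}^{\alpha}M$ solves \eqref{isbm} weakly with driving Brownian motion $W$ and skewness function $\alpha(\cdot)$.

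For part (2) the argument is parallel, starting from the $\mathcal{Z}^{\alpha}$-balayage identity applied to $Y=|M|$: $|M|^{\alpha}_{t}=\int_{0}^{t}\mathcal{Z}^{\alpha}_{s}d|M_{s}|+\int_{0}^{t}(2\alpha(s)-1)dL_{s}^{0}(|M|^{\alpha})$. Then Tanaka's formula $|M_{t}|=\int_{0}^{t}{\rm sgn}(M_{s})dM_{s}+L_{t}^{0}(M)$ turns the first integral into $\int_{0}^{t}\mathcal{Z}^{\alpha}_{s}{\rm sgn}(M_{s})dM_{s}+\int_{0}^{t}\mathcal{Z}^{\alpha}_{s}dL_{s}^{0}(M)$, and the local-time term vanishes because $dL^{0}(M)$ is carried by $\{M=0\}$ where $\mathcal{Z}^{\alpha}\equiv 0$. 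Splitting $M=m+v$ and again killing the $\int\mathcal{Z}^{\alpha}_{s}{\rm sgn}(M_{s})dv_{s}$ term (same reasoning: $dv$ on $H\subset\{M=0\}$, $\mathcal{Z}^{\alpha}=0$ there), one is left with $|M|^{\alpha}_{t}=\int_{0}^{t}\mathcal{Z}^{\alpha}_{s}{\rm sgn}(M_{s})dm_{s}+\int_{0}^{t}(2\alpha(s)-1)dL_{s}^{0}(|M|^{\alpha})$, and $W_{t}=\int_{0}^{t}\mathcal{Z}^{\alpha}_{s}{\rm sgn}(M_{s})dm_{s}$ has quadratic variation $\int_{0}^{t}(\mathcal{Z}^{\alpha}_{s})^{2}{\rm sgn}(M_{s})^{2}d\langle m,m\rangle_{s}=t$ by the same cancellation, hence is a Brownian motion.

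The only genuinely delicate point is bookkeeping about where the various measures live: one must be careful that $\mathcal{Z}^{\alpha}$ is defined precisely on the complement of the zero set of the semimartingale to which the balayage is applied (here $M$, resp. $|M|$, which have the same zero set), so that products of $\mathcal{Z}^{\alpha}$ with $dv$, $dL^{0}(M)$ and $\mathbf 1_{\{M=0\}}d\langle M,M\rangle$ all vanish; and that the identification of the driving noise as a Brownian motion via $\langle W,W\rangle_{t}=t$ requires exactly $\langle M,M\rangle_{t}=t$ together with $M$ having a negligible zero set for $d\langle M,M\rangle$, which is automatic since $\langle M,M\rangle_{t}=t$ is absolutely continuous. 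Everything else is a direct transcription of the homogeneous proof with $Z^{\alpha}$ replaced by $\mathcal{Z}^{\alpha}$ and $(2\alpha-1)L^{0}$ replaced by $\int_{0}^{\cdot}(2\alpha(s)-1)dL_{s}^{0}$.
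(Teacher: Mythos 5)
Your proposal is correct and follows essentially the same route as the paper: the $\mathcal{Z}^{\alpha}$ balayage identity of Bouhadou--Ouknine, elimination of the finite-variation terms via Corollary \ref{car} together with the fact that $\mathcal{Z}^{\alpha}$ vanishes on $\{M=0\}\supset H$, and L\'evy's characterization of the driving noise. The only (harmless) deviations are that in part (2) you split $dM=dm+dv$ directly rather than invoking Corollary \ref{car4} to identify $\int_{0}^{\cdot}{\rm sgn}(M_{s})\,dM_{s}$ as a $\P$-Brownian motion, and that you treat the computation of $\langle W,W\rangle$ more carefully than the paper does, correctly noting that $(\mathcal{Z}^{\alpha})^{2}$ equals $0$ rather than $1$ on $\{M=0\}$ but that this set carries no $d\langle M,M\rangle$-mass.
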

\begin{proof}
$(1)$ We obtain by applying Proposition 2.4 of \cite{siam} that
$$M^{\alpha}_{t}=\int_{0}^{t}{\mathcal{Z}^{\alpha}_{s}dM_{s}}+\int_{0}^{t}{(2\alpha(s)-1)dL_{s}^{0}(M^{\alpha})}.$$
That is,
$$M^{\alpha}_{t}=\int_{0}^{t}{\mathcal{Z}^{\alpha}_{s}dm_{s}}+\int_{0}^{t}{\mathcal{Z}^{\alpha}_{s}dv_{s}}+\int_{0}^{t}{(2\alpha(s)-1)dL_{s}^{0}(M^{\alpha})}.$$
Since $\langle D,M\rangle=0$, it follows from Corollary \ref{car} that $dv_{t}$ is carried by $H$. Hence,
$$\int_{0}^{t}{\mathcal{Z}^{\alpha}_{s}dv_{s}}=0$$
since $H\subset\{t\geq0: M_{t}=0\}$ and the process $\mathcal{Z}^{\alpha}$ is defined on the complementary of the set $\{t\geq0: M_{t}=0\}$. Then,
$$M^{\alpha}_{t}=\int_{0}^{t}{\mathcal{Z}^{\alpha}_{s}dm_{s}}+\int_{0}^{t}{(2\alpha(s)-1)dL_{s}^{0}(M^{\alpha})}.$$
It is obvious to see that $W=\left(\int_{0}^{t}{\mathcal{Z}^{\alpha}_{s}dm_{s}:t\geq0}\right)$ is a $\P-$ local martingale since $m$ is too. Moreover,
$$\langle Y,Y\rangle_{t}=\int_{0}^{t}{(\mathcal{Z}^{\alpha}_{s})^{2}d\langle m,m\rangle_{s}}=\langle m,m\rangle_{t}$$
since $\mathcal{Z}^{\alpha}_{s}\in\{-1,1\}$. But, $\langle m,m\rangle_{t}=\langle M,M\rangle_{t}=t$. Consequently, $Y$ is a $\P-$ Brownian motion, ensuring that $M^{\alpha}$ satisfies \eqref{sbm}.
$$ $$
(2) We have from Proposition 2.4 of \cite{siam} that
$$|M|^{\alpha}_{t}=\int_{0}^{t}{\mathcal{Z}^{\alpha}_{s}d|M_{s}|}+\int_{0}^{t}{(2\alpha(s)-1)dL_{s}^{0}(|M|^{\alpha})}.$$
That is,
$$|M|^{\alpha}_{t}=\int_{0}^{t}{\mathcal{Z}^{\alpha}_{s}{\rm sgn(M_{s})}dM_{s}}+\int_{0}^{t}{\mathcal{Z}^{\alpha}_{s}dL^{0}_{s}(M)}+\int_{0}^{t}{(2\alpha(s)-1)dL_{s}^{0}(|M|^{\alpha})}.$$
But,
$$\int_{0}^{t}{\mathcal{Z}^{\alpha}_{s}dL^{0}_{s}(M)}=0$$
since $dL_{t}^{0}(M)$ is carried by $\{t\geq0:M_{t}=0\}$ and $\mathcal{Z}^{\alpha}_{s}$ is defined on the complementary of $\{t\geq0:M_{t}=0\}$. So, that entails that
$$|M|^{\alpha}_{t}=\int_{0}^{t}{\mathcal{Z}^{\alpha}_{s}dW_{s}}+\int_{0}^{t}{(2\alpha(s)-1)dL_{s}^{0}(|M|^{\alpha})}$$
where, $W_{t}=\int_{0}^{t}{{\rm sgn}(M_{s})dM_{s}}$. But, we can see from Corollary \ref{car4} that $W$ is a $\P-$ Brownian motion. That implies that $\left(\int_{0}^{t}{\mathcal{Z}^{\alpha}_{s}dW_{s}}:t\geq0\right)$ is a $\P-$ Brownian motion, ensuring that $|M|^{\alpha}$ satisfies \eqref{sbm}.
\end{proof}

\begin{rem}
Since every $\P$- Brownian motion $B$ null on $H$ such that $\langle D,B\rangle=0$ is also a $(\Qv,\P)$- Brownian motion. Hence, the solution 2. of the above theorem coincides with the solution of \cite{siam}.
\end{rem}

In what follows, we construct solutions of the time inhomogeneous skew Brownian motion equation with stochastic processes of the class  $\Sigma(H)$.

\begin{theorem}
Let $X=M+V$ be a continuous stochastic process of class $\Sigma(H)$ null on $H$ such that $\langle X,X\rangle_{t}=t$ and  $\langle X,D\rangle_{t}=0$. Then, the following hold:
\begin{enumerate}
	\item $X^{\alpha}=\mathcal{Z}^{\alpha}X$ is a weak solution of \eqref{isbm}.
	\item $|X|^{\alpha}=\mathcal{Z}^{\alpha}|X|$ is a weak solution of \eqref{isbm}.
\end{enumerate}
\end{theorem}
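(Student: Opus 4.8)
The plan is to mimic the structure of the two preceding theorems, replacing $Z^{\alpha}$ by $\mathcal{Z}^{\alpha}$ and invoking Proposition 2.4 of \cite{siam} (the progressive balayage identity for $\mathcal{Z}^{\alpha}$) instead of Proposition 2.2. First I would treat part (1): write $X=M+V\in\Sigma(H)$ with $M=m+v$ the $(\Qv,\P)$-local martingale part and $v$ its finite variation component. Applying Proposition 2.4 of \cite{siam} to the semimartingale $X$ gives
$$X^{\alpha}_{t}=\int_{0}^{t}{\mathcal{Z}^{\alpha}_{s}dX_{s}}+\int_{0}^{t}{(2\alpha(s)-1)dL_{s}^{0}(X^{\alpha})}=\int_{0}^{t}{\mathcal{Z}^{\alpha}_{s}dM_{s}}+\int_{0}^{t}{\mathcal{Z}^{\alpha}_{s}dV_{s}}+\int_{0}^{t}{(2\alpha(s)-1)dL_{s}^{0}(X^{\alpha})}.$$
Since $X$ vanishes on $H$, the set $\{t: X_tD_t=0\}$ equals $\{t: X_t=0\}$, and $dV_t$ is carried by it; because $\mathcal{Z}^{\alpha}$ is defined precisely on the complement of $\{t: X_t=0\}$, the term $\int_{0}^{t}{\mathcal{Z}^{\alpha}_{s}dV_{s}}$ vanishes. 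Then I would split $M=m+v$ and, using $\langle X,D\rangle=0$, invoke Corollary \ref{car} to conclude $dv_t$ is carried by $H\subset\{t: X_t=0\}$, so $\int_{0}^{t}{\mathcal{Z}^{\alpha}_{s}dv_{s}}=0$ as well. This leaves $X^{\alpha}_{t}=\int_{0}^{t}{\mathcal{Z}^{\alpha}_{s}dm_{s}}+\int_{0}^{t}{(2\alpha(s)-1)dL_{s}^{0}(X^{\alpha})}$, and setting $Y_t=\int_{0}^{t}{\mathcal{Z}^{\alpha}_{s}dm_{s}}$, I would observe $Y$ is a $\P$-local martingale with $\langle Y,Y\rangle_{t}=\int_{0}^{t}{(\mathcal{Z}^{\alpha}_{s})^{2}d\langle m,m\rangle_{s}}=\langle m,m\rangle_{t}=\langle M,M\rangle_{t}=\langle X,X\rangle_{t}=t$ (using $\mathcal{Z}^{\alpha}_s\in\{-1,1\}$ off the zero set, and that the local time part of $X$ does not contribute to $\langle X,X\rangle$), hence $Y$ is a $\P$-Brownian motion by L\'evy's theorem, so $X^{\alpha}$ solves \eqref{isbm}.

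For part (2), I would proceed analogously starting from $|X|^{\alpha}_{t}=\int_{0}^{t}{\mathcal{Z}^{\alpha}_{s}d|X_{s}|}+\int_{0}^{t}{(2\alpha(s)-1)dL_{s}^{0}(|X|^{\alpha})}$. Using It\^o--Tanaka, $d|X_s|={\rm sgn}(X_s)dX_s+dL^0_s(X)$; the term $\int_{0}^{t}{\mathcal{Z}^{\alpha}_{s}dL^0_s(X)}$ vanishes since $dL^0_s(X)$ is carried by $\{t: X_t=0\}$ where $\mathcal{Z}^{\alpha}$ is not supported. Then I would write $\int_{0}^{t}{\mathcal{Z}^{\alpha}_{s}{\rm sgn}(X_s)dX_s}=\int_{0}^{t}{\mathcal{Z}^{\alpha}_{s}{\rm sgn}(X_s)dM_s}+\int_{0}^{t}{\mathcal{Z}^{\alpha}_{s}{\rm sgn}(X_s)dV_s}$, kill the $dV$ integral as before, split $M=m+v$ and kill the $dv$ integral via Corollary \ref{car} and $\langle X,D\rangle=0$, arriving at $|X|^{\alpha}_{t}=\int_{0}^{t}{\mathcal{Z}^{\alpha}_{s}{\rm sgn}(X_s)dm_s}+\int_{0}^{t}{(2\alpha(s)-1)dL_{s}^{0}(|X|^{\alpha})}$. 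Setting $W_t=\int_{0}^{t}{\mathcal{Z}^{\alpha}_{s}{\rm sgn}(X_s)dm_s}$, the integrand has modulus $1$ off the zero set, so $\langle W,W\rangle_t=\langle m,m\rangle_t=t$ and $W$ is a $\P$-Brownian motion; hence $|X|^{\alpha}$ solves \eqref{isbm}.

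The main obstacle I anticipate is purely a measure-theoretic bookkeeping point rather than a deep one: one must be careful that the products $\mathcal{Z}^{\alpha}_s\,dV_s$ and $\mathcal{Z}^{\alpha}_s\,dv_s$ genuinely vanish. The subtlety is that $\mathcal{Z}^{\alpha}$ is only defined (and bounded by $1$) on $\bigcup_n\,]g_n,d_n[$, the complement of the zero set of $X$, whereas $dV$ and $dv$ live on the closure $\{t: X_t=0\}$; one should argue that the integrals over the (Lebesgue- or variation-) null intersection vanish, exactly as in the corresponding steps of the $Z^{\alpha}$-theorems. A second, lesser point is justifying that $L^0(X^{\alpha})$ and $L^0(|X|^{\alpha})$ are well defined continuous semimartingale local times for the products $\mathcal{Z}^{\alpha}X$ and $\mathcal{Z}^{\alpha}|X|$; this is precisely the content of Proposition 2.4 of \cite{siam}, so it may simply be cited. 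Everything else is a routine transcription of the homogeneous-case arguments with $\int_0^t(2\alpha(s)-1)\,dL^0_s(\cdot)$ in place of $(2\alpha-1)L^0_t(\cdot)$.
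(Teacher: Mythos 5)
Your proposal follows essentially the same route as the paper's own proof: apply the progressive balayage identity for $\mathcal{Z}^{\alpha}$, kill the $dV$ integral because $X$ vanishes on $H$ so $dV_t$ is carried by $\{t:X_t=0\}$ where $\mathcal{Z}^{\alpha}$ is not supported, kill the $dv$ integral via Corollary \ref{car} and $\langle X,D\rangle=0$, and identify the remaining stochastic integral as a $\P$-Brownian motion through its quadratic variation; part (2) is handled identically after It\^o--Tanaka. The only differences are cosmetic (you cite Proposition 2.4 of \cite{siam} where the paper writes 2.2, and you flag the measure-theoretic bookkeeping more explicitly), so the argument matches the paper's.
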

\begin{proof}
$(1)$ We have from Proposition 2.2 of \cite{siam} that 
$$X^{\alpha}_{t}=\int_{0}^{t}{\mathcal{Z}^{\alpha}_{s}dX_{s}}+\int_{0}^{t}{(2\alpha(s)-1)dL_{s}^{0}(X^{\alpha})}.$$
That is,
$$X^{\alpha}_{t}=\int_{0}^{t}{\mathcal{Z}^{\alpha}_{s}dM_{s}}+\int_{0}^{t}{\mathcal{Z}^{\alpha}_{s}dV_{s}}+\int_{0}^{t}{(2\alpha(s)-1)dL_{s}^{0}(X^{\alpha})}.$$
We know that $dV_{t}$ is carried by $\{t\geq0: X_{t}D_{t}=0\}$. But, $\{t\geq0: X_{t}D_{t}=0\}=\{t\geq0: X_{t}=0\}$ since $X$ vanishes on $H$. Hence, it follows that
$$X^{\alpha}_{t}=\int_{0}^{t}{\mathcal{Z}^{\alpha}_{s}dM_{s}}+\int_{0}^{t}{(2\alpha(s)-1)dL_{s}^{0}(X^{\alpha})}.$$
Moreover, the $(\Qv,\P)-$ local martingale $M=m+v$ satisfies $\langle M,D\rangle_{t}=\langle X,D\rangle_{t}=0$. Then, it follows from Corollary \ref{car} that $dv_{t}$ is carried by $H\subset \{t\geq0: X_{t}=0\}$. Therefore,
$$ \int_{0}^{t}{\mathcal{Z}^{\alpha}_{s}dv_{s}}=0.$$
Thus,
$$X^{\alpha}_{t}=\int_{0}^{t}{\mathcal{Z}^{\alpha}_{s}dm_{s}}+\int_{0}^{t}{(2\alpha(s)-1)dL_{s}^{0}(X^{\alpha})}.$$
It is obvious to see that $Y=\left(\int_{0}^{t}{\mathcal{Z}^{\alpha}_{s}dm_{s}}: t\geq0\right)$ is a $\P-$ local martingale. Furthermore, 
$$\langle Y,Y\rangle_{t}=\langle m,m\rangle_{t}=\langle M,M\rangle_{t}=\langle X,X\rangle_{t}=t.$$
Consequently, $Y$ is a $\P-$ Brownian motion ensuring that $X^{\alpha}$ satisfies  \eqref{sbm}.
$$ $$
$(2)$ One has, 
$$|X|^{\alpha}_{t}=\int_{0}^{t}{\mathcal{Z}^{\alpha}_{s}d|X_{s}|}+\int_{0}^{t}{(2\alpha(s)-1)dL_{s}^{0}(|X|^{\alpha})}$$
$$\hspace{5cm}=\int_{0}^{t}{\mathcal{Z}^{\alpha}_{s}{\rm sgn}(X_{s})dX_{s}}+\int_{0}^{t}{\mathcal{Z}^{\alpha}_{s}dL^{0}_{s}(X)}+\int_{0}^{t}{(2\alpha(s)-1)dL_{s}^{0}(|X|^{\alpha})}.$$
But,
$$\int_{0}^{t}{\mathcal{Z}^{\alpha}_{s}dL^{0}_{s}(X)}=0$$
since $dL_{t}^{0}(X)$ is carried by $\{t\geq0: X_{t}=0\}$ and $\mathcal{Z}^{\alpha}$ is defined on the complementary of $\{t\geq0: X_{t}=0\}$. Hence,
$$|X|^{\alpha}_{t}=\int_{0}^{t}{\mathcal{Z}^{\alpha}_{s}{\rm sgn}(X_{s})dX_{s}}+\int_{0}^{t}{(2\alpha(s)-1)dL_{s}^{0}(|X|^{\alpha})}$$
That implies,
$$|X|^{\alpha}_{t}=\int_{0}^{t}{\mathcal{Z}^{\alpha}_{s}{\rm sgn}(X_{s})dM_{s}}+\int_{0}^{t}{\mathcal{Z}^{\alpha}_{s}{\rm sgn}(X_{s})dV_{s}}+\int_{0}^{t}{(2\alpha(s)-1)dL_{s}^{0}(|X|^{\alpha})}.$$
We have 
$$\int_{0}^{t}{\mathcal{Z}^{\alpha}_{s}{\rm sgn}(X_{s})dV_{s}}=0$$
because $dV_{t}$ is carried by $\{t\geq0: X_{t}=0\}$. Thus,
$$|X|^{\alpha}_{t}=\int_{0}^{t}{\mathcal{Z}^{\alpha}_{s}{\rm sgn}(X_{s})dM_{s}}+\int_{0}^{t}{(2\alpha(s)-1)dL_{s}^{0}(|X|^{\alpha})}$$
$$\hspace{4.5cm}=\int_{0}^{t}{\mathcal{Z}^{\alpha}_{s}{\rm sgn}(X_{s})dm_{s}}+\int_{0}^{t}{\mathcal{Z}^{\alpha}_{s}{\rm sgn}(X_{s})dv_{s}}+\int_{0}^{t}{(2\alpha(s)-1)dL_{s}^{0}(|X|^{\alpha})}.$$
One has, $\langle M,D\rangle_{t}=\langle X,D\rangle_{t}=0$. Hence, we obtain from Corollary \ref{car} that $dv_{t}$ is carried by $H\subset\{t\geq0: X_{t}=0\}$. Then,
$$\int_{0}^{t}{\mathcal{Z}^{\alpha}_{s}{\rm sgn}(X_{s})dv_{s}}=0.$$
That is, 
$$|X|^{\alpha}_{t}=\int_{0}^{t}{\mathcal{Z}^{\alpha}_{s}{\rm sgn}(X_{s})dm_{s}}+\int_{0}^{t}{(2\alpha(s)-1)dL_{s}^{0}(|X|^{\alpha})}.$$
$W=\left(\int_{0}^{t}{\mathcal{Z}^{\alpha}_{s}{\rm sgn}(X_{s})dm_{s}}:t\geq0\right)$ is a $\P-$ local martingale and 
$$\langle W,W\rangle_{t}=\langle m,m\rangle_{t}=\langle M,M\rangle_{t}=\langle X,X\rangle_{t}=t.$$
Consequently, $W$ is a $\P-$ Brownian motion ensuring that $|X|^{\alpha}$ satisfies  \eqref{sbm}.
\end{proof}

{\color{myaqua}

\end{document}